\renewcommand\div{\text{div}_v}
\newcommand\bR{\mathbb{R}}
\newcommand\bM{\mathbb{M}}
\newcommand\bS{\mathbb{S}}
\newcommand\cP{\mathcal{P}}
\newcommand\cR{\mathcal{R}}
\newcommand\tQ{\widetilde Q}
\newcommand\cM{\mathcal{M}}
\newcommand\cI{\mathcal{I}}
\newcommand\bH{\mathbb{H}}
\newcommand\sfv{\mathsf{v}}
\newcommand{\mysection}[1]{\section{#1}
 \setcounter{equation}{0}}
\numberwithin{equation}{section}
\numberwithin{equation}{section}
\newcommand\cbrk{\text{$]$\kern-.15em$]$}}
\newcommand\opar{
\text{\,\raise.2ex\hbox{${\scriptstyle |}$}\kern-.34em$($}}
\theoremstyle{plain}
\newtheorem{theorem}{Theorem}[section]
\newtheorem{lemma}[theorem]{Lemma}
\newtheorem{corollary}[theorem]{Corollary}
\newtheorem{proposition}[theorem]{Proposition}
\theoremstyle{definition}
\newtheorem{definition}[theorem]{Definition}
\newtheorem{assumption}[theorem]{Assumption}
\theoremstyle{remark}
\newtheorem{remark}[theorem]{Remark}
\newcommand{\nlimsup}{\operatornamewithlimits{\overline{lim}}}
\begin{document}

\title[$L_p$ estimates for kinetic Kolmogorov-Fokker-Planck equations]{Global $L_p$ estimates for kinetic Kolmogorov-Fokker-Planck equations in divergence form}

\author[H. Dong]{Hongjie Dong}
\address[H. Dong]{Division of Applied Mathematics, Brown University, 182 George Street, Providence, RI 02912, USA}
\email{Hongjie\_Dong@brown.edu }
\thanks{H. Dong was partially supported by the Simons Foundation, grant no. 709545, a Simons fellowship, grant no. 007638, the NSF under agreement DMS-2055244, and the Charles Simonyi Endowment at the Institute for Advanced Study.}

\author[T. Yastrzhembskiy]{Timur Yastrzhembskiy}
\address[T. Yastrzhembskiy]{Division of Applied Mathematics, Brown University, 182 George Street, Providence, RI 02912, USA}
\email{Timur\_Yastrzhembskiy@brown.edu}

\subjclass[2010]{35K70, 35H10, 35B45, 34A12}
\keywords{Kinetic Kolmogorov-Fokker-Planck equations, mixed-norm Sobolev estimates, Muckenhoupt weights, vanishing mean oscillation coefficients.}

\begin{abstract}
We  present  a priori estimates and unique solvability results in the mixed-norm Lebesgue spaces for kinetic Kolmogorov-Fokker-Planck (KFP) equation in  divergence form.
The leading coefficients are bounded  uniformly nondegenerate  with respect to the velocity variable $v$ and satisfy a  vanishing mean oscillation  (VMO) type condition. We consider the $L_2$ case separately and treat more general equations which include the relativistic KFP equation.
This work is a continuation of \cite{DY_21}, where the present authors studied the KFP equations in  nondivergence form.
\end{abstract}

\maketitle

\tableofcontents

\section{Introduction and the main results}

For any integer $d \ge 1$,  let  $\bR^d$ be a Euclidean space of points $(x_1, \ldots, x_d)$,
and for $T \in (-\infty, \infty]$,
 we set
$\bR^d_T = (-\infty, T) \times \bR^{d-1}$.
 Throughout the paper,
  $z$  is the triple $(t, x, v)$,
 where $t \in \bR$, and  $x, v \in \bR^d$.

The goal of this paper is to prove the a priori estimates and unique solvability results for the kinetic KFP equation
in   divergence form given by
\begin{equation}
                \label{1.1}
    \partial_t u - v \cdot D_x u - D_{v_i} (a^{i j} (z) D_{v_j} u) + \div (\overline{b} u)
    + b \cdot D_{v} u + c u + \lambda u
 = \div \vec f + g.
\end{equation}

\subsection{Notation and assumptions}
For $x_0\in \bR^d$, $z_0\in \bR^{1+2d}$, and $r,R>0$, we introduce
\begin{align*}
  &  	B_r (x_0) = \{\xi \in \bR^d: |\xi-x_0| < r\},\\
&
	Q_{r, R} (z_0)
	=  \{z: -r^2<t - t_0<0, |v-v_0| < r, |x - x_0 + (t - t_0) v_0|^{1/3} < R\},\\
 &
	\tQ_{r, R} (z_0)
	=  \{z: |t - t_0|^{1/2} < r, |v-v_0| < r, |x - x_0 + (t - t_0) v_0|^{1/3} < R\},\\
 &
        Q_r (z_0) = Q_{r, r} (z_0), \quad \tQ_r (z_0) = \tQ_{r, r} (z_0) \quad Q_r  = Q_r (0), \quad \tQ_r = \tQ_r (0).
  \end{align*}
For $f \in L_{1, \text{loc}} (\bR^d)$ and a Lebesgue measurable set, we denote by $(f)_A$ or $\fint_{A} f \, dx$ the average of $f$ over $A$.
Furthermore, for $c > 0$, $T \in (-\infty, \infty]$, and $f \in L_{1, \text{loc}} (\bR^{1+2d}_T)$,  we introduce variants of maximal and sharp functions
\begin{equation}
			\label{eq3.38}
 \begin{aligned}
    &  	\bM_{c, T} f (z_0)
	= \sup_{r > 0, z_1 \in  \overline{\bR^{1+2d}_T}: z_0 \in  Q_{r, c r} (z_1) } \fint_{Q_{r, c r} (z_1) } |f (z)| \, dz,
	\quad
		\cM_T f  := \bM_{1, T} f,\\
	&f^{\#}_{ T} (z_0) = \sup_{r > 0, z_1 \in \overline{\bR^{1+2d}_T}: z_0 \in  Q_r (z_1) } \fint_{Q_{r} (z_1) } |f (z) -  (f)_{Q_{r} (z_1) }| \, dz.
 \end{aligned}
\end{equation}

We impose the following assumptions on the coefficients.
\begin{assumption}

                \label{assumption 2.1}
The coefficients $a (z) = (a^{ij} (z), i, j = 1, \ldots, d)$ are bounded measurable functions  such that  for some $\delta \in (0, 1)$,
$$
    \delta |\xi|^2 \le a^{i j} (z) \xi_i \xi_j,\quad |a^{ij}(z)|\le \delta^{-1}, \quad \forall \xi \in \bR^d,\, z \in \bR^{1+2d}.
$$
\end{assumption}

The next assumption can be viewed as a kinetic $VMO_{x, v}$ assumption with respect to
\begin{equation}
                \label{1.2}
        \rho (z, z_0) =   \max\{ |t-t_0|^{1/2},  |x - x_0 + (t - t_0) v_0|^{1/3}, |v-v_0|\},
\end{equation}
which satisfies  all the  properties of the quasi-metric except the symmetry.
It is analogous to the $VMO_x$ condition from the theory of nondegenerate parabolic equations with rough coefficients (see Chapter 6 of \cite{Kr_08}).
\begin{assumption}$(\gamma_0)$
                  \label{assumption 2.2}
There exists $R_0 \in (0,1)$ such that for any $z_0$ and $r \in (0, R_0]$,
$$
     \text{osc}_{x, v} (a, Q_r (z_0)) \le \gamma_0,
$$
where
\begin{align*}
  & \text{osc}_{x, v} (a, Q_r (z_0))\\
&= \fint_{(t_0 - r^2, t_0)} \fint_{D_r (z_0, t) \times D_r (z_0, t)}
   |a (t, x_1, v_1) - a (t, x_2, v_2)| \, dx_1dv_1 dx_2dv_2 \, dt,
\end{align*}
  and
  $$
    D_r (z_0, t) =  \{(x, v): |x  - x_0 + (t-t_0) v_0|^{1/3} < r, |v-v_0| < r  \}.
$$
\end{assumption}

\begin{remark}
In this remark, we give examples when
Assumption \ref{assumption 2.2} is satisfied.
Throughout the remark,  $\omega: [0, \infty) \to [0, \infty)$
 is an increasing function
such that $\omega (0+) = 0$.

\textit{Anisotropic $VMO_{x, v}$ condition:}
\begin{equation}
			\label{eq2.3.1}
\begin{aligned}
	&
\text{osc}_{x, v}' (a, r)    := \sup_{t, x, v} r^{-8d}\\
& \times \int_{  x_1, x_2 \in B_{r^3} (x) } \int_{v_1, v_2 \in B_r (v) } |a (t, x_1, v_1) - a (t, x_2, v_2)| \, dx_1dx_2\, dv_1dv_2 \le \omega (r).
\end{aligned}
\end{equation}
Since $\text{osc}_{x, v} (a, Q_r (z_0))    \le \text{osc}_{x, v}' (a, r)$, if anisotropic $VMO_{x, v}$ condition holds, then for any $\gamma_0 \in (0, 1)$,
   Assumption       \ref{assumption 2.2} $(\gamma_0)$ holds.

\textit{Continuity with respect to the anisotropic distance} $\text{dist} ((x, v), (x', v')) := |x-x'|^{1/3} +|v-v'|$:
for  any $t,  x, x', v, v'$,
$$
	|a (t, x,v) - a (t, x',v')| \le \omega \big(\text{dist} ((x, v), (x', v'))\big).
$$
Note that if this condition holds, then \eqref{eq2.3.1} is true, and, therefore, for any $\gamma_0 \in (0, 1)$,   Assumption \ref{assumption 2.2} $(\gamma_0)$ is satisfied.
\end{remark}

\begin{assumption}
                  \label{assumption 2.3}
The functions $b  = (b^i, i = 1, \ldots, d)$, $\overline{ b} =  (\overline{ b}^i, i = 1, \ldots, d)$, and $c$ are bounded measurable  on $\bR^{1+2d}$,
 and they satisfy  the condition
$$
	|b| + |\overline{ b}| + |c| \le L
$$
for some constant $L > 0$.
\end{assumption}

\subsection{Function spaces.}
Below we define the mixed-norm Lebesgue and Sobolev spaces.
In all these definitions, $G \subset \bR^{1+2d}$ is an open set, and $p, r_1, \ldots, r_d, q > 1$ are numbers.
\begin{definition}
We say that $w$ is a weight  on $\bR^d$
if $w$ is a locally integrable function that is
positive almost everywhere. Let $w_i, i = 0, 1, \ldots, d,$ be weights on $\bR$.
By $L_{p,  r_1, \ldots, r_d, q}(G, w)$ with
\begin{equation}
			\label{eq2.2}
w=w(t,v)=w_0(t) w_1(v_1)\cdots w_d(v_d),
\end{equation}
we denote the space of all Lebesgue measurable functions on $\bR^{1+2d}$ such that
\begin{align*}
   &\|f\|_{ L_{p,  r_1, \ldots, r_d, q} ( G, w)}\\
   & =
    \big|\int_{\bR}
    \big| \ldots \big|\int_{\bR} \big|\int_{\bR^d} |f|^p (z) 1_{G}(z) \,  dx\big|^{\frac {r_1} p} \, w_1(v_1)  dv_1\big|^{\frac {r_2}{r_1}} \ldots w_d (v_d) dv_d\big|^{\frac q {r_d}}\, w_0(t) dt
    \big|^{\frac 1 q},
\end{align*}
and for $\alpha \in (-1, p-1)$, we set $L_{p; r_1, \ldots, r_d} (\bR^{1+2d}_T,  |x|^{\alpha} \prod_{i = 1}^d w_i (v_i))$
to be the weighted mixed-norm Lebesgue space with the norm
\begin{equation}
\begin{aligned}
			\label{eq2.2.1}
&\|f\|_{ L_{p;  r_1, \ldots, r_d} (G, |x|^{\alpha} \prod_{i = 1}^d w_i (v_i) )} \\
&
=   \big|\int_{\bR} \ldots \big|\int_{\bR} \big|\int_{\bR^{d+1}} |f|^p (z) 1_{G}(z) |x|^{\alpha} \,  dxdt\big|^{\frac {r_1} p} \, w_1(v_1)  dv_1\big|^{\frac {r_2}{r_1}}
\ldots w_d (v_d) dv_d\big|^{\frac{1}{r_d}}.
\end{aligned}
\end{equation}
Furthermore, for a vector-valued function $\vec f  = (f_1, \ldots, f_d)$, we write
$$
\vec f \in L_{p, r_1, \ldots, r_d, q} (G, w)  \,  \bigg(\text{or } L_{p;  r_1, \ldots, r_d} (G, |x|^{\alpha} \prod_{i = 1}^d w_i (v_i))\bigg)
$$
if each component $f_i$ is in $L_{p, r_1, \ldots, r_d, q} (G, w)$ $\big(\text{or }L_{p;  r_1, \ldots, r_d} (G, |x|^{\alpha} \prod_{i = 1}^d w_i (v_i))\big)$.
\end{definition}

Throughout this  paper, $w  =  w (t, v)$ is a weight on $\bR^{1+d}$.

\begin{definition}
By $\bH^{-1}_{p, r_1, \ldots, r_d, q} (G, w)$ we denote the set of all functions $u$ on $G$ such that there exist $\vec f, g \in L_{p, r_1, \ldots, r_d, q} (G, w)$ satisfying
\begin{equation}
			\label{eq2.4}
    u = \div \vec f + g.
\end{equation}
The norm is given by
$$
   \|u\|_{  \bH^{-1}_{p, r_1, \ldots, r_d, q} (G, w)  }  = \inf  \big(\|\vec f\|_{L_{p, r_1, \ldots, r_d, q} (G, w)} + \|g\|_{L_{p, r_1, \ldots, r_d, q} (G, w)}\big),
$$
where the infimum is taken over all $\vec f$ and $g$ satisfying \eqref{eq2.4}.
\end{definition}

Here is the definition of the kinetic (ultraparabolic) Sobolev spaces.
The first one is designed to treat the divergence form equations, whereas the second one is used to work with the KFP equations in nondivergence form.

\begin{definition}
By $\mathbb{S}_{p, r_1, \ldots, r_d, q} (G, w)$
we denote  the Banach space
of all functions $u$
such that $u,  D_v u \in L_{p, r_1, \ldots, r_d, q} (G, w)$, and
$(\partial_t  -  v \cdot  D_x)u \in \mathbb{H}^{-1}_{p, r_1, \ldots, r_d, q} (G, w)$. The norm is defined as follows:
$$
    \|u\|_{\mathbb{S}_{p, r_1, \ldots, r_d, q} (G, w)} = \|u\| + \| D_v u\|
    + \|\partial_t u - v \cdot  D_x u\|_{ \mathbb{H}^{-1}_{p, r_1, \ldots, r_d, q} (G, w)},
$$
where $\|\cdot\| = \|\cdot\|_{ L_{p, r_1, \ldots, r_d, q} (G, w)}$.
\end{definition}

\begin{definition}
Let $S_{p, r_1, \ldots, r_d, q} (G, w)$
be the  Banach space
of  functions $u$
such that $u$,  $D_v u$, $D^2_v u$, $(\partial_t  -  v \cdot D_x)u   \in L_{p, r_1, \ldots, r_d, q} (G, w)$, and  the norm is given by
$$
    \|u\|_{ S_{p, r_1, \ldots, r_d, q} (G, w)} = \||u| +  |D_v u| + |D^2_v u| + |\partial_t u - v \cdot D_x u|\|_{ L_{p, r_1, \ldots, r_d, q} (G, w)}.
$$
\end{definition}

If $w  \equiv 1$ or $p = q = r_1 = r_2 = \ldots =  r_d$,
we drop $w$ or $q, r_1, \ldots, r_d$ from the above notation.

We define the spaces
\begin{align*}
&
\bH^{-1}_{p;  r_1, \ldots, r_d} (G, |x|^{\alpha} \prod_{i = 1}^d w_i (v_i)),  \quad \mathbb{S}_{p;  r_1, \ldots, r_d} (G, |x|^{\alpha} \prod_{i = 1}^d w_i (v_i)),\\
& \text{and} \, \, S_{p;  r_1, \ldots, r_d} (G, |x|^{\alpha} \prod_{i = 1}^d w_i (v_i))
\end{align*}
  in the same way.

By $\mathcal{S} (\bR^d)$ we denote the set of Schwartz functions, and by $C^{\infty}_0 (\bR^{d})$ - the set of all smooth compactly supported functions on $\bR^d$.
\begin{definition}
            \label{definition 1.1}
We write $u \in C_0 (\bR^d)$ if $u$ is a continuous function vanishing at infinity.
For $k \in \{1, 2, \ldots\},$ by $C^k_0 (\bR^d)$, we mean the subspace of $C_0 (\bR^d)$ of functions such that $D^j u \in C_0 (\bR^d), j = 1, \ldots, k$.
\end{definition}

\subsection{Main results}
				
\subsubsection{$L_p$ theory for KFP equations with  VMO coefficients}
Denote
\begin{equation}
                \label{1.1.0}
    \cP = \partial_t  - v \cdot D_x + D_{v_i} (a^{i j} D_{v_j}).
\end{equation}

\begin{definition}
                \label{definition 2.1}
For  $T \in  (-\infty,  \infty]$, we  say   that $u  \in \bS_{p, r_1, \ldots, r_d, q} (\bR^{1+2d}_T, w)$ is a solution to Eq. \eqref{1.1}
if the identity   \eqref{1.1}  holds in the space $\bH^{-1}_{p, r_1, \ldots, r_d, q} (\bR^{1+2d}_T, w)$.
Furthermore, for $-\infty<S<T\le \infty$,
$$
  u \in \bS_{p, r_1, \ldots, r_d, q} ((S, T) \times \bR^{2d}, w)
$$
is a solution to the Cauchy problem
\begin{equation}
                \label{2.1}
     \cP u + \div (\overline{b}  u) + b^i D_{v_i} u + c  u = \div \vec f + g, \quad u (S, \cdot) = 0
\end{equation}
if there exists $\widetilde u \in \bS_{p, r_1, \ldots, r_d, q} (\bR^{1+2d}_T, w)$ such that $\widetilde u = u$
on $(S, T) \times \bR^{2d}$,
$\widetilde u = 0$ on $(-\infty, S)  \times \bR^{2d}$, and the equality
$$
    \cP  u + \div (\overline{ b} u)  + b^i D_{v_i} u + c u  = \div \vec f + g
$$
holds in $\bH^{-1}_{p, r_1, \ldots, r_d, q} ((S, T) \times \bR^{2d}, w)$.
Similarly, we define a solution in the space $\bS_{p; r_1, \ldots, r_d} ((S, T) \times \bR^{2d}, |x|^{\alpha} \prod_{i = 1}^d w_i (v_i))$.
\end{definition}

\begin{definition}[$A_p$-weight]
For a number $p > 1$, we write $w \in A_p (\bR^d)$
if $w$ is a weight on $\bR^d$
such that
\begin{equation}
\begin{aligned}
		\label{eq1.12}
    [w]_{A_p (\bR^d)}  : = & \sup_{ x_0 \in \bR^d, r > 0}
    \Big(\fint_{    B_r (x_0) } w (x) \, dx\Big) \\
&\times \Big(\fint_{ B_r (x_0) } w^{-1/(p-1)}(x) \, dx\Big)^{p-1} < \infty.
\end{aligned}
\end{equation}
\end{definition}

\begin{remark}
                \label{rem2.1}
An example of an $A_p (\bR^d)$-weight is $w (x) = |x|^{\alpha}, \alpha \in (-d, d(p-1))$ (see, for instance,   Example 7.1.7 in \cite{G_14}).
\end{remark}

\begin{definition}
For $s \in \bR$, the fractional Laplacian $(-\Delta_x)^s$ is defined as a Fourier multiplier with the symbol $|\xi|^{2s}$.
Furthermore, when $s \in (0, 1/2)$,  for any Lipschitz function $u \in \cup_{p \in [1, \infty]} L_p (\bR^d)$,
the following pointwise formula is valid:
\begin{equation}
            \label{eq1.20}
	(-\Delta_x)^s u (x) = c_{d, s } \int_{\bR^d} \frac{u (x) - u (x+y)}{|y|^{d+2s}} \, dy,
\end{equation}
where $c_{d, s}$ is a constant depending only on $d$ and $s$. When $s \in [1/2, 1)$ and $u$ is bounded and $C^{1,1}$, the formula still holds provided that the integral is understood as the principal value.
For  $s \in (0, 1)$ and $u \in L_p (\bR^d)$,  $(-\Delta_x)^s u$ is understood  as a  distribution  given by
\begin{equation}
			\label{eq1.16}
	((-\Delta_x)^{s} u, \phi) = (u, (-\Delta_x)^{s} \phi), \quad \phi \in C^{\infty}_0 (\bR^d).
\end{equation}
To prove that \eqref{eq1.16} defines a distribution, one needs to use the fact that
\begin{equation}
			\label{eq1.15}
	|(-\Delta_x)^{s} \phi (z)|  \le N (d, \phi) (1+|x|)^{-d - 2s}, \quad \phi \in C^{\infty}_0 (\bR^d).
\end{equation}
Furthermore, by \eqref{eq1.15}, for any $\alpha \in (-d - 2 s p, d(p-1))$,
$$
	(-\Delta_x)^{s} \phi \in L_{p/(p-1)}(\bR^d, |x|^{-\alpha/(p-1)}),
$$
 so that \eqref{eq1.16} defines the distribution $(-\Delta_x)^s u$ for any $u \in L_p (\bR^d, |x|^{\alpha})$.
For a detailed discussion of the fractional Laplacians, we refer the reader to \cite{S_19}.
\end{definition}

\noindent\textbf{Convention.}
By $N = N (\cdots)$, we denote a  constant depending only on the parameters inside the
parenthesis. A constant $N$ might change from line to line. Sometimes, when it is clear what parameters $N$  depends on, we omit  them.

\begin{theorem}
            \label{theorem 1.1}
 Let
\begin{itemize}[--]
\item $p$, $r_1, \ldots, r_d$, $q > 1$, $K \ge 1$ be numbers,
$T \in (-\infty, \infty]$,
\item $w_i, i = 0, \ldots, d,$ be weights on $\bR$ such that
\begin{equation}
			\label{eq1.0}
    [w_0]_{A_q(\bR)}, [w_i]_{ A_{r_i} (\bR) } \le K,\quad i = 1, \ldots, d,
\end{equation}
\item $w$ be defined by  \eqref{eq2.2},
\item Assumptions \ref{assumption 2.1} and \ref{assumption 2.3}
hold.
\end{itemize}

There exists a constant
$$
\gamma_0   = \gamma_0  (d, \delta, p, r_1, \ldots, r_d, q, K)  > 0
$$
such that if Assumption \ref{assumption 2.2} $(\gamma_0)$ holds,
then, the following assertions are valid.

$(i)$ There exists a constant
$$
\lambda_0 =   \lambda_0 (d, \delta, p, r_1, \ldots, r_d, q, K, L, R_0)  > 1
$$
such that
for any $\lambda \ge \lambda_0$, $u \in \bS_{p, r_1, \ldots, r_d, q} (\bR^{1+2d}_T, w)$,
 and
 $g, \vec f \in L_{p, r_1, \ldots, r_d, q} (\bR^{1+2d}_T, w)$
 satisfying Eq. \eqref{1.1},
one has
\begin{equation}
			\label{2.1.2}
 \lambda^{ 1/2 }  \|u\| + \|D_v u\| +  \|(-\Delta_x)^{1/6} u\|
    \le N  \lambda^{-1/2} \|g\| + N \|\vec f\|,
\end{equation}
where $R_0 \in (0, 1)$ is the constant in  Assumption \ref{assumption 2.2} $(\gamma_0)$,
 $$
 \|\,\cdot\,\| = \|\,\cdot\,\|_{ L_{p, r_1, \ldots, r_d, q} (\bR^{1+2d}_T, w)  },
\quad\text{and}\quad N = N (d, \delta, p, r_1, \ldots, r_d, q, K).
$$

$(ii)$ For any $\lambda \ge \lambda_0$,  $\vec f, g \in L_{p, r_1, \ldots, r_d, q} (\bR^{1+2d}_T, w)$,
Eq. \eqref{1.1}
has a unique solution $u \in \bS_{p, r_1, \ldots, r_d, q} (\bR^{1+2d}_T, w)$.
Here $\lambda_0$ is the constant from the assertion $(i)$.

$(iii)$ For any numbers $-\infty<S < T<\infty$
and
$\vec f, g \in L_{p, r_1, \ldots, r_d, q} ((S, T) \times \bR^{2d}, w)$,
Eq. \eqref{2.1} has a unique solution
$u  \in \bS_{ p, r_1, \ldots, r_d, q } ((S, T) \times \bR^{2d}, w)$.
In addition,
$$
 	  \|u\| + \|D_v u\|  +  \|(-\Delta_x)^{1/6} u\|
   \le N \|\vec f\| + N \|g\|,
$$
where
$$
    \|\,\cdot\,\|=\|\,\cdot\,\|_{L_{p, r_1, \ldots, r_d, q} ((S, T) \times \bR^{2d}, w)  } \, \,  \text{and} \, \,  N = N (d, \delta, p,r_1, \ldots, r_d, q, K, L, R_0, T-S).
$$

$(iv)$  Let $\alpha \in (-1, p-1)$.
The assertions $(i) - (iii)$ also hold in the case when
\begin{align*}
	&\vec f, g  \in L_{p; r_1, \ldots, r_d} (\bR^{1+2d}_T, |x|^{\alpha} \prod_{i = 1}^d w_i (v_i)),  \\
& u \in \bS_{p; r_1, \ldots, r_d} (\bR^{1+2d}_T, |x|^{\alpha} \prod_{i = 1}^d w_i (v_i)).
\end{align*}
Furthermore,   one needs to take into account the dependence of constants  $\gamma_0, \lambda_0, N$ on $\alpha$ and remove the dependence on $q$.
\end{theorem}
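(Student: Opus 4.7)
The plan is to follow a Krylov-style mean oscillation argument adapted to the anisotropic kinetic geometry defined by the quasi-metric $\rho$ in \eqref{1.2}, combined with the weighted Fefferman-Stein sharp function theorem in mixed-norm Lebesgue spaces. Since \cite{DY_21} developed the analogous theory in nondivergence form, the first move is to exploit that input: for the model constant-coefficient operator $\cP_0 = \partial_t - v \cdot D_x + a^{ij}(t) D_{v_i v_j}$ (with $a^{ij}$ depending only on $t$), estimates on $D_v u$ and $(-\Delta_x)^{1/6} u$ in terms of $\vec f$ and $g$ can be obtained by duality with the nondivergence-form estimates of \cite{DY_21}, using that $(-\Delta_x)^{1/6}$ is of the correct scaling to match $|x|^{1/3}$ in $\rho$. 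The fractional $x$-derivative is the natural analogue of the second $v$-derivative in the divergence setting, reflecting the gain of $1/3$ of a spatial derivative one expects from hypoellipticity at the critical order.

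Next, I would establish the key mean oscillation inequality: for any $u$ solving $\cP u = \div \vec f + g$, any kinetic cylinder $Q_r(z_0)$, and any dilation parameter $\kappa \ge 2$, a bound of the form
\begin{equation*}
 \fint_{Q_r(z_0)} \bigl|\Phi(z) - (\Phi)_{Q_r(z_0)}\bigr|\, dz
 \le N\kappa^{-\sigma}\bigl(\bM|\Phi|^{p_0}\bigr)^{1/p_0}(z_0)
 + N\kappa^{\tau}\bigl(\gamma_0^{\mu} (\bM|\Phi|^{p_0})^{1/p_0} + (\bM|\vec f|^{p_0})^{1/p_0} + \cdots\bigr),
\end{equation*}
where $\Phi$ stands for either $D_v u$ or $(-\Delta_x)^{1/6} u$ and $p_0 \in (1,\min(p,r_i,q))$ is chosen small. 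This is done by freezing the coefficients on $Q_{\kappa r}$, splitting $u$ into a homogeneous piece (to which the model estimates above apply, with higher regularity in the kinetic tangent direction) and a remainder absorbed into the $\gamma_0$-perturbation via Assumption \ref{assumption 2.2}. Feeding this into the Fefferman-Stein sharp function theorem in weighted mixed-norm spaces (valid for products of one-dimensional $A_{r_i}$-weights satisfying \eqref{eq1.0}, as in Chapter 7 of \cite{Kr_08}) yields \eqref{2.1.2} after choosing $\gamma_0$ small and $\lambda_0$ large enough to absorb the lower-order terms coming from $b, \overline{b}, c$ via Assumption \ref{assumption 2.3}. Part (ii) then follows by the method of continuity from the constant-coefficient case, and (iii) reduces to (ii) by introducing an exponential weight $e^{-\lambda t}$ together with a time cutoff that extends the solution by zero past $S$.

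For (iv), I would note that $|x|^{\alpha}$ is an $A_{p}(\bR^{d})$-weight in the $x$-variable (Remark \ref{rem2.1}), so the sharp function step can be reinterpreted: one applies the Muckenhoupt-weighted maximal function theorem in the joint $(t,x)$-variable, which merges $t$ and $x$ into a single $L_p$-block and removes the separate $q$-iteration, hence the dependence on $q$ drops while a dependence on $\alpha$ appears. The main obstacle is establishing the mean oscillation inequality in divergence form, because one cannot differentiate the equation in $v$ to produce pointwise second-derivative information directly; the gain of $D_v u$ and $(-\Delta_x)^{1/6} u$ must be extracted through duality against nondivergence-form test solutions, and this duality has to be compatible with the tilted, non-symmetric cylinders $Q_r(z_0)$ in a way that keeps track of the $v_0$-dependent translation in the $x$-variable when freezing. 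Managing this non-symmetry while retaining the scaling exponents $(1/2,\,1/3,\,1)$ for $(t,x,v)$ is the genuine technical difficulty of the argument.
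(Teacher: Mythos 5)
Your outline captures the main architecture the paper actually uses: duality against the nondivergence-form estimates of \cite{DY_21} for the constant-coefficient model $P_0$ with $a^{ij}=a^{ij}(t)$; a mean oscillation estimate built from splitting the solution into a $P_0+\lambda$-caloric part and a remainder; coefficient freezing to exploit Assumption~\ref{assumption 2.2}; the weighted mixed-norm Fefferman--Stein and Hardy--Littlewood theorems; and then the method of continuity and an exponential multiplier for parts (ii) and (iii). But there are two genuine gaps.

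First, the mean oscillation inequality you describe can only be established for small cylinders, namely $r \lesssim R_0/\nu$; for $r\gtrsim R_0$ Assumption~\ref{assumption 2.2} gives no smallness of $(|a-\bar a|)_{Q_{\nu r,c\nu r}}$, so the $\gamma_0^{\mu}$ factor is simply not available there. The paper's fix (Lemma~\ref{lemma 11.6}, Step 1) is to first prove the estimate for $u$ supported in a thin time slab of width $\sim (R_0R_1)^2$, in which case the large-scale cylinders carry an extra small factor $(R_0R_1/r)^{2/(p_1\alpha_1)}$ making the trivial bound suffice, and then (Step 2) to remove the support restriction by a partition of unity in $t$ — which, incidentally, is why the extra $\lambda^{-1/2}R_0^{-2}\|u\|$ term appears in \eqref{eq11.6.1} and why $\lambda_0$ must depend on $R_0$. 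For part (iv) the weight $|x|^{\alpha}$ merges $t$ and $x$ into a joint $L_p$ block as you say, but there is no time weight left to cut, so the partition of unity must be taken in $v_d$ instead; your outline does not flag either the scale restriction or the need for a slab-support device. Second, for the existence assertion (ii), the method of continuity requires showing that the $\bS_{p_1}(\bR^{1+2d})$ solution produced for approximating compactly supported data actually belongs to the weighted mixed-norm space; this requires the decay estimate of Lemma~\ref{proposition 11.7} (analogous to Lemma~\ref{lemma 12.1}) together with the embedding \eqref{eq1.1}. Without such a decay estimate the continuity argument has no endpoint in $\bS_{p,r_1,\ldots,r_d,q}(\bR^{1+2d},w)$, and your sketch omits this step entirely.
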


\begin{remark}
			\label{remark 2.2}
The assertion $(iii)$ is derived from $(ii)$ by using an exponential multiplier (see, for example, Theorem  2.5.3 of \cite{Kr_08}).
\end{remark}

\begin{remark}
By viewing an elliptic equation as a steady state parabolic equation,
we can obtain the corresponding results for elliptic equations when the coefficients and data are independent of the temporal variable. See, for example, the proof of \cite[Theorem 2.6]{Kr07}.
\end{remark}

To the best of the present authors' knowledge, Theorem \ref{theorem 1.1} provides the first global a priori $L_p$-estimate with $p \ne 2$ for kinetic KFP equations in divergence form with non-smooth coefficients (see Section \ref{subsection 1.4}).
We also prove the first unique solvability result in $\bS_p$ space for Eq. \eqref{1.1}  in the case of the variable coefficients $a^{i j}$.
To the best of our knowledge, the imposed assumption on the leading coefficients $a^{i j}$ (see Assumption \ref{assumption 2.2}) is weaker than the ones in the existing literature (see Section \ref{subsection 1.4}).

To prove Theorem \ref{theorem 1.1}, we use the results and techniques of \cite{DY_21}, which are based on N.V. Krylov's kernel free approach to nondegenerate parabolic equations (see \cite{Kr_08}, Chapters 4 - 7).
 The main part of the argument is  the mean oscillation estimates  of $(-\Delta_x)^{1/6} u$,  $\lambda^{1/2} u$, and $D_v u$ in the case when the coefficients $a^{i j}$ are independent of the $x$ and $v$ variables. Our proof of these inequalities does not involve the fundamental solution of the KFP operator. Instead, we use the scaling properties of the KFP equation combined with localized $L_p$-estimates and a pointwise formula for  fractional Laplacians in order to get a mean oscillation estimates of solutions. By using the method of frozen coefficients, we generalize the aforementioned mean oscillation estimates to the case when $a^{ij}$ also depend on $x$ and $v$. Once such inequalities are established, the a priori estimates are obtained by using the Hardy-Littlewood and Fefferman-Stein type theorems (see Theorem \ref{theorem A.4}).

\subsection{\texorpdfstring{$L_2$}{} theory for the  kinetic equations with bounded measurable coefficients}
Let $x \in \bR^d$ and $\sfv \in \bR^{d_1}$ for some $d_1 = \{1, 2, \ldots\}$,
and $\alpha$ be a mapping from $\bR^{d_1}$ to $\bR^d$, and $a, b, \bar b, c, \vec f, g$ be functions of $t, x, \sfv$.

We consider the  equation
\begin{equation}
			\label{1.3.2}
	\cP_{\alpha}  u + \text{div}_{\sfv} (\bar b u) + b \cdot D_{\sfv} u + (c+\lambda) u = \text{div}_{\sfv} \vec f + g,
\end{equation}
where
$$
	\cP_{\alpha} u = \partial_t u  + \alpha (\sfv) \cdot D_x u - D_{\sfv_i} (a^{ i j} D_{\sfv_j} u).
$$

\begin{assumption}
			\label{assumption 1.3.2}
The function $\alpha$ is  such that for some $\theta \in (0, 1]$,
$$
	\sup_{\sfv \ne \sfv'} \frac{|\alpha (\sfv) - \alpha (\sfv')|}{|\sfv-\sfv'|^{\theta}} < \infty.
$$
\end{assumption}

\begin{remark}
Here we give examples of the equations of type \eqref{1.3.2} appeared in the existing literature.

\textit{Kinetic  equations:}  $d_1 = d$ and $\alpha (\sfv) = \pm \sfv$ or $\alpha = \frac{\pm \sfv}{ (1+|\sfv|^2)^{1/2} }$.
In the second case, Eq \eqref{1.3.2} with such $\alpha$ can be viewed as a relativistic counterpart of Eq. \eqref{1.1}.

\textit{The Mumford equation.} Another example comes from computer vision. In \cite{M_94}, D. Mumford considered the operator
$$
	\partial_t u + \cos (\sfv) D_{x_1} u + \sin (\sfv) D_{x_2} u - D^2_{\sfv} u, \, \, t, \sfv, x_1, x_2 \in \bR,
$$
which is an operator of the Kolmogorov-Fokker-Planck type.
For the discussion of certain PDE aspects of this operator, see \cite{KPP_16}.
\end{remark}

\begin{definition}
For $\alpha: \bR^{d_1} \to \bR^d$ and an open set $G \subset \bR^{1+d+d_1}$,
 we say that $u \in \mathfrak{S}_2 (G, \alpha)$ if $u, D_\sfv u \in  L_2 (G)$ , and $\partial_t u + \alpha (\sfv) \cdot D_x u \in \bH^{-1}_2 (G)$.
\end{definition}

Here is the $\mathfrak{S}_2 (\bR^{1+d+d_1}_T, \alpha)$ unique solvability result for Eq. \eqref{1.3.2}.
\begin{theorem}
			\label{theorem 1.10}
Let $T \in (-\infty, \infty]$, and $a$, $b$, $\bar b$ be functions satisfying Assumptions \ref{assumption 2.1} and \ref{assumption 2.3} (with $\bR^{1+2d}$ replaced with $\bR^{1+d+d_1}$),
and $\alpha: \bR^{d_1} \to \bR^d$ satisfy Assumption \ref{assumption 1.3.2}.
Then, there exists $\lambda_0 = \lambda_0 (d, d_1, \delta, L) > 0$ such that for any $\lambda \ge \lambda_0$,
and functions $u \in \mathfrak{S}_2 (\bR^{1+d+d_1}_T, \alpha)$, $\vec f, g \in L_2 (\bR^{1+d+d_1}_T)$ satisfying \eqref{1.3.2}, we have
\begin{equation}
			\label{eq1.10.1}
	\lambda^{1/2} \|u\| + \|D_{\sfv} u\|  \le N \|\vec f\| +  N   \lambda^{-1/2} \|g\|,
\end{equation}
where
$$
	N  = N (d, d_1, \delta), \quad	\| \cdot \| = \| \cdot \|_{  L_2 (\bR^{1+d+d_1}_T) }.
$$
In addition, for any $\vec f, g \in L_2 (\bR^{1+d+d_1}_T)$ and $\lambda \ge \lambda_0$, Eq. \eqref{1.3.2} has a unique solution $u \in  \mathfrak{S}_{2} (\bR^{1+d+d_1}_T, \alpha)$.

$(ii)$ For any numbers $S < T$ and $\vec f, g \in L_2 ((S, T) \times \bR^{d+d_1})$, the Cauchy problem
$$
	\cP_{\alpha} u + \text{div}_{\, \sfv} (\bar b u) +  b \cdot D_{\sfv} u + c u = \text{div}_{\, \sfv} \vec f + g, \quad u (S, \cdot) = 0
$$
has a unique solution $u \in  \mathfrak{S}_2 ((S, T) \times \bR^{d+d_1}, \alpha)$
and, furthermore,
$$
		 \|u\| + \|D_v u\|  \le N  \|\vec f\| +  N \|g\|,
$$
where
$$
	N  = N (d, d_1, \delta, L, T-S), \quad \| \cdot \| = \| \cdot \|_{  L_2 ((S, T) \times  \bR^{d+d_1}) }.
$$
\end{theorem}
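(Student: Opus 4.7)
The plan is to establish the a priori estimate \eqref{eq1.10.1} via an $L_2$ energy identity, and to deduce existence through the method of continuity. Assertion $(ii)$ is derived from $(i)$ by the exponential multiplier argument (cf.\ Remark \ref{remark 2.2}), after extending the Cauchy solution by zero to $t < S$.

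For the a priori estimate, the key observation is that the transport operator $\partial_t + \alpha(\sfv) \cdot D_x$ is formally skew-adjoint on functions with compact support in $\bR^{1+d+d_1}_T$: for $u \in C^\infty_0(\bR^{1+d+d_1}_T)$,
\[
\int (\partial_t u + \alpha(\sfv) \cdot D_x u)\, u \, dz = \tfrac{1}{2} \int \bigl[\partial_t(u^2) + \alpha(\sfv) \cdot D_x (u^2)\bigr] dz = 0,
\]
since $u$ has compact support in $t \in (-\infty, T)$ (killing the first piece) and $\alpha$ is independent of $x$ (so integration by parts in $x$ kills the second). Pairing \eqref{1.3.2} against $u$ and invoking Assumption \ref{assumption 2.1} (ellipticity of $a^{ij}$) together with Assumption \ref{assumption 2.3} ($L^\infty$ bounds on $b,\bar b,c$) yields
\[
\delta \|D_\sfv u\|^2 + \lambda \|u\|^2 \le \|\vec f\| \|D_\sfv u\| + \|g\| \|u\| + N L \bigl(\|u\| \|D_\sfv u\| + \|u\|^2\bigr),
\]
after which Young's inequality and the choice $\lambda \ge \lambda_0(d, d_1, \delta, L)$ absorb the lower-order terms on the left and produce \eqref{eq1.10.1}. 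For a general $u \in \mathfrak{S}_2(\bR^{1+d+d_1}_T, \alpha)$, we approximate by smooth compactly supported functions via mollification in $(t, x)$---which commutes with the transport operator because $\alpha = \alpha(\sfv)$---and a cut-off in $(t, x, \sfv)$; the commutator terms, typified by $u\,\alpha(\sfv)\cdot D_x \chi$, vanish in $L_2$ as the support grows, using the $L_2$-integrability of $u$ and Assumption \ref{assumption 1.3.2}.

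For existence we apply the method of continuity to the family
\[
T_\tau u := \partial_t u + \alpha(\sfv) \cdot D_x u - D_{\sfv_i}\bigl(((1-\tau)\delta^{ij} + \tau a^{ij}) D_{\sfv_j} u\bigr) + \tau\bigl[\text{div}_\sfv(\bar b u) + b \cdot D_\sfv u + c u\bigr] + \lambda u,
\]
for $\tau \in [0,1]$. The interpolated leading coefficient $(1-\tau)\delta^{ij} + \tau a^{ij}$ still satisfies Assumption \ref{assumption 2.1} with the same $\delta$, so the a priori estimate above holds uniformly in $\tau$. It remains to show solvability at $\tau = 0$, where the equation reduces to $\partial_t u + \alpha(\sfv) \cdot D_x u - \Delta_\sfv u + \lambda u = \text{div}_\sfv \vec f + g$. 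Taking the Fourier transform in $x$ with dual variable $\xi$ reduces this, for each $\xi$, to a parabolic equation on $(-\infty, T) \times \bR^{d_1}$ whose spatial operator $-\Delta_\sfv + i\alpha(\sfv)\cdot \xi + \lambda$ has sesquilinear form with real part $\|D_\sfv \phi\|^2 + \lambda \|\phi\|^2$, coercive on $H^1(\bR^{d_1})$, and is therefore uniquely solvable by standard $L_2$ parabolic theory. Parseval's identity and Fourier inversion then produce $u \in \mathfrak{S}_2(\bR^{1+d+d_1}_T, \alpha)$ at $\tau = 0$. Uniqueness at $\tau = 1$ (the original equation) is immediate from the a priori estimate.

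The chief technical difficulty is the rigorous justification of the energy identity for elements of $\mathfrak{S}_2(\bR^{1+d+d_1}_T, \alpha)$, as this space does not encode explicit trace information at $\{t = T\}$ and $\alpha$ is merely Hölder continuous. The crucial structural fact---that $\alpha$ depends only on $\sfv$---allows mollification in $(t, x)$ to commute with the transport operator, so the integration-by-parts calculation can be transferred from smooth compactly supported approximations through a density argument; Hölder continuity of $\alpha$ (Assumption \ref{assumption 1.3.2}) then controls the $(x, \sfv)$-cut-off commutators even when $\alpha$ is unbounded.
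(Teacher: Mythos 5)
Your overall architecture matches the paper's: establish the a priori estimate through an $L_2$ energy identity for the transport operator $Y=\partial_t+\alpha(\sfv)\cdot D_x$, deduce existence by the method of continuity, and get assertion $(ii)$ by the exponential multiplier. However, you take a genuinely different technical route at the two points where the paper's proof does real work, and one of your substitutes has a gap.

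\emph{Energy identity.} The paper proves $\langle Yu,u\rangle_\infty=0$ (Lemma \ref{lemma 7.1}) by mollifying $u$ in \emph{all} variables $(t,x,\sfv)$ with the anisotropic scaling $\eta((t-\cdot)/\varepsilon,(x-\cdot)/\varepsilon^{\theta/2},(\sfv-\cdot)/\varepsilon)$. The $\sfv$-mollification does \emph{not} commute with $\alpha(\sfv)\cdot D_x$; the commutator $(Yu)_\varepsilon-Yu_\varepsilon$ picks up a factor $\varepsilon^{-\theta/2}$ from $D_x\eta$ and a factor $\varepsilon^{\theta}$ from the $\theta$-H\"older continuity of $\alpha$ applied over a $\sfv$-ball of radius $\varepsilon$, netting $\varepsilon^{\theta/2}\to0$. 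This balance is precisely why the exponent in the $x$-direction is $\theta/2$; it is the heart of the lemma, and no cutoff is needed. You instead mollify only in $(t,x)$, which \emph{does} commute exactly with $Y$ --- an attractive simplification that eliminates the mollification commutator entirely --- and then introduce a cut-off in $(t,x,\sfv)$. This can be made to work, but it is not as straightforward as you indicate: mollifying only in $(t,x)$ leaves $u_\varepsilon$ only $H^1$ in $\sfv$, so to identify $\langle Yu_\varepsilon,u_\varepsilon\rangle$ with the ordinary $L_2$ integral you first need $Yu_\varepsilon=\partial_t u_\varepsilon+\alpha(\sfv)D_xu_\varepsilon\in L_2$, which fails when $\alpha$ is unbounded (Assumption \ref{assumption 1.3.2} only gives $|\alpha(\sfv)|\lesssim 1+|\sfv|^\theta$). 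Hence the $\sfv$-cutoff must come \emph{before} the mollification, and the cut-off commutator $u\,\alpha(\sfv)\cdot D_x\chi_R$ must then be shown to vanish; this again requires an anisotropic scaling, e.g.\ an $x$-radius growing strictly faster than the $\sfv$-radius to the power $\theta$. So where the paper pushes the H\"older structure of $\alpha$ into the mollification commutator, you push it into the cut-off commutator --- legitimate, but the bookkeeping is comparable and the order of operations (cut off, then mollify in $(t,x)$) is essential.

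\emph{Solvability at $\tau=0$.} Here your route diverges from the paper's more seriously. The paper shows that $(Y-\Delta_\sfv+\lambda)C^\infty(\bR^{1+d+d_1})$ is dense in $\bH^{-1}_2$ by a duality/contradiction argument: a nonzero $u\in H^1_2$ annihilating the range would solve $-Yu-\Delta_\sfv u+\lambda u=0$, and mollifying and invoking the a priori estimate (with $t\mapsto -t$) forces $u\equiv0$. You propose taking Fourier transform in $x$ and solving, for each $\xi$, the parabolic equation with spatial operator $-\Delta_\sfv+i\alpha(\sfv)\cdot\xi+\lambda$. The coercivity of the real part is fine, but the associated sesquilinear form $\int i(\alpha(\sfv)\cdot\xi)\phi\bar\psi\,d\sfv$ is \emph{not} bounded on $H^1(\bR^{d_1})\times H^1(\bR^{d_1})$ when $\alpha$ grows like $|\sfv|^\theta$, so ``standard $L_2$ parabolic theory'' (Lions/Lax--Milgram) does not apply directly; you would need to truncate $\alpha$, solve, obtain a $\xi$-uniform a priori bound (which the coercivity does provide), and pass to the limit. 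This is fixable but is a genuine gap as written. The paper's duality argument sidesteps it entirely, since the anisotropic-mollification machinery already constructed for the a priori estimate does double duty.
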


\begin{corollary}
			\label{corollary 1.11}
In the case when $d_1 = d$ and $\alpha  = - \sfv$, which corresponds to the kinetic KFP equation, in addition to \eqref{eq1.10.1},  we have
$$
	  \|(-\Delta_x)^{1/6} u\|_{  L_2 (\bR^{1+2d}_T) }
 \le N   \|\vec f\|_{  L_2 (\bR^{1+2d}_T) } +   N \lambda^{-1/2}  \|g\|_{  L_2 (\bR^{1+2d}_T) },
$$
where $N  = N (d,  \delta)$.
\end{corollary}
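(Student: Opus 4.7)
The plan is to deduce the $(-\Delta_x)^{1/6} u$ bound from the $L_2$-estimates supplied by Theorem~\ref{theorem 1.10} together with a kinetic hypoelliptic gain. The structural fact that distinguishes the case $\alpha = -\sfv$ is the commutator identity $[D_{\sfv_i}, \partial_t - \sfv\cdot D_x] = -D_{x_i}$, which allows the $L_2$ control of $D_\sfv u$ to be traded for fractional regularity of $u$ in $x$; this is the same Hörmander-type mechanism that underlies the hypoellipticity of the free KFP operator.

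First, I would apply Theorem~\ref{theorem 1.10} to produce the bound $\lambda^{1/2}\|u\| + \|D_\sfv u\| \le N(\|\vec f\| + \lambda^{-1/2}\|g\|)$, where $\|\cdot\| = \|\cdot\|_{L_2(\bR^{1+2d}_T)}$. Next, I would rewrite equation~\eqref{1.3.2} by moving the second-order velocity term to the right-hand side, obtaining
\[
(\partial_t - \sfv \cdot D_x) u = \text{div}_{\sfv} \vec H + G,
\]
with $\vec H^i := a^{ij} D_{\sfv_j} u + f^i - \bar b^i u$ and $G := g - b\cdot D_\sfv u - (c+\lambda) u$. The bounds already established yield $\|\vec H\| \le N(\|\vec f\| + \lambda^{-1/2}\|g\|)$, while $G$ satisfies only the weaker bound $\|G\| \le N(\lambda^{1/2}\|\vec f\| + \|g\|)$ because of the $\lambda u$ contribution.

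The heart of the argument is a Bouchut-type hypoelliptic estimate: if $u, D_\sfv u \in L_2(\bR^{1+2d}_T)$ and $(\partial_t - \sfv\cdot D_x)u = \text{div}_{\sfv} \vec H + G$ in the sense of distributions with $\vec H, G \in L_2$, then
\[
\|(-\Delta_x)^{1/6} u\|_{L_2} \le N\bigl(\|D_\sfv u\| + \|\vec H\| + \lambda^{-1/2}\|G\|\bigr).
\]
Such an estimate can be proved in the $L_2$ setting by a Fourier high/low frequency decomposition in $(t,x)$: in the regime where the velocity Fourier variable $|\eta|$ is comparable to $|\xi|^{1/3}$ one uses the $D_\sfv u$ control, while in the complementary regime the transport eigenvalue $|\tau - \sfv\cdot \xi|$ dominates and one exploits the equation to convert $x$-derivatives into $v$-derivatives via the commutator identity above. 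The $\lambda^{-1/2}$ weight on $\|G\|$ is consistent with the kinetic scaling $t:x:\sfv = 2:3:1$ and is produced by rescaling so that the $\lambda u$ term becomes an $O(1)$ source. Substituting the bounds on $\vec H$ and $\lambda^{-1/2}\|G\|$, both of which are dominated by $N(\|\vec f\| + \lambda^{-1/2}\|g\|)$, then yields the claim.

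The main obstacle is the bookkeeping of $\lambda$-dependence in the hypoelliptic lemma to preserve the sharp $\lambda^{-1/2}\|g\|$ factor on the right-hand side: a naive application that places $\|G\|$ on the right without the $\lambda^{-1/2}$ weight would lose a $\lambda^{1/2}$ in front of $\|\vec f\|$. An alternative route is to approximate the bounded measurable $a^{ij}$ by smooth ones and invoke Theorem~\ref{theorem 1.1}, whose estimate~\eqref{2.1.2} already contains $\|(-\Delta_x)^{1/6} u\|$; however, the threshold $\lambda_0$ in Theorem~\ref{theorem 1.1} depends on the VMO radius $R_0$, which degenerates under approximation, so passing to the limit would require additional care.
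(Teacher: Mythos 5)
The paper's proof is far simpler than what you propose, and the alternative route you dismiss at the end is in fact the paper's route; your reason for dismissing it is based on a misunderstanding. The trick is to rewrite the equation with the \emph{constant} leading coefficients $\delta_{ij}$: one moves the discrepancy $D_{\sfv_i}\big((a^{ij}-\delta_{ij}) D_{\sfv_j} u\big)$ to the right-hand side and absorbs it into the divergence-form source, so that the left-hand side becomes $\partial_t u - \sfv\cdot D_x u - \Delta_\sfv u + \text{div}_\sfv(\bar b u) + b\cdot D_\sfv u + (c+\lambda)u$. For the operator with $a^{ij}=\delta_{ij}$, Assumption \ref{assumption 2.2} holds trivially for every $\gamma_0$ and every $R_0$ (take $R_0=1$), so the threshold $\lambda_0$ in Theorem \ref{theorem 1.1} is a constant $\lambda_0(d,\delta,L)$ that does not degenerate; no approximation of $a^{ij}$ by smooth coefficients is needed at all. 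Theorem \ref{theorem 1.1} with $p=2$ then yields $\|(-\Delta_x)^{1/6}u\| \le N\|(a-I)D_\sfv u\| + N\|\vec f\| + N\lambda^{-1/2}\|g\|$, and Assumption \ref{assumption 2.1} plus \eqref{eq1.10.1} bound the first term by $N(\|\vec f\|+\lambda^{-1/2}\|g\|)$.

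Your main line of attack has a genuine gap. You move the \emph{entire} second-order velocity term into $\vec H$ and then invoke a Bouchut-type hypoelliptic estimate $\|(-\Delta_x)^{1/6}u\|\le N(\|D_\sfv u\|+\|\vec H\|+\lambda^{-1/2}\|G\|)$ for the pure transport equation $(\partial_t-\sfv\cdot D_x)u=\text{div}_\sfv\vec H+G$. There is no $\lambda$ in that transport equation, so the $\lambda^{-1/2}$ weight on $\|G\|$ is not something a scaling argument can produce; scaling only tells you the estimate is dimensionally consistent, not that it holds. A standard averaging-type estimate would place $\|G\|$ (not $\lambda^{-1/2}\|G\|$) on the right, and since $G$ contains $\lambda u$, feeding in $\lambda^{1/2}\|u\|\le N(\|\vec f\|+\lambda^{-1/2}\|g\|)$ gives $\|G\|\lesssim \lambda^{1/2}\|\vec f\|+\|g\|$, exactly the loss you yourself flag. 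You would therefore need to prove the $\lambda$-weighted lemma from scratch, which is a nontrivial task (and, as far as I can tell, not a known statement); in contrast, the constant-coefficient reduction avoids the issue entirely by keeping the regularizing $\Delta_\sfv$ on the left-hand side, where Theorem \ref{theorem 1.1} already supplies the sharp $\lambda$-dependence.
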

\begin{proof}
 Note that  the identity
\begin{align*}
	&\partial_t u - \sfv \cdot D_x u - \Delta_{\sfv} u +  \text{div}_{\sfv} (\bar b u) +  b \cdot D_{\sfv} u + (c + \lambda) u \notag\\
&= D_{\sfv_i} ((a^{i j} - \delta_{i j}) D_{\sfv_j} u)+\text{div}_{\sfv} \vec f + g
\end{align*}
is true. Here $\Delta_{\sfv}$ is the Laplacian in the $\sfv$ variable.
Then, by Theorem \ref{theorem 1.1},
\begin{align*}
&	\|(-\Delta_x)^{1/6} u\|_{  L_2 (\bR^{1+2d}_T) }\\
& \le N \|(a^{i j} - \delta_{i j}) D_{\sfv_j}u\|_{  L_2 (\bR^{1+2d}_T) }
+N  \|\vec f\|_{  L_2 (\bR^{1+2d}_T) } +   N \lambda^{-1/2} \|g\|_{  L_2 (\bR^{1+2d}_T) }.
\end{align*}
This combined with Assumption \ref{assumption 2.1} and  \eqref{eq1.10.1} give the desired estimate.
\end{proof}

The results of Theorem \ref{theorem 1.10} and Corollary \ref{corollary 1.11} are not surprising,
however, the present authors have not seen such  assertions in their full  generality in the existing literature.

\subsection{Motivations}
\textit{Filtering.} Stochastic partial differential equation (SPDEs) in divergence form appear naturally in the theory of partially observable diffusion processes.
 In particular, the unnormalized conditional probability density of the unobservable component of the diffusion process with respect to the observable one satisfies a linear SPDE called Dunkan-Mortensen-Zakai equation (see, for example, \cite{RL_18}).
 In \cite{Kr_10}, N.V. Krylov showed that  the $L_p$ theory of SPDEs can be used to deduce certain regularity properties of the unnormalized density.
For the Langevin type diffusion processes, such a program was carried out  in \cite{ZZ_21} (see also \cite{PP_19}). In particular, the authors developed the Besov regularity theory for the equation
$$
	du = [v \cdot D_x u - a^{i j} D_{v_i v_j} u + b \cdot D_v u + f] \, dt  + [\sigma^k \cdot D_v u + g^k]\, dw^k,
$$
where $w_{k}, k \ge 1$ is a sequence of independent standard Wiener processes.
In the same paper, they  used that regularity theory to  show that the unnormalized conditional probability density is a continuous function.
We believe that Theorem \ref{theorem 1.1} is useful in developing the theory of stochastically forced KFP equations in \textit{divergence form} in the case when the forcing term $g$ belongs to the $L_p$ space with respect to the probability measure and  $t, x, v$.

\textit{Kinetic theory.}
The nonlinear Landau equation is an important model of collisional plasma which has been studied extensively (see, for example, \cite{AV_04},  \cite{M_18}, \cite{GIMV_19}, \cite{KGH}). A linearized version of this equation has the form of Eq. \eqref{1.1}.
Recently, there has been an interest in developing the $L_p$ theory of KFP equations with rough coefficients. Such results are useful for establishing the well-posedness of diffusive kinetic equations in bounded domains with the specular reflection boundary condition (see \cite{DGY_21}), and for the conditional regularity problem (see \cite{GIMV_19}).

\subsection{Related works}
			\label{subsection 1.4}
\textit{Divergence form equations}

Many  articles on KFP equations in divergence form are concerned with the local boundedness, Harnack inequality (including a nonhomogeneous version), and H\"older continuity of solutions to \eqref{1.1} (see \cite{PP_04}, \cite{WZ_09}, \cite{GIMV_19}, \cite{M_18}, \cite{GI_21}, \cite{GM_21}, \cite{Z_21}). See also the references in \cite{AP_20}.

It seems that there are few works on the Sobolev space theory of Eq. \eqref{1.1}.
Previously, an interior $L_p$ estimate of $D_v u$ for \eqref{1.1} was established in \cite{MP_98} under the assumption that $u, \vec f \in L_{p, \text{loc}}$, $u,D_v u, (\partial_t - v \cdot D_x) u \in L_{2, \text{loc}}$  and $g \equiv 0$
by using the explicit representation of the fundamental solution of the operator $\cP$ (see \eqref{1.1.0}) and singular-integral techniques. In addition, in the same work, it  was showed that if $p$ is large enough, then $u$ is locally H\"older continuous with respect to $\rho$ (see \eqref{1.2}).
The authors imposed the VMO condition with respect to $\rho$ on the coefficients $a^{i j}$, which is stronger than Assumption \ref{assumption 2.2}.
It can be seen from \eqref{1.2} that such an assumption might not be satisfied even when the coefficients  $a^{i j} = a^{i j} (x, v)$ are smooth, bounded, and independent of $t$.
A similar result in the ultraparabolic Morrey spaces  was proved in \cite{PR_01}.
We point out that the papers \cite{MP_98} and \cite{PR_01} are concerned with the operators that are more general than $\cP$.
We also mention a recent paper \cite{AAMN_21} which studies the $L_2$-regularity theory, the trend to equilibrium, and enhanced dissipation for the KFP equation in divergence form with $a^{i j} = \delta_{i j}$.

\textit{Nondivergence form equations}

 For a thorough review of the classical theory for the generalized KFP equations, we refer the reader to \cite{AP_20}.
An overview of the literature on the Sobolev theory for KFP equations in nondivergence form can be found in  a recent paper \cite{DY_21}.
We also mention briefly the following papers:
\begin{itemize}
\item  \cite{BCM_96} on the interior $S_p$ estimate with leading coefficients of class $VMO$ with respect to $\rho$;

\item the articles \cite{BCLP_10}, \cite{CZ_19}, \cite{BCLP_13}, \cite{NZ_20} where the global $S_p$ estimate is proved under the following assumptions: either the leading coefficients are constant or independent of $x, v$, or they  are continuous with respect to $\rho$;

\item  \cite{DY_21}, where the present authors proved the global $S_p$ estimate and unique solvability results with the coefficients satisfying   Assumption \ref{assumption 2.2}.
\end{itemize}

\subsection{Organization of the paper}
In the first  section, we prove the main result in the $\bS_p$ space in the case when the coefficients $a^{i j}$ are independent of $x$ and $v$.
We then extend the a priori estimate to the weighted mixed-norm kinetic spaces in Section \ref{section 3} so that  the  reader    interested only in the constant coefficient case  needs to  read only the first three sections. We prove the main results for the equations with the variable coefficients $a^{i j} = a^{i j} (z)$ in Section \ref{section 4}.

\section{\texorpdfstring{$\bS_p$}{}-estimate for the model equation}

Denote
$$
    P_0 = \partial_t  - v \cdot D_x  - a^{i j} (t) D_{v_i v_j},
$$
where the coefficients $a^{i j}$ satisfy Assumption \ref{assumption 2.1}.

The goal of this section is to prove Theorem \ref{theorem 1.1} with  $L_p$  in place of  the weighted mixed-norm Lebesgue space, $\cP = P_0$ (see \eqref{1.1.0}), and without the lower-order terms (see Theorem \ref{theorem 11.3}).
We do this by using
the duality argument and the $S_p$-estimate taken from \cite{DY_21}, which we state below (see Theorem \ref{theorem 4.10}).

\begin{theorem}
                \label{theorem 11.3}
Let $p > 1$ be a number and
 $T \in (-\infty, \infty]$.
Then, the following assertions hold.

$(i)$
For any number $\lambda > 0$,  $u \in \bS_p (\bR^{1+2d}_T)$,
and $\vec f, g \in L_p (\bR^{1+2d}_T)$,
the equation
\begin{equation}
                \label{11.3.1}
    P_0 u + \lambda u = \div \vec f + g,
\end{equation}
 has a unique solution $u \in \bS_p (\bR^{1+2d}_T)$, and, in addition,
\begin{equation}
                \label{11.3.3}
 \begin{aligned}
&    \lambda^{1/2}  \|u\|_{ L_p (\bR^{1+2d}_T)  }	
+  \|D_v u\|_{ L_p (\bR^{1+2d}_T)  } 	  +   \|(-\Delta_x)^{1/6} u\|_{ L_p (\bR^{1+2d}_T)  }\\
  &  	\le N (d, \delta, p) (  \|\vec f\|_{ L_p (\bR^{1+2d}_T)  } + \lambda^{-1/2} \|g\|_{ L_p (\bR^{1+2d}_T)  }).
 \end{aligned}
 \end{equation}

$(ii)$
For any finite numbers $\lambda \ge 0$,
$S < T$, and
  $
  	\vec f, g \in L_p ((S, T) \times \bR^{2d}),
  $
the Cauchy problem
$$
   P_0 u + \lambda u =  \text{div}_v \vec f + g,
    \quad u (S, \cdot) \equiv 0
$$
has a unique solution
$
	u \in \bS_p ((S, T) \times \bR^{2d})
$
(see  Definition \ref{definition 2.1}), and,
furthermore,
\begin{align*}
     & \lambda^{1/2}  \|u\|_{ L_p ( (S, T) \times \bR^{2d}) }
   +\|D_v u\|_{ L_p ((S, T) \times \bR^{2d}) }
   +  \|(-\Delta_x)^{1/6}  u\|_{ L_p ((S, T) \times \bR^{2d}) }\\
&
        \leq N   ( \|\vec f\|_{  L_p ((S, T) \times \bR^{2d}) }+ \lambda^{-1/2} \|g\|_{  L_p ((S, T) \times \bR^{2d}) }),
\end{align*}
where $N = N (d, \delta, p, T-S)$.
\end{theorem}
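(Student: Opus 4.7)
The plan is to deduce the divergence-form estimate \eqref{11.3.3} from the nondivergence-form $S_{p'}$ estimate of Theorem \ref{theorem 4.10} by duality against the formal adjoint
\[
    P_0^\ast = -\partial_t + v \cdot D_x - a^{ij}(t) D_{v_i v_j}.
\]
Since $a^{ij}$ depends only on $t$, the leading part of $P_0$ is self-adjoint; divergence and nondivergence forms of the model operator coincide. For any $h \in L_{p'}(\bR^{1+2d}_T)$, Theorem \ref{theorem 4.10} (applied after a time reversal) produces a unique $W \in S_{p'}(\bR^{1+2d}_T)$ solving $P_0^\ast W + \lambda W = h$ with
\[
    \lambda \|W\|_{p'} + \lambda^{1/2} \|D_v W\|_{p'} + \|D^2_v W\|_{p'} + \|(-\Delta_x)^{1/3} W\|_{p'} \le N \|h\|_{p'}.
\]
All three left-hand side norms in \eqref{11.3.3} will be extracted from this by pairing $u$ against suitable test functions. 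Existence will then follow from the a priori estimate by approximating $\vec f$ and $g$ by smooth data (for which one can solve directly via Theorem \ref{theorem 4.10}) and passing to the limit; uniqueness is immediate from the estimate and linearity; assertion $(ii)$ reduces to $(i)$ via the exponential-multiplier trick of Remark \ref{remark 2.2}.

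For $\lambda^{1/2}\|u\|_p$, given $h \in L_{p'}$ the identity
\[
    (u, h) = (u, P_0^\ast W + \lambda W) = (P_0 u + \lambda u, W) = -(\vec f, D_v W) + (g, W)
\]
combined with the above $S_{p'}$ bounds gives $|(u,h)| \le N(\lambda^{-1/2}\|\vec f\|_p + \lambda^{-1}\|g\|_p)\|h\|_{p'}$. For $\|(-\Delta_x)^{1/6} u\|_p$, since $P_0^\ast$ commutes with $(-\Delta_x)^s$ one has $P_0^\ast[(-\Delta_x)^{1/6} W] + \lambda (-\Delta_x)^{1/6} W = (-\Delta_x)^{1/6} h$, whence
\[
    ((-\Delta_x)^{1/6} u, h) = (P_0 u + \lambda u, (-\Delta_x)^{1/6} W) = -(\vec f, (-\Delta_x)^{1/6} D_v W) + (g, (-\Delta_x)^{1/6} W).
\]
The mixed derivative $(-\Delta_x)^{1/6} D_v W$ has the same total kinetic order as $D^2_v W$ and $(-\Delta_x)^{1/3} W$; its $L_{p'}$-norm is bounded by $N\|h\|_{p'}$ via anisotropic interpolation (or an anisotropic Mikhlin multiplier argument on the resolvent symbol of $P_0^\ast$). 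Similarly $\|(-\Delta_x)^{1/6} W\|_{p'} \le N\lambda^{-1/2}\|h\|_{p'}$.

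For $\|D_v u\|_p$, given $\vec\phi \in C_0^\infty$ I solve $P_0^\ast W^i + \lambda W^i = \phi^i$ for each $i$. The commutator identity $[P_0^\ast, D_{v_i}] = -D_{x_i}$ gives
\[
    \div \vec\phi = (P_0^\ast + \lambda)\Bigl(\textstyle\sum_i D_{v_i} W^i\Bigr) + \textstyle\sum_i D_{x_i} W^i,
\]
so that
\[
    -(\vec\phi, D_v u) = (u, \div \vec\phi) = -\sum_{i,j}(f^j, D^2_{v_j v_i} W^i) + \sum_i (g, D_{v_i} W^i) + \sum_i (u, D_{x_i} W^i).
\]
The first two sums are bounded by $N(\|\vec f\|_p + \lambda^{-1/2}\|g\|_p)\|\vec\phi\|_{p'}$ via the $S_{p'}$ bounds on each $W^i$. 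For the third, I factor $D_{x_i} = (-\Delta_x)^{1/6} R_i (-\Delta_x)^{1/3}$ where $R_i$ is a bounded Riesz-type transform; then
\[
    |(u, D_{x_i} W^i)| \le N \|(-\Delta_x)^{1/6} u\|_p \|(-\Delta_x)^{1/3} W^i\|_{p'} \le N \|(-\Delta_x)^{1/6} u\|_p \|\phi^i\|_{p'},
\]
and the bound on $\|(-\Delta_x)^{1/6} u\|_p$ from the previous paragraph closes the estimate.

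The main obstacle is precisely this $D_x$-commutator term: passing from nondivergence-form norms (full $D^2_v$ and $(-\Delta_x)^{1/3}$) to divergence-form norms (half as many derivatives, morally) forces one to absorb an entire $x$-derivative of $W^i$, which only the kinetic scaling allows by splitting $D_x$ between $u$ and $W^i$. This dependency also dictates the order of the proof: the $(-\Delta_x)^{1/6} u$ bound must be established before, and then fed into, the $D_v u$ bound.
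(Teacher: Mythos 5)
Your proposal is correct and follows essentially the same route as the paper's proof (Lemma \ref{lemma 11.4} plus the limiting argument): a duality argument against the $S_{p'}$ estimate of Theorem \ref{theorem 4.10} for the time-reversed operator, with the crucial ordering that the $(-\Delta_x)^{1/6}u$ bound must be established first so it can absorb the $D_x$-commutator term via the factorization $D_x = (-\Delta_x)^{1/6}\cR_x(-\Delta_x)^{1/3}$ in the $D_v u$ estimate. Two minor remarks: (1) the bound $\|D_v(-\Delta_x)^{1/6}W\|_{p'}\le N\|h\|_{p'}$ does not require a separate anisotropic interpolation or Mikhlin argument — it is already listed explicitly on the left-hand side of the estimate in Theorem \ref{theorem 4.10}; and (2) the paper first peels off $g$ by solving $P_0 u_1+\lambda u_1=g$ directly in $S_p$ (then using the interpolation Lemma \ref{lemma A.5} for $(-\Delta_x)^{1/6}u_1$) and applies duality only to the remaining $\div\vec f$ part with smooth compactly supported $\vec f$, whereas you keep $g$ inside the duality pairing — both choices work, since the needed $\|(-\Delta_x)^{1/6}W\|_{p'}\le N\lambda^{-1/2}\|h\|_{p'}$ follows from the $\lambda\|W\|_{p'}$ and $\|(-\Delta_x)^{1/3}W\|_{p'}$ bounds by interpolation.
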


\begin{corollary}
				\label{corollary 2.2}
For any $u \in \bS_{p} (\bR^{1+2d}_T)$,
one has $(-\Delta_x)^{1/6} u \in L_{p} (\bR^{1+2d}_T)$, and, in addition,
\begin{align*}
	\|(-\Delta_x)^{1/6} u\|_{ L_{p} (\bR^{1+2d}_T) }
	\le  N \|u\|_{\bS_{p} (\bR^{1+2d}_T)},
\end{align*}
where $N  = N (d, p) > 0$.
\end{corollary}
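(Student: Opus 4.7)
The plan is to reduce the statement to the a priori estimate \eqref{11.3.3} in Theorem \ref{theorem 11.3} applied to the model operator $P_0 = \partial_t - v\cdot D_x - \Delta_v$, which corresponds to the admissible choice $a^{ij}(t) \equiv \delta_{ij}$ (satisfying Assumption \ref{assumption 2.1} with $\delta = 1$).

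Given $u \in \bS_p(\bR^{1+2d}_T)$, the definition of $\bH^{-1}_p$ provides, for every $\varepsilon > 0$, representations $\vec f, g \in L_p(\bR^{1+2d}_T)$ with
$$
(\partial_t - v \cdot D_x) u = \div \vec f + g, \qquad \|\vec f\|_{L_p} + \|g\|_{L_p} \le \|(\partial_t - v\cdot D_x)u\|_{\bH^{-1}_p} + \varepsilon.
$$
Using $\Delta_v u = \div(D_v u)$, I rewrite this as
$$
P_0 u + u = \div(\vec f - D_v u) + (g + u),
$$
so that $u$ solves the equation \eqref{11.3.1} with $\lambda = 1$, right-hand side divergence term $\vec f - D_v u \in L_p$, and source $g + u \in L_p$. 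By the uniqueness part of Theorem \ref{theorem 11.3}$(i)$ this $u$ is the solution provided by that theorem, and the estimate \eqref{11.3.3} then yields
$$
\|(-\Delta_x)^{1/6} u\|_{L_p(\bR^{1+2d}_T)} \le N\bigl(\|\vec f\|_{L_p} + \|D_v u\|_{L_p} + \|g\|_{L_p} + \|u\|_{L_p}\bigr),
$$
with $N = N(d, \delta, p)$ (here $\delta = 1$).

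Taking $\varepsilon \to 0$ and the infimum over admissible $(\vec f, g)$ produces
$$
\|(-\Delta_x)^{1/6} u\|_{L_p(\bR^{1+2d}_T)} \le N\bigl(\|u\|_{L_p} + \|D_v u\|_{L_p} + \|(\partial_t - v \cdot D_x)u\|_{\bH^{-1}_p}\bigr) = N\|u\|_{\bS_p(\bR^{1+2d}_T)},
$$
which is the claim. There is no real obstacle: the only care needed is to package $-\Delta_v u$ into the divergence-form right-hand side so that Theorem \ref{theorem 11.3} applies to $u$ itself (via uniqueness), and to keep track of the fact that all terms on the right are already controlled by $\|u\|_{\bS_p}$ after taking the infimum defining the $\bH^{-1}_p$ norm.
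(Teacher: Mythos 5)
Your proof is correct and follows essentially the same route as the paper's one-line argument: take $a^{ij}=\delta_{ij}$, absorb $-\Delta_v u$ into the divergence part of the right-hand side using $\Delta_v u = \div(D_v u)$, and apply Theorem \ref{theorem 11.3}. You have merely made explicit the bookkeeping (choice of near-optimal $\vec f, g$ from the $\bH^{-1}_p$ norm, the invocation of uniqueness to match $u$ with the solution furnished by the theorem, and the final infimum) that the paper leaves tacit.
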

\begin{proof}
To prove the result, we set $a^{i j} = \delta_{i j}$, find $\vec f$ and $g$ in $L_{p} (\bR^{1+2d}_T)$ such that
$$
	\partial_t u - v \cdot D_x u - \Delta_v u + u = \div \vec f + g
$$
in $\mathbb{H}^{-1}_{p}  (\bR^{1+2d}_T)$,
and apply Theorem \ref{theorem 11.3}.
\end{proof}

Here is the main result of \cite{DY_21} in the case when $\cP = P_0$, which will also be used in the next section.

\begin{theorem}
			\label{theorem 4.10}
Let $p > 1$ be number. Then, the following assertions hold.

$(i)$
For any number $\lambda \ge 0$, $T \in (-\infty, \infty]$, and
$u \in S_{p} (\bR^{1+2d}_T)$,
one has
\begin{equation*}
    \begin{aligned}
&	\lambda \|u \|
+\lambda^{1/2}\|D_v u\|
+\|D^2_v u\|\\
&+ \|(- \Delta_x)^{1/3} u \|
+  \|D_v (- \Delta_x)^{1/6} u \|
\leq N (d, p, \delta)  \| P_0 u   + \lambda  u \|,
    \end{aligned}
\end{equation*}
where $\|\cdot\| = \|\cdot\|_{  L_{p} (\bR^{1+2d}_T) }$.

$(ii)$
For any $\lambda > 0$,
$T \in (-\infty, \infty]$,
and
$f \in L_{p} (\bR^{1+2d}_T)$,
the equation
\begin{equation*}
    P_0 u + \lambda u = f
\end{equation*}
has a unique solution
$u \in S_{p} (\bR^{1+2d}_T)$.

$(iii)$ For any finite numbers $S < T$   and
  $  f \in L_p ((S, T) \times \bR^{2d})$,
the Cauchy problem
$$
    P_0 u  =  f , \quad u (S, \cdot) \equiv 0,
$$
has a unique solution
$u \in
S_p((S, T) \times \bR^{2d})$.
In addition,
\begin{align*}
&    \|u\|
+  \|D_v u\|
+  \|D^2_v u\|
       +\|(-\Delta_x)^{1/3}  u\|
+ \|D_v (- \Delta_x)^{1/6} u \|
\leq N\|f\|,
\end{align*}
where $$
    \|\cdot\| = \|\cdot\|_{ L_{p} ((S, T) \times \bR^{2d}) }, \quad N = N (d, \delta, p, T-S).
    $$
\end{theorem}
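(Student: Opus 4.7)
The plan is to prove the $\bS_p$-estimate for the model equation $P_0 u + \lambda u = f$ with $a^{ij}=a^{ij}(t)$ independent of $(x,v)$ by following a kernel-free Krylov-style approach, analogous to what the present paper does in the divergence-form case. Assertion $(i)$ is the heart of the matter; $(ii)$ and $(iii)$ then follow by standard techniques (method of continuity; exponential multiplier in $t$).

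\textbf{$L_2$ case and mean oscillation bootstrap.} Because $a^{ij}$ is independent of $(x,v)$, the Fourier transform in $(x,v)$ with duals $(\xi,\eta)$ converts the equation into a first-order linear ODE in $t$ along the characteristics $\eta(t)=\eta_0+(t-s)\xi$, which can be integrated explicitly against $\hat f$. Plancherel then gives the $L_2$ version of the estimate with constant depending only on $d,\delta$. For general $p\in(1,\infty)$, I would pass through mean-oscillation estimates on homogeneous solutions: if $P_0 u = 0$ on a kinetic cylinder $Q_R(z_0)$, the parabolic-kinetic scaling $(t,x,v)\mapsto(r^2 t, r^3 x, rv)$, combined with an interior Sobolev/Campanato bootstrap starting from the $L_2$ bound, should yield a decay of the form
\[
\fint_{Q_r(z_0)} \bigl|U - (U)_{Q_r(z_0)}\bigr|\, dz \le N (r/R)^{\alpha} \Bigl(\fint_{Q_R(z_0)}|U|^2\,dz\Bigr)^{1/2}
\]
for each $U\in\{D^2_v u,\,(-\Delta_x)^{1/3}u,\,D_v(-\Delta_x)^{1/6}u\}$ and some $\alpha>0$. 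Combined with the $L_2$ estimate for the inhomogeneous problem on $Q_R$, this translates into a sharp-function bound controlling $U^{\#}_T$ pointwise by a maximal function of $|f|$ (as in the Fefferman--Stein type theorem invoked by the paper). The Hardy--Littlewood and Fefferman--Stein theorem on the space of homogeneous type $(\bR^{1+2d},\rho)$ then upgrades this into the desired $L_p$ bound.

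\textbf{Main obstacle and conclusion.} The hard part is handling the nonlocal operator $(-\Delta_x)^{1/3}$ at the mean-oscillation level: its value at a point depends on $u$ everywhere, so one cannot simply restrict to a cylinder. I would split $u = u_1 + u_2$ with $u_1$ supported near $Q_R$ (treated by the local $L_p$ theory applied to the equation for $u_1$) and $u_2$ smooth on $Q_R$, extracting decay in $r/R$ for the tail $(-\Delta_x)^{1/3}u_2$ by means of the pointwise representation \eqref{eq1.20} and careful control of the singular integral outside $Q_R$; the same decomposition trick also handles the cross-term $D_v (-\Delta_x)^{1/6}u$. Once $(i)$ is established, the method of continuity gives $(ii)$: the a priori estimate is stable along the deformation $a^{ij}_\tau = (1-\tau)\delta_{ij} + \tau a^{ij}(t)$, $\tau\in[0,1]$, and solvability at $\tau=0$ is classical via the explicit Kolmogorov fundamental solution. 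Finally, $(iii)$ follows from $(ii)$ by the exponential multiplier $u = e^{\mu t}\tilde u$ with $\mu\ge\lambda_0$, as recalled in Remark \ref{remark 2.2}, which converts the Cauchy problem on $(S,T)\times\bR^{2d}$ to a whole-space problem covered by $(ii)$ (with constants picking up a harmless factor $e^{\mu(T-S)}$).
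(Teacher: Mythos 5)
The paper does not actually prove Theorem \ref{theorem 4.10}: it quotes the statement and the immediately following remark says it ``follows from Theorem 2.6 of \cite{DY_21} and the scaling property of the operator $P_0$ (see Lemma \ref{lemma 4.1})''. In other words, the entire $S_p$ theory for the model nondivergence operator is imported wholesale from the authors' earlier paper; there is no argument here to compare against. What you have written is therefore a reconstruction, from scratch, of the content of that earlier paper, and it broadly follows the same kernel-free Krylov template (local mean-oscillation estimate for $P_0+\lambda$-caloric functions, plus the Fefferman--Stein and Hardy--Littlewood theorems on the kinetic homogeneous space). So the \emph{route} is genuinely different from the present paper's non-proof, even though it is in the spirit of what the cited reference does.

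Two places in your sketch need to be tightened. First, the decomposition ``$u=u_1+u_2$ with $u_1$ supported near $Q_R$'' does not make sense as stated: you cannot control the support of the solution, only of the data. What you want is to truncate the right-hand side, $f=f_1+f_2$, with $f_1$ supported near a cylinder and $f_2$ vanishing on a slightly larger one, and then let $u_1$ solve the equation with data $f_1$ (and zero initial condition) and set $u_2:=u-u_1$; this is exactly the decomposition used in the paper's divergence-form analogue (Propositions \ref{proposition 12.3}--\ref{proposition 12.4}). Second, the assertion that smoothness of $u_2$ on $Q_R$ yields decay for the mean oscillation of $(-\Delta_x)^{1/3}u_2$ elides the key intermediate step: one must first show that $(-\Delta_x)^{1/3}u_2$ again solves the homogeneous equation locally (the analogue of Lemma \ref{lemma 5.3}~$(i)$), so that the interior $S_p$ estimate (Lemma \ref{lemma 5.1}) and the Caccioppoli inequality (Lemma \ref{lemma 5.2}) can be applied to it; and the far-field contribution of the singular integral has to be summed over dyadic annuli, as in Lemma \ref{lemma 4.7}. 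These gaps are fixable, but as written the crucial nonlocal step is only asserted, not established. The $L_2$ step by Fourier/Plancherel and the hypocoercivity bound $\int_s^t|\eta-(t-r)\xi|^2\,dr\gtrsim (t-s)^3|\xi|^2$ is fine, as is the use of the method of continuity for $(ii)$ and the exponential multiplier for $(iii)$.
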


\begin{remark}
The above theorem follows from Theorem  2.6 of \cite{DY_21} and the scaling property of the operator $P_0$ (see Lemma \ref{lemma 4.1}).
\end{remark}

The following lemma implies the uniqueness part of Theorem \ref{theorem 11.3} $(ii)$.
\begin{lemma}
            \label{lemma 12.2}
Let $p > 1$, $\lambda > 0$ be numbers, $T \in (-\infty, \infty]$, and $u \in \bS_p (\bR^{1+2d}_T)$ satisfies $P_0 u + \lambda u = 0$.
Then, $u \equiv 0$.
\end{lemma}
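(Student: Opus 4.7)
The strategy is a duality argument based on the $S_{p'}$-solvability in Theorem \ref{theorem 4.10}(ii), where $p' = p/(p-1)$. Fix an arbitrary $h \in C_0^\infty(\bR^{1+2d}_T)$; since smooth compactly supported functions separate elements of $L_p(\bR^{1+2d}_T)$, it will suffice to show $\int u\, h \, dz = 0$.

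\textbf{Constructing a dual test function.} Because the formal adjoint $P_0^* := -\partial_t + v \cdot D_x - a^{ij}(t) D_{v_i v_j}$ runs backward in time, we convert to a forward equation via the reflection $(t,v) \mapsto (-t,-v)$. Set $\tilde h(t, x, v) := h(-t, x, -v)$, and let $\tilde P_0$ denote the operator with coefficients $\tilde a^{ij}(t) := a^{ij}(-t)$, which still satisfies Assumption \ref{assumption 2.1} with the same $\delta$. By Theorem \ref{theorem 4.10}(ii) (taking $T = \infty$ there), there is a unique $\psi \in S_{p'}(\bR^{1+2d})$ solving $\tilde P_0 \psi + \lambda \psi = \tilde h$. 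Define $\phi(t,x,v) := \psi(-t, x, -v)$. A direct chain-rule computation gives the pointwise identity $P_0^* \phi + \lambda \phi = h$ a.e. When $T < \infty$, the right-hand side $\tilde h$ vanishes on the strip $(-\infty, -T) \times \bR^{2d}$; reapplying the uniqueness in Theorem \ref{theorem 4.10}(ii) on that strip forces $\psi \equiv 0$ on $\{t' < -T\}$, so that $\phi$ vanishes on $\{t > T\}$ and belongs to $S_{p'}(\bR^{1+2d}_T)$.

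\textbf{Pairing with the equation.} The hypothesis $P_0 u + \lambda u = 0$ in $\bH^{-1}_p(\bR^{1+2d}_T)$ means that for every $\varphi \in C_0^\infty(\bR^{1+2d}_T)$ one has $\int u\,(P_0^* \varphi + \lambda \varphi)\, dz = 0$ (integration by parts is legitimate since $u, D_v u \in L_p$ and $\varphi$ is smooth and compactly supported). Approximate $\phi$ by a sequence $\varphi_n \in C_0^\infty(\bR^{1+2d}_T)$ in the $S_{p'}(\bR^{1+2d}_T)$-topology, using a smooth temporal-spatial cutoff combined with mollification in $(t,v)$; in particular $P_0^* \varphi_n + \lambda \varphi_n \to P_0^* \phi + \lambda \phi = h$ in $L_{p'}(\bR^{1+2d}_T)$. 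Passing to the limit and using $u \in L_p$ yields $\int u\, h\, dz = 0$, as required.

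\textbf{Main obstacle.} The conceptual crux is the reflection $(t,v) \mapsto (-t,-v)$, which converts the backward adjoint problem into a forward Kolmogorov problem for $\tilde P_0$ to which Theorem \ref{theorem 4.10}(ii) applies directly; the causality/uniqueness from that very theorem simultaneously guarantees $\phi$ is supported in $\{t \le T\}$ and thus serves as a legitimate test function. The technical nuisance is the approximation of $\phi$ by $C_0^\infty$ functions in the $S_{p'}$-sense: one must avoid mollifying in $x$ (because the transport term $v\cdot D_x$ has velocity-dependent coefficients and does not commute with $x$-mollification), so mollification in $(t,v)$ plus a cutoff is used, the transport regularity being encoded in the finiteness of $\|(\partial_t - v \cdot D_x)\phi\|_{L_{p'}}$.
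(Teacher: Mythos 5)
Your strategy (test against a solution of the time- and velocity-reversed dual problem, then pass to the limit) is genuinely different from the paper's, which is more elementary: the paper mollifies $u$ itself in $(x,v)$ with anisotropic scales $\varepsilon^{1/2}$ in $x$ and $\varepsilon$ in $v$, shows $u_{(\varepsilon)}\in S_p(\bR^{1+2d}_T)$ satisfies $P_0 u_{(\varepsilon)}+\lambda u_{(\varepsilon)}=g_\varepsilon$ with $\|g_\varepsilon\|_{L_p}\le N\varepsilon^{1/2}\|u\|_{L_p}$, applies the nondivergence $S_p$ estimate of Theorem \ref{theorem 4.10} to get $\lambda\|u_{(\varepsilon)}\|\le N\varepsilon^{1/2}\|u\|$, and lets $\varepsilon\to0$. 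No dual problem, no reflection, no density-of-test-functions argument is needed. Your reflection trick converting $P_0^*$ to a forward operator and the causality observation forcing $\phi=0$ for $t>T$ are correct and nicely handle the case $T<\infty$.

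However, the ``main obstacle'' paragraph contains a genuine error that leaves a gap. You assert that $v\cdot D_x$ ``does not commute with $x$-mollification,'' and therefore propose mollifying in $(t,v)$ only. In fact the opposite is true: since the coefficient $v$ is independent of $x$, the operator $v\cdot D_x$ \emph{does} commute with $x$-translation (hence with $x$-mollification); it is $v$-mollification that fails to commute, because translating $v$ changes the coefficient. The commutator from $v$-mollification is a term proportional to $\sigma\int w\cdot D_x\phi(t,x,v-\sigma w)\eta(w)\,dw$, which involves $D_x\phi$ --- a quantity not controlled by $\phi\in S_{p'}$. Mollifying only in $(t,v)$ therefore does not give convergence in the graph norm of $\partial_t-v\cdot D_x$, and the step ``$P_0^*\varphi_n+\lambda\varphi_n\to h$ in $L_{p'}$'' is unjustified as written. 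The correct fix (which the paper uses both in this lemma and in the proof of Lemma \ref{lemma 4.15}) is to mollify in $x$ \emph{as well}, at a slower anisotropic rate (e.g.\ $\varepsilon^{1/2}$ in $x$ and $\varepsilon$ in $v$), so that the $D_x$ hitting the integrand can be integrated by parts onto the mollifier, producing an error of order $\varepsilon^{1/2}$. Alternatively, within your duality framework the cleanest route is to invoke Lemma \ref{lemma 4.15} directly after the time/velocity reflection: it asserts $(P_0+\lambda)C^\infty_0(\bR^{1+2d})$ is dense in $L_{p'}(\bR^{1+2d})$, hence $(P_0^*+\lambda)C^\infty_0(\bR^{1+2d})$ is dense in $L_{p'}(\bR^{1+2d})$, which gives the approximating sequence $\varphi_n$ without constructing $\phi$ (a time-translation and cutoff is still needed to ensure $\operatorname{supp}\varphi_n\subset\{t<T\}$ when $T<\infty$, and your observation that $\phi$ vanishes for $t>T$ is useful for that).
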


\begin{proof}
Let $\eta  = \eta (x, v) \in C^{\infty}_0 (B_1 \times B_1)$ be a function with the unit integral.
For $h \in L_{1, \text{loc}} (\bR^{2d})$,
we denote
\begin{equation*}
h_{(\varepsilon)} (x, v) =  \varepsilon^{-(3/2)d} \int h (x', v') \eta ((x-x')/\varepsilon^{1/2}, (v -v')/\varepsilon) \, dx'dv'.
\end{equation*}
Then, $u$ satisfies the equation
\begin{equation}
                \label{12.4.1}
    P_0 u_{(\varepsilon)} + \lambda u_{(\varepsilon)} =
     g_{\varepsilon},
\end{equation}
where
\begin{equation*}
    g_{\varepsilon} (z) =
    \varepsilon^{1/2} \int u (t, x - \varepsilon^{1/2} x', v - \varepsilon v') v' \cdot D_x \eta (x', v') \,dx'dv'.
\end{equation*}
Note that by the Minkowski inequality,
\begin{equation}
                \label{12.4.3}
    \|g_{\varepsilon}\|_{ L_p (\bR^{1+2d}_T)  }
    \le N \varepsilon^{1/2} \|u\|_{ L_p (\bR^{1+2d}_T)  }.
\end{equation}
Then, it follows from \eqref{12.4.1} that
$
(\partial_t - v \cdot D_x) u_{(\varepsilon)} \in L_{p}(\bR^{1+2d}_T),
$
and, therefore,
$
    u_{(\varepsilon)} \in
    S_{p}(\bR^{1+2d}_T).
$
Hence, by Theorem \ref{theorem 4.10}
and \eqref{12.4.3},
$$
  \lambda \| u_{(\varepsilon)}\|_{ L_{p}(\bR^{1+2d}_T) }
   \le  N \| g_{\varepsilon}\|_{ L_{p}(\bR^{1+2d}_T) }
    \le N \varepsilon^{1/2}
    \| u\|_{ L_{p}(\bR^{1+2d}_T) }.
 $$
Taking the limit as $\varepsilon \to 0$ in the above inequality, we prove the assertion.
\end{proof}

The following result is needed for the duality argument in the proof of Theorem \ref{theorem 11.3}. For the proof, see Lemma 5.12 of \cite{DY_21}.

\begin{lemma}
            \label{lemma 4.15}
For any numbers $\lambda \ge 0$ and $p >1$,
the set $(P_0 + \lambda) C^{\infty}_0 (\bR^{1+2d})$
is dense in $L_p (\bR^{1+2d})$.
\end{lemma}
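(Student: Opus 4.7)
The plan is to argue by duality via the Hahn--Banach theorem. Density of $(P_0 + \lambda) C^{\infty}_0(\bR^{1+2d})$ in $L_p(\bR^{1+2d})$ is equivalent to showing that every $g \in L_{p'}(\bR^{1+2d})$, with $p' = p/(p-1)$, satisfying
\begin{equation*}
\int_{\bR^{1+2d}} g \, (P_0 + \lambda) \phi \, dz = 0 \qquad \text{for every } \phi \in C^{\infty}_0(\bR^{1+2d})
\end{equation*}
must vanish identically. Such a $g$ is a distributional solution of the formal adjoint equation $-\partial_t g + v \cdot D_x g - a^{ij}(t) D_{v_i v_j} g + \lambda g = 0$. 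Setting $\tilde g(t,x,v) := g(-t, x, -v)$, a direct change of variables shows that $\tilde g \in L_{p'}(\bR^{1+2d})$ solves
\begin{equation*}
\widetilde{P}_0 \tilde g + \lambda \tilde g = 0, \qquad \widetilde{P}_0 := \partial_t - v \cdot D_x - \tilde a^{ij}(t) D_{v_i v_j}, \qquad \tilde a^{ij}(t) := a^{ij}(-t),
\end{equation*}
in the sense of distributions. Since $\tilde a^{ij}$ still satisfies Assumption \ref{assumption 2.1}, Theorem \ref{theorem 4.10} applies to $\widetilde{P}_0$ in the same way as to $P_0$.

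Next, I would mollify $\tilde g$ in $x$ and $v$ with the same anisotropic mollifier used in the proof of Lemma \ref{lemma 12.2}, obtaining $\tilde g_{(\varepsilon)}$. The operators $\partial_t$ and $\tilde a^{ij}(t) D_{v_i v_j}$ commute with this mollification, so only the transport term $v \cdot D_x$ produces a commutator, and exactly as in Lemma \ref{lemma 12.2} the latter satisfies
\begin{equation*}
\widetilde{P}_0 \tilde g_{(\varepsilon)} + \lambda \tilde g_{(\varepsilon)} = h_\varepsilon, \qquad \|h_\varepsilon\|_{L_{p'}(\bR^{1+2d})} \le N \varepsilon^{1/2} \|\tilde g\|_{L_{p'}(\bR^{1+2d})}.
\end{equation*}
Since $\tilde g_{(\varepsilon)}$ is smooth in $x$ and $v$ and the displayed equation forces $(\partial_t - v \cdot D_x) \tilde g_{(\varepsilon)} \in L_{p'}$, one has $\tilde g_{(\varepsilon)} \in S_{p'}(\bR^{1+2d})$, and the a priori estimate from Theorem \ref{theorem 4.10} $(i)$ applied to $\tilde g_{(\varepsilon)}$ yields
\begin{equation*}
\lambda \|\tilde g_{(\varepsilon)}\|_{L_{p'}} + \|D^2_v \tilde g_{(\varepsilon)}\|_{L_{p'}} \le N \|h_\varepsilon\|_{L_{p'}} \le N \varepsilon^{1/2} \|\tilde g\|_{L_{p'}}.
\end{equation*}

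Letting $\varepsilon \to 0$, one has $\tilde g_{(\varepsilon)} \to \tilde g$ in $L_{p'}$. When $\lambda > 0$ the first term on the left forces $\tilde g \equiv 0$ directly. When $\lambda = 0$ one concludes instead that $D^2_v \tilde g \equiv 0$ in the sense of distributions on $\bR^{1+2d}$; a Fubini argument then shows that $\tilde g(t, x, \cdot)$ is an affine distribution in $v$ for a.e.\ $(t, x)$, and the only affine function belonging to $L_{p'}(\bR^d)$ is zero, so $\tilde g \equiv 0$ as well. Reflecting back yields $g \equiv 0$, completing the argument. The main technical ingredient is the commutator bound on $h_\varepsilon$, which is essentially identical to the one already carried out in Lemma \ref{lemma 12.2}; the genuinely new subtlety is the $\lambda = 0$ case, where the zeroth-order term of Theorem \ref{theorem 4.10} $(i)$ is useless and the argument is closed instead through the $D^2_v$ control together with the triviality of $L_{p'}$ affine functions on $\bR^d$.
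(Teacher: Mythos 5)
Your proof is correct and follows what is almost certainly the intended route: by Hahn--Banach, density reduces to a uniqueness statement for the formal adjoint, which (after the reflection $t \to -t$, $v \to -v$, equivalently $t \to -t$, $x \to -x$ as used in the proof of Lemma~\ref{lemma 11.4}) becomes again a model KFP operator to which the mollification-commutator technique of Lemma~\ref{lemma 12.2} and the $S_{p'}$ a priori estimate of Theorem~\ref{theorem 4.10} can be applied. The genuinely delicate point, which you identify and handle correctly, is $\lambda = 0$: the zeroth-order term in the a priori estimate degenerates, and one must instead extract $D^2_v \tilde g = 0$ (or, just as well, $(-\Delta_x)^{1/3}\tilde g = 0$) in the distributional limit and then use that a nonzero affine function of $v$ cannot lie in $L_{p'}(\bR^d)$. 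All the intermediate verifications (the change of variables correctly produces $\widetilde P_0 = \partial_t - v\cdot D_x - a^{ij}(-t)D_{v_iv_j}$ with coefficients still satisfying Assumption~\ref{assumption 2.1}; the anisotropic mollifier $\eta_\varepsilon(x,v)=\varepsilon^{-3d/2}\eta(x/\varepsilon^{1/2},v/\varepsilon)$ commutes with $\partial_t$ and $a^{ij}(t)D_{v_iv_j}$ and produces an $O(\varepsilon^{1/2})$ commutator from $v\cdot D_x$; after mollification $\tilde g_{(\varepsilon)}\in S_{p'}(\bR^{1+2d})$ because all the $x,v$-derivatives land on $\eta_\varepsilon$ and the equation supplies $(\partial_t - v\cdot D_x)\tilde g_{(\varepsilon)}\in L_{p'}$) check out.
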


Here is the a priori estimate \eqref{11.3.3} in the case when $g \equiv 0$ and $\vec f$ is smooth and compactly supported.
\begin{lemma}
            \label{lemma 11.4}
Let $\lambda > 0, p > 1$ be numbers, and $\vec f \in C^{\infty}_0 (\bR^{1+2d})$.
Let $u$ be the unique solution in $S_p (\bR^{1+2d})$ to Eq. \eqref{11.3.1} with $g \equiv 0$. Then, one has
 \begin{align*}
   & \lambda^{1/2}  \|u\|_{ L_p (\bR^{1+2d})  }	
    +\|(-\Delta_x)^{1/6} u\|_{ L_p (\bR^{1+2d})  }	\\
  &  + \|D_v u\|_{ L_p (\bR^{1+2d})  }
	\le N (d, \delta, p) \|\vec f\|_{ L_p (\bR^{1+2d})  }.
 \end{align*}	
\end{lemma}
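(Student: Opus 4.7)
The plan is a duality argument reducing everything to the $S_p$-theory of Theorem \ref{theorem 4.10}. Under the change of variables $(t,x,v)\mapsto(-t,-x,v)$, the formal adjoint $P_0^*=-\partial_t+v\cdot D_x-a^{ij}(t)D_{v_iv_j}$ is conjugate to a KFP operator of the same form with coefficients $a^{ij}(-t)$ still satisfying Assumption \ref{assumption 2.1}; hence Theorem \ref{theorem 4.10} produces, for any $\phi\in C_0^\infty(\bR^{1+2d})$, a unique $w\in S_{p'}(\bR^{1+2d})$ solving $(P_0^*+\lambda)w=\phi$ and enjoying
\[
\lambda\|w\|_{p'}+\lambda^{1/2}\|D_vw\|_{p'}+\|D_v^2w\|_{p'}+\|(-\Delta_x)^{1/3}w\|_{p'}+\|D_v(-\Delta_x)^{1/6}w\|_{p'}\le N\|\phi\|_{p'}.
\]
Combining this with the integration-by-parts identity $((P_0+\lambda)u,w)=(u,(P_0^*+\lambda)w)$ (legal because $u\in S_p$ and $w\in S_{p'}$) together with the density of $C_0^\infty$ in $L_{p'}$, each of the three desired bounds will follow by duality.

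For $\lambda^{1/2}\|u\|_p$, pair $u$ with a unit-norm $\phi\in C_0^\infty$ and the associated $w$:
\[
\int u\,\phi\,dz=\int u\,(P_0^*+\lambda)w\,dz=\int\bigl((P_0+\lambda)u\bigr)w\,dz=\int(\div\vec f)\,w\,dz=-\int\vec f\cdot D_vw\,dz,
\]
so $|\int u\,\phi\,dz|\le\|\vec f\|_p\|D_vw\|_{p'}\le N\lambda^{-1/2}\|\vec f\|_p$. For $\|(-\Delta_x)^{1/6}u\|_p$, the crucial point is that since $a^{ij}$ is independent of $x$, the Fourier multiplier $(-\Delta_x)^{1/6}$ commutes with $P_0^*$; setting $W:=(-\Delta_x)^{1/6}w$ one gets $(P_0^*+\lambda)W=(-\Delta_x)^{1/6}\phi$ and
\[
\int(-\Delta_x)^{1/6}u\cdot\phi\,dz=\int u\cdot(-\Delta_x)^{1/6}\phi\,dz=\int u\,(P_0^*+\lambda)W\,dz=-\int\vec f\cdot D_vW\,dz,
\]
which is bounded by $\|\vec f\|_p\|D_v(-\Delta_x)^{1/6}w\|_{p'}\le N\|\vec f\|_p\|\phi\|_{p'}$. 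Note that $(-\Delta_x)^{1/6}u\in L_p$ by Corollary \ref{corollary 2.2}, so the duality pairing is meaningful.

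The main obstacle is the $\|D_vu\|_p$-estimate, where a commutator intervenes. For $\vec\psi\in C_0^\infty$, solve $(P_0^*+\lambda)\tilde w_i=\psi_i$ for each $i$ and set $W:=\sum_iD_{v_i}\tilde w_i$. Substituting $\psi_i=(P_0^*+\lambda)\tilde w_i$ into $\int D_vu\cdot\vec\psi\,dz=-\int u\,\div\vec\psi\,dz$ and using the commutator identity $[D_{v_i},P_0^*]=D_{x_i}$ to compute $\div((P_0^*+\lambda)\tilde w)=(P_0^*+\lambda)W+\sum_iD_{x_i}\tilde w_i$ yields
\[
\int D_vu\cdot\vec\psi\,dz=-\int u\,(P_0^*+\lambda)W\,dz-\sum_i\int u\,D_{x_i}\tilde w_i\,dz=\int\vec f\cdot D_vW\,dz-\sum_i\int u\,D_{x_i}\tilde w_i\,dz.
\]
The first summand is controlled by $\|\vec f\|_p\|D_v^2\tilde w\|_{p'}\le N\|\vec f\|_p\|\vec\psi\|_{p'}$ via Theorem \ref{theorem 4.10}. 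For each cross term, $L_{p'}$-boundedness of the Riesz transforms $R_i=D_{x_i}(-\Delta_x)^{-1/2}$ together with self-adjointness of $(-\Delta_x)^{1/6}$ give
\[
\Bigl|\int u\,D_{x_i}\tilde w_i\,dz\Bigr|=\Bigl|\int(-\Delta_x)^{1/6}u\cdot R_i(-\Delta_x)^{1/3}\tilde w_i\,dz\Bigr|\le N\|(-\Delta_x)^{1/6}u\|_p\|\psi_i\|_{p'},
\]
again by Theorem \ref{theorem 4.10}; inserting the $\|(-\Delta_x)^{1/6}u\|_p$-bound from the previous paragraph closes the argument. The commutator $[D_{v_i},P_0^*]=D_{x_i}$ is the true obstruction, producing an unwanted $D_x\tilde w$ that cannot be absorbed into $D_v$-level dual estimates; the rescue is that Theorem \ref{theorem 4.10} provides a sharp $(-\Delta_x)^{1/3}$-bound on $\tilde w$, which via Riesz transforms meshes exactly with the bootstrapped $(-\Delta_x)^{1/6}u$-bound to supply the missing half-order $x$-derivative. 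All integrations by parts are justified by mollification, in the spirit of Lemma \ref{lemma 12.2}.
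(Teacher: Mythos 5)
Your proof is correct and is essentially the paper's argument in dual form: you test against an arbitrary $\phi$ (or $\vec\psi$) and then solve the adjoint equation $(P_0^*+\lambda)w=\phi$, whereas the paper tests directly against $(P_0^*+\lambda)U$ for $U\in C_0^\infty$ and invokes density of the range (Lemma \ref{lemma 4.15}). The substance is identical: the three pairings are the same, the commutator $[D_{v_i},P_0^*]=D_{x_i}$ produces the same cross term, and the Riesz-transform rewrite $\int u\,D_{x_i}\tilde w_i=\int(-\Delta_x)^{1/6}u\cdot R_i(-\Delta_x)^{1/3}\tilde w_i$ that absorbs it into the already-established $(-\Delta_x)^{1/6}u$-bound is precisely the paper's treatment of the term $J_2$.

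One citation needs repair: invoking Corollary \ref{corollary 2.2} for $(-\Delta_x)^{1/6}u\in L_p$ is circular, since that corollary is deduced from Theorem \ref{theorem 11.3}, which in turn rests on the present Lemma \ref{lemma 11.4}. The fact is immediate by other means in this setting: $u\in S_p(\bR^{1+2d})$ solves $P_0u+\lambda u=\div\vec f$ with $\div\vec f\in L_p$, so Theorem \ref{theorem 4.10} gives $(-\Delta_x)^{1/3}u\in L_p$, and interpolation (Lemma \ref{lemma A.5}) yields $(-\Delta_x)^{1/6}u\in L_p$. (The paper instead differentiates the equation in $x$ and notes $D_x^\alpha u\in L_p$ for every $\alpha$.) Replacing the reference removes the circularity; nothing else in the argument is affected.
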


\begin{proof}
Proof by duality argument.
We denote $q  = p/(p-1)$ and fix some $U \in C^{\infty}_0 (\bR^{1+2d})$.

 \textit{Estimate of $(-\Delta_x)^{1/6} u$.}
Note that for any multi-index $\alpha$, one has $P_0 D_x^{\alpha} u  = \text{div}_v  D_x^{\alpha} \vec f$,
and hence, by Theorem \ref{theorem 4.10}, $D_x^{\alpha} u \in L_p (\bR^{1+2d})$.
In addition, note that by \eqref{eq1.15} for any $U \in C^{\infty}_0 (\bR^{1+2d})$, $(-\Delta_x)^{1/6} U \in C^{\infty}_{\text{loc}} (\bR^{1+2d}) \cap L_{1} (\bR^{1+2d})$.
Then, by this and  integration by parts, we have
$$
    I = \int \big((-\Delta_x)^{1/6} u \big)
    (-\partial_t U + v \cdot D_x U - a^{i j} (t) D_{v_i v_j} U + \lambda U) \, dz
$$
 $$
    = \int    \big((\partial_t  - v \cdot D_x  - a^{i j} (t) D_{v_i v_j}  + \lambda)u\big) ((-\Delta_x)^{1/6} U) \, dz
$$
 $$
    =
    \int  (\text{div}_v \vec f) ((-\Delta_x)^{1/6} U)  \, dz
    = -  \int  \vec f \cdot D_v (-\Delta_x)^{1/6} U \, dz.
$$
By H\"older's inequality, Theorem \ref{theorem 4.10}, and the change of variables $t \to -t$, $x \to -x$,
\begin{equation}
                \label{11.4.1}
    |I|
    \le
   N \|\vec f\|_{ L_p (\bR^{1+2d}) }
    \|-\partial_t U + v \cdot D_x U  - a^{i j}  D_{v_i v_j} U + \lambda U\|_{ L_q (\bR^{1+2d}) }.
 \end{equation}
Furthermore, by Lemma \ref{lemma 4.15}  and the aforementioned change of variables,
$(-\partial_t  + v \cdot D_x - a^{i j} D_{v_i v_j} + \lambda) C^{\infty}_0 (\bR^{1+2d})$ is dense in $L_q (\bR^{1+2d})$.
This combined with \eqref{11.4.1} implies the desired estimate for $(-\Delta_x)^{1/6} u$.

\textit{Estimate of $D_v u$}.
Integrating by parts gives
\begin{align*}
  &\int   ( D_v u )   (-\partial_t U + v \cdot D_x U - a^{i j} (t) D_{v_i v_j} U + \lambda U) \, dz\\
 &
	  = \int  (D_v^2 U)  \vec f \,  dz - \int u D_x U \, dz
	  =: J_1 + J_2.
  \end{align*}
  As before, it suffices
  to show that
  $|J_1| + |J_2|$ is dominated
  by the right-hand side of
  \eqref{11.4.1}.
  By H\"older's inequality,
   Theorem \ref{theorem 4.10}, and the same change of variables,
   we get
 \begin{align*}
  |J_1|
    &\le
     \|\vec f\|_{ L_p (\bR^{1+2d}) }
    \|D_{v}^2 U\|_{ L_q (\bR^{1+2d}) }\\
  &
    \le N  \|\vec f\|_{ L_p (\bR^{1+2d}) }
    \|-\partial_t U + v \cdot D_x U - a^{i j} (t) D_{v_i v_j} U + \lambda U\|_{ L_q (\bR^{1+2d}) }.
   \end{align*}
  Next, note that
  $$
    J_2 = - \int ((-\Delta_x)^{1/6} u) \, \cR_x (-\Delta_x)^{1/3} U \, dz,
  $$
  where $\cR_x$ is the Riesz transform in the $x$ variable.
  Then, by the $L_p$ estimate of $(-\Delta_x)^{1/6} u$, the $L_{q}$ boundedness of the Riesz transform, and Theorem \ref{theorem 4.10},
  we obtain
  \begin{align*}
    |J_2| &\le N  \|(-\Delta_x)^{1/6} u\|_{ L_{p} (\bR^{1+2d}) } \|(-\Delta_x)^{1/3} U\|_{ L_q (\bR^{1+2d}) }\\
  &
    \le N \|\vec f\|_{ L_p (\bR^{1+2d}) }
    \|-\partial_t U + v \cdot D_x U - a^{i j}  D_{v_i v_j} U + \lambda U\|_{ L_q (\bR^{1+2d})}.
   \end{align*}
   The estimate is proved.

 \textit{Estimate of $u$}.
As  above, we consider
$$
   \cI: =  \int u (-\partial_t U + v \cdot D_x U - a^{i j} D_{v_i v_j} U + \lambda U) \, dz
   = - \int \vec f \cdot D_v U \, dz.
 $$
 Then, by H\"older's inequality and
  Theorem \ref{theorem 4.10},
\begin{align*}
        | \cI| & \le \|\vec f \|_{ L_p (\bR^{1+2d})} \|D_v U\|_{ L_q (\bR^{1+2d})  }\\
&
    \le N \lambda^{-1/2}
    \|\vec f \|_{ L_p (\bR^{1+2d})  }
    \|-\partial_t U + v \cdot D_x U -  a^{i j} D_{v_i v_j}  U + \lambda U\|_{ L_q (\bR^{1+2d})  }.
 \end{align*}
 This implies the desired estimate.
\end{proof}

\begin{proof}[Proof of Theorem \ref{theorem 11.3}.]
By Remark \ref{remark 2.2}, we only need to prove the assertion  $(i)$.

$(i)$ The uniqueness follows from  Lemma \ref{lemma 12.2}.
To prove the existence, let $u_1 \in S_p (\bR^{1+2d}_T)$  be the unique solution to the equation (see Theorem \ref{theorem 4.10})
\begin{equation}
                \label{eq11.3.3}
        P_0 u_1 + \lambda u_1 = g.
\end{equation}
By the same  theorem  and the interpolation inequality (see Lemma \ref{lemma A.5}),
\begin{align*}
&    \lambda \|u_1\|_{ L_p (\bR^{1+2d}_T) } + \lambda^{1/2}\|D_v u_1\|_{ L_p (\bR^{1+2d}_T) } + \lambda^{1/2}\|(-\Delta_x)^{1/6} u_1\|_{ L_p (\bR^{1+2d}_T) }\\
&\le N \|g\|_{ L_p (\bR^{1+2d}_T) }.
\end{align*}
Subtracting Eq. \eqref{eq11.3.3} from Eq. \eqref{11.3.1}, we may assume that $g \equiv 0$.
We will consider the cases $T = \infty$ and $T < \infty$ separately.

\textit{Case $T = \infty$.}
We take a sequence of functions
$\vec f_n \in C^{\infty}_0 (\bR^{1+2d})$
such that $f_n \to f$  in $L_p (\bR^{1+2d})$.
By Theorem \ref{theorem 4.10}, there exists
a unique solution $u_n \in S_p (\bR^{1+2d})$ to the equation
\begin{equation}
                \label{11.3.2}
    (P_0 + \lambda) u_n  = \text{div}_v \vec f_n.
\end{equation}
By  Lemma \ref{lemma 11.4}, we have
    \begin{align}
                            \label{11.3.4}
   & \lambda^{1/2}  \|u_n\|_{ L_p (\bR^{1+2d})  }
   +  \|D_v u_n\|_{ L_p (\bR^{1+2d})  } 	
+ \|(-\Delta_x)^{1/6} u_n\|_{ L_p (\bR^{1+2d})  }	\\
  &
	\le N (d, \delta, p)
	\|\vec f_n\|_{ L_p (\bR^{1+2d})  },\notag\\
&
\|\partial_t u_n - v \cdot D_x u_n\|_{ \bH^{-1}_p (\bR^{1+2d})  } \le N (d, \delta, p,\lambda)
	\|\vec f_n\|_{ L_p (\bR^{1+2d})  }\notag.
 \end{align}
Furthermore, by the same lemma,
$$
u_n, n \ge 1, \quad (-\Delta_x)^{1/6} u_n,\, n \ge 1
$$
are Cauchy sequences in $\bS_p (\bR^{1+2d})$ and $L_p (\bR^{1+2d})$ , respectively.
Hence, there exists a function $u\in \bS_p(\bR^{1+2d})$ such that $u_n$,  $(-\Delta_x)^{1/6} u_n$ converge to $u$ and $(-\Delta_x)^{1/6} u$, respectively.
Passing to the limit in Eq. \eqref{11.3.2} and \eqref{11.3.4}, we prove the existence and the inequality \eqref{11.3.3}.

\textit{Case $T < \infty$.} \label{T finite}
Let $\widetilde u \in \bS_p (\bR^{1+2d})$ be the unique solution to the equation
$$
    P_0 \widetilde u + \lambda \widetilde u = \text{div}_v \vec f 1_{t < T}.
$$
We conclude that $u: = \widetilde u$ is a  solution of class $\bS_p (\bR^{1+2d}_T)$ to Eq. \eqref{11.3.1},
and the estimate \eqref{11.3.3} holds.
The theorem is proved.
\end{proof}

\mysection{Mixed-norm estimate for the model equation}
											\label{section 3}

In this section, we consider the case when the coefficients $a^{i j}$ are independent of $x, v$,  and the lower-order terms are absent.
The goal is  to prove the a  priori  estimates in the weighted mixed-norm  spaces by  establishing a mean oscillation estimate of $(-\Delta_x)^{1/6} u$, $\lambda^{1/2} u$, and $D_v u$ for $u \in \bS_p (\bR^{1+2d}_T)$  solving  Eq. \eqref{11.3.1}. To this end,  we split $u$ into a $P_0+\lambda$-caloric part and the remainder. To bound the former, we use the method of Section 5 of \cite{DY_21}. The remainder is handled by using a localized version of the $\bS_p$ estimate in Theorem \ref{theorem 11.3} (see Lemma \ref{lemma 12.1}).

\begin{theorem}
            \label{theorem 12.1}
Invoke the assumptions of  Theorem \ref{theorem 1.1}  and assume, additionally, $b \equiv 0 \equiv \overline{b}$, $c \equiv 0$.
Let $u \in \bS_{p, r_1, \ldots, r_d, q} (\bR^{1+2d}_T, w)$, $\vec f , g\in L_{p, r_1, \ldots, r_d, q} (\bR^{1+2d}_T, w)$ be functions such that
$$
    P_0 u  + \lambda u = \div \vec f +g.
$$
Then, for any $\lambda > 0$, the estimate \eqref{2.1.2} is valid.
Furthermore, in the case when $g \equiv 0$ and $\lambda \equiv 0$, \eqref{2.1.2} also holds.
In addition, the same inequalities hold with $\bS_{p; r_1,\ldots, r_d} (\bR^{1+2d}_T, |x|^\alpha \prod_{i = 1}^d w_i (v_i))$, $L_{p; r_1,\ldots, r_d} (\bR^{1+2d}_T, |x|^\alpha \prod_{i = 1}^d w_i (v_i))$  in place of
$\bS_{p, r_1,\ldots, r_d, q} (\bR^{1+2d}_T, w)$,
$L_{p, r_1,\ldots, r_d, q} (\bR^{1+2d}_T, w)$, respectively,
and with \\
$N = N (d, \delta,  p, r_1, \ldots, r_d,  \alpha, K)$.
\end{theorem}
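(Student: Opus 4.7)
The plan is to follow the strategy outlined in the introduction: establish a mean oscillation estimate of $(-\Delta_x)^{1/6}u$, $\lambda^{1/2}u$, and $D_v u$ on the kinetic cylinders $Q_r(z_0)$, and then pass to mixed-norm/weighted norms via a Fefferman--Stein type theorem (Theorem \ref{theorem A.4}). Since $a^{ij}$ depends only on $t$, the operator $P_0+\lambda$ enjoys the same scaling and translation symmetries used in \cite{DY_21}, which is what makes the mean-oscillation step feasible at this stage.

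First I would fix a cylinder $Q_r(z_0)$ with $0<r\le 1$, and split $u=u_1+u_2$ on a larger concentric cylinder $Q_{\kappa r}(z_0)$ (with some fixed $\kappa>1$) in the standard way: let $u_2\in\bS_p$ be the unique solution (by Theorem \ref{theorem 11.3}) to
\begin{equation*}
P_0 u_2+\lambda u_2 = \div(\vec f\,1_{Q_{\kappa r}(z_0)})+g\,1_{Q_{\kappa r}(z_0)},
\end{equation*}
on $\bR^{1+2d}_T$ (zero initial data being automatic since we may cut off in $t$), and set $u_1=u-u_2$ so that $P_0 u_1+\lambda u_1=\div\vec f+g - \div(\vec f\,1)-g\,1$ vanishes on $Q_{\kappa r}(z_0)$. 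The key $L_p$-estimate from Theorem \ref{theorem 11.3} gives
\begin{equation*}
\lambda^{1/2}\|u_2\| + \|D_v u_2\| + \|(-\Delta_x)^{1/6}u_2\| \le N\bigl(\|\vec f\,1_{Q_{\kappa r}(z_0)}\| + \lambda^{-1/2}\|g\,1_{Q_{\kappa r}(z_0)}\|\bigr),
\end{equation*}
which is a localized version controlling $u_2$ by the averages of $|\vec f|$ and $\lambda^{-1/2}|g|$ over the enlarged cylinder. For $u_1$, which is $(P_0+\lambda)$-caloric on $Q_{\kappa r}(z_0)$, I would reproduce the scheme of Section~5 of \cite{DY_21}: use the scaling property of $P_0$ (Lemma \ref{lemma 4.1}) to reduce to $r=1$, apply interior $S_p$-regularity (Theorem \ref{theorem 4.10}) to bootstrap control of all desired derivatives, and then pointwise formulas for $(-\Delta_x)^{1/6}$ (via \eqref{eq1.20}) together with the $P_0$-caloric identity to get Campanato-type decay
\begin{equation*}
\fint_{Q_{r}(z_0)}\bigl|H u_1 - (H u_1)_{Q_r(z_0)}\bigr|\,dz \le N\mu^\sigma \bigl(\fint_{Q_{\mu r}(z_0)}|H u_1|^p\,dz\bigr)^{1/p}
\end{equation*}
for $H\in\{\lambda^{1/2}\mathrm{Id},\,D_v,\,(-\Delta_x)^{1/6}\}$ with $\sigma>0$ and any $\mu\in(0,1)$.

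Combining the two estimates in the standard way yields a pointwise bound for the sharp function
\begin{equation*}
(Hu)^{\#}_T(z_0) \le N\mu^{\sigma}\bigl[\cM_T(|Hu|^{p_0})(z_0)\bigr]^{1/p_0}+ N\mu^{-N}\bigl[\cM_T(|\vec f|^{p_0}+\lambda^{-p_0/2}|g|^{p_0})(z_0)\bigr]^{1/p_0}
\end{equation*}
for some $p_0<\min(p,r_1,\ldots,r_d,q)$; this is where having freedom to pick a small exponent $p_0$ is essential, so that the Hardy--Littlewood maximal operator is bounded on each of the weighted mixed-norm spaces under hypothesis \eqref{eq1.0}. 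Applying the Fefferman--Stein and Hardy--Littlewood inequalities in the weighted mixed-norm setting (Theorem \ref{theorem A.4}), choosing $\mu$ small enough, and absorbing $\mu^{\sigma}\|Hu\|$ into the left-hand side (which is finite by assumption $u\in\bS_{p,r_1,\ldots,r_d,q}$) gives \eqref{2.1.2}. The $g\equiv 0$, $\lambda=0$ case follows by the same argument because the $\lambda$-dependence only enters through the scaling of the lower-order $u$-term. The $|x|^{\alpha}$-weighted variant is identical once we note that $|x|^\alpha$ is an $A_p$-weight in the $x$-variable (Remark \ref{rem2.1}) and Theorem \ref{theorem A.4} covers this case.

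The main obstacle I anticipate is the caloric decay estimate for $u_1$: because the geometry of $Q_r(z_0)$ is genuinely anisotropic (with $x$ scaling like $r^3$ and involving the drift shift $(t-t_0)v_0$), oscillation control of $(-\Delta_x)^{1/6} u_1$ requires both a careful use of the pointwise formula \eqref{eq1.20} on the nonlocal operator (splitting into a local piece handled by interior regularity and a tail piece controlled by the enlarged average) and a delicate commutation of $(-\Delta_x)^{1/6}$ with the drift--translation used for scaling. A secondary technical point is ensuring the decomposition $u=u_1+u_2$ is well-defined in $\bS_p$ globally and that the $P_0$-caloric identity for $u_1$ is meaningful in the distribution sense inside $Q_{\kappa r}(z_0)$; this is handled by the uniqueness Lemma~\ref{lemma 12.2} and the density Lemma~\ref{lemma 4.15}.
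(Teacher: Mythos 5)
Your proposal correctly identifies the high-level architecture (split $u$ into a ``caloric'' part and a ``remainder'', mean-oscillation estimate plus Fefferman--Stein), but there is a genuine gap in the decomposition you choose, and it is precisely the gap that the paper's slightly different decomposition is designed to avoid.

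You cut off the right-hand side by the \emph{indicator} $1_{Q_{\kappa r}(z_0)}$, which is bounded in all variables including $x$, so your $u_1=u-u_2$ is $(P_0+\lambda)$-caloric only on a cylinder that is bounded in $x$. The paper instead takes a cutoff $\phi=\phi(t,v)$ that does \emph{not} depend on $x$, so its ``caloric'' part $u_h$ satisfies $(P_0+\lambda)u_h=0$ on a slab $(t_0-(\nu r)^2,t_0)\times\bR^d\times B_{\nu r}(v_0)$ that is infinite in $x$. This distinction is not cosmetic: $(-\Delta_x)^{1/6}$ is nonlocal in $x$, and Lemma~\ref{lemma 5.3}$(i)$ (the key step that lets you treat $(-\Delta_x)^{1/6}u_h$ as itself a caloric function and apply interior $S_p$-regularity to it) requires $u_h$ to be caloric on an $x$-slab so that $(-\Delta_x)^{1/6}$ commutes with $P_0$ in the relevant region. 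With your bounded-in-$x$ cutoff, $(-\Delta_x)^{1/6}u_1$ picks up contributions from far-away $x$ where $u_1$ is not caloric, so it does not solve the homogeneous equation anywhere, and the Campanato-type decay you postulate for $H=(-\Delta_x)^{1/6}$ simply does not hold (also note that, as written, your decay inequality has the radii reversed: with $\mu\in(0,1)$ the right side should average over the \emph{larger} cylinder). You flag ``a delicate commutation'' as a secondary concern, but with your cutoff the commutation cannot be set up at all; it is the driving reason for the paper's choice of $\phi(t,v)$.

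A related gap: for the remainder term you invoke only the global $L_p$-estimate from Theorem~\ref{theorem 11.3}, which is not enough. Because the cutoff is not in $x$, the RHS $\vec f\phi$, $g\phi$ has unbounded $x$-support, and to bound the local averages of $u_0$, $D_vu_0$, $(-\Delta_x)^{1/6}u_0$ one needs the super-exponential $x$-decay of Lemma~\ref{lemma 12.1} (proved by iterating cut-and-reabsorb estimates in $x$), not just the global norm bound. That decay is also what produces the geometric series with the correct $\lambda^{1/2}$ and $\nu$ weights that make the final absorption via Theorem~\ref{theorem A.4} close. Even if you insist on your $x$-cutoff (so $\vec f 1_{Q_{\kappa r}}$ is compactly supported in $x$), an analogous decay lemma for $u_2$ would still be needed and is not supplied; the polynomial decay coming from the global $L_p$ bound alone is not obviously sufficient to reproduce Proposition~\ref{proposition 12.4}'s precise structure.
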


The next result is derived from the above theorem in the way as  Corollary \ref{corollary 2.2} from Theorem \ref{theorem 11.3}.
\begin{corollary}[cf. Corollary \ref{corollary 2.2}]
				\label{corollary 3.2}
For any $u \in \bS_{p, r_1, \ldots, r_d, q} (\bR^{1+2d}_T, w)$,
one has\\
$(-\Delta_x)^{1/6} u \in L_{p, r_1, \ldots, r_d, q} (\bR^{1+2d}_T, w)$, and, in addition,
\begin{align*}
	\|(-\Delta_x)^{1/6} u\|_{ L_{p, r_1, \ldots, r_d, q} (\bR^{1+2d}_T, w) }
	\le  N \|u\|_{\bS_{p, r_1, \ldots, r_d, q} (\bR^{1+2d}_T, w)},
\end{align*}
where $N  = N (d, p, r_1, \ldots, r_d, q, K) > 0$.
  A similar assertion holds for \\
$u \in \bS_{p; r_1,\ldots, r_d} (\bR^{1+2d}_T, |x|^\alpha \prod_{i = 1}^d w_i (v_i))$.
\end{corollary}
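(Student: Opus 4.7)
The plan is to mimic the proof of Corollary \ref{corollary 2.2} verbatim, substituting Theorem \ref{theorem 12.1} for Theorem \ref{theorem 11.3}. Given $u \in \bS_{p, r_1, \ldots, r_d, q} (\bR^{1+2d}_T, w)$, I would freeze the coefficients of the model operator by setting $a^{ij} = \delta^{ij}$, so that $P_0 = \partial_t - v \cdot D_x - \Delta_v$, and then rewrite
$$
P_0 u + u \;=\; (\partial_t - v \cdot D_x) u \;+\; \div(-D_v u) \;+\; u.
$$
By the definition of $\bS_{p, r_1, \ldots, r_d, q}(\bR^{1+2d}_T, w)$, the term $(\partial_t - v \cdot D_x) u$ already belongs to $\bH^{-1}_{p, r_1, \ldots, r_d, q}(\bR^{1+2d}_T, w)$, while $u$ and $D_v u$ belong to $L_{p, r_1, \ldots, r_d, q}(\bR^{1+2d}_T, w)$. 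Combining these via the definition of $\bH^{-1}$, one produces $\vec f, g \in L_{p, r_1, \ldots, r_d, q}(\bR^{1+2d}_T, w)$ with
$$
P_0 u + u = \div \vec f + g, \qquad \|\vec f\| + \|g\| \;\le\; N \|u\|_{\bS_{p, r_1, \ldots, r_d, q}(\bR^{1+2d}_T, w)}.
$$

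Next, I would invoke Theorem \ref{theorem 12.1} with $\lambda = 1$, with $b \equiv \overline b \equiv c \equiv 0$, and with the $\vec f, g$ constructed above. The theorem yields
$$
\|(-\Delta_x)^{1/6} u\|_{L_{p, r_1, \ldots, r_d, q}(\bR^{1+2d}_T, w)} \;\le\; N\bigl(\|\vec f\| + \|g\|\bigr) \;\le\; N \|u\|_{\bS_{p, r_1, \ldots, r_d, q}(\bR^{1+2d}_T, w)},
$$
which is the claim. For the weighted $|x|^\alpha$ space, the same rewriting produces $\vec f, g \in L_{p; r_1, \ldots, r_d}(\bR^{1+2d}_T, |x|^{\alpha} \prod w_i(v_i))$ with the analogous norm bound, and the conclusion follows from the $L_{p;r_1,\ldots,r_d}$ statement built into Theorem \ref{theorem 12.1}.

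I do not anticipate a real obstacle: the corollary is a one-line consequence of Theorem \ref{theorem 12.1} once the defining property of $\bS_{p, r_1, \ldots, r_d, q}$ is packaged as a source identity for the model operator $P_0 + 1$. The only bookkeeping to do is to verify that $-\Delta_v u = \div(-D_v u)$ combined with the lower-order $u$ contribution legitimately assembles into an $\bH^{-1}_{p, r_1, \ldots, r_d, q}$ datum with norm controlled by $\|u\|_{\bS_{p, r_1, \ldots, r_d, q}}$; this is immediate from the definitions of the two spaces.
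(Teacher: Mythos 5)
Your proof is correct and follows exactly the route the paper intends: the paper states that Corollary \ref{corollary 3.2} is ``derived from the above theorem in the way as Corollary \ref{corollary 2.2} from Theorem \ref{theorem 11.3}'', and your argument spells that out, setting $a^{ij}=\delta_{ij}$, writing $P_0 u + u = (\partial_t - v\cdot D_x)u + \div(-D_v u) + u$ to produce admissible $\vec f, g$ with norm controlled by $\|u\|_{\bS_{p,r_1,\ldots,r_d,q}}$, and invoking Theorem \ref{theorem 12.1} with $\lambda=1$. No gap; this matches the paper's reasoning.
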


In the next lemma, we establish the estimate  of the aforementioned ``remainder term".
\begin{lemma}
            \label{lemma 12.1}
Let $\lambda \ge 0$ be a number, $\vec f\in L_p(\bR^{1+2d}_0)$ be a function vanishing outside  $(-1,0)\times \bR^d\times B_1$.
Let $u\in \bS_p((-1,0)\times \bR^{2d})$ be the unique solution to the equation
(see Theorem \ref{theorem 11.3} $(ii)$)
\begin{equation}
            \label{12.1}
	P_0 u +\lambda u =  \div \vec f +g,
	\quad
	u (-1, \cdot) = 0.
\end{equation}
Then, for any $R \ge 1$, one has
\begin{align}
            \label{12.2}
&\|(1+\lambda^{1/2})|u|+|D_v u|\|_{L_p((-1, 0) \times B_{R^3} \times B_R)}\notag\\
&\le N (d, \delta, p) \sum_{k=0}^\infty 2^{-k(k-1)/4} R^{-k}
\| |\vec f| +  \lambda^{-1/2} |g| \|_{L_p(Q_{1,2^{k+1} R})},
\\
&
    \label{12.3}
(|(-\Delta_x)^{1/6} u|^p)^{1/p}_{Q_{1, R}}
	\le N (d, \delta, p)  \sum_{k=0}^\infty 2^{-k}  \big( (|\vec f|^p)^{1/p}_{Q_{1,2^{k +1 } R}}
	+\lambda^{-1/2} (|g|^p)^{1/p}_{Q_{1,2^{k +1} R}}\big).
\end{align}
\end{lemma}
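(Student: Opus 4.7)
The plan is to decompose the source dyadically in $x$, apply an iterated cutoff-based $\bS_p$ estimate from Theorem \ref{theorem 11.3} to each dyadic piece, and to deduce \eqref{12.3} from the integral representation \eqref{eq1.20}. Write $\vec f = \sum_{m \ge 0} \vec f_m$ with $\vec f_0$ supported in $\{|x|^{1/3} < 2R\}$ and $\vec f_m$ supported in $\{2^m R \le |x|^{1/3} < 2^{m+1}R\}$ for $m \ge 1$, and analogously for $g$. By Theorem \ref{theorem 11.3}$(ii)$ each $(P_0 + \lambda) u_m = \div \vec f_m + g_m$, $u_m(-1, \cdot) = 0$ has a unique $\bS_p$-solution, and $u = \sum_m u_m$. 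Since $\vec f_m, g_m$ are supported in $Q_{1, 2^{m+1}R}$, it suffices to prove, for every $m \ge 0$,
\[
(1+\lambda^{1/2})\|u_m\|_{L_p(\Omega)} + \|D_v u_m\|_{L_p(\Omega)} \le N \cdot 2^{-m(m-1)/4} R^{-m} \bigl(\|\vec f_m\|_{L_p} + \lambda^{-1/2}\|g_m\|_{L_p}\bigr),
\]
where $\Omega := (-1, 0) \times B_{R^3} \times B_R$. For $m = 0$ this is immediate from Theorem \ref{theorem 11.3}.

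For $m \ge 1$ the key observation is that $\vec f_m, g_m \equiv 0$ on $\{|x|^{1/3} < 2^m R\}$, so $u_m$ satisfies $(P_0 + \lambda) u_m = 0$ there. I would introduce smooth cutoffs $\zeta_l(x, v)$, $l = 0, 1, \ldots, m-1$, each equal to $1$ on $\Omega_l^{xv} := B_{(2^l R)^3} \times B_{2^l R}$ and supported in $\Omega_{l+1}^{xv}$, with $|D_x^\alpha D_v^\beta \zeta_l| \le C (2^l R)^{-3|\alpha|-|\beta|}$. A direct computation, rewriting $-2 a^{ij} D_{v_i}\zeta_l D_{v_j} u_m$ in divergence form using $a^{ij} = a^{ij}(t)$, gives $(P_0 + \lambda)(\zeta_l u_m) = \div \vec F_l + G_l$ with $(\vec F_l)^j = -2 a^{ij} D_{v_i}\zeta_l \cdot u_m$ and $G_l = (a^{ij}D_{v_i v_j}\zeta_l - v \cdot D_x \zeta_l) u_m$, both supported in the annulus $A_l := \Omega_{l+1}^{xv}\setminus \Omega_l^{xv}$. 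On $A_l$ one has $|v| \le 2^{l+1}R$, so $|\vec F_l| \le C (2^l R)^{-1}|u_m|$ and $|G_l| \le C(2^l R)^{-2}|u_m|$; applying Theorem \ref{theorem 11.3} to $\zeta_l u_m$ then yields
\[
(1+\lambda^{1/2})\|u_m\|_{L_p(\Omega_l^{xv})} + \|D_v u_m\|_{L_p(\Omega_l^{xv})} \le N\bigl((2^l R)^{-1} + \lambda^{-1/2}(2^l R)^{-2}\bigr) \|u_m\|_{L_p(A_l)}.
\]
Iterating this inequality $m$ times (via $\|u_m\|_{L_p(A_l)} \le \|u_m\|_{L_p(\Omega_{l+1}^{xv})}$) and closing with the global bound $\|u_m\|_{L_p(\bR^{1+2d})} \le N\lambda^{-1/2}(\|\vec f_m\|_{L_p} + \lambda^{-1/2}\|g_m\|_{L_p})$ from Theorem \ref{theorem 11.3}, the accumulated product of level-$l$ factors is absorbed into the super-exponential factor $2^{-m(m-1)/4} R^{-m}$.

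For \eqref{12.3} I would use the pointwise formula \eqref{eq1.20} to write
\[
(-\Delta_x)^{1/6} u(z_0) = c_{d,1/6}\int_{\bR^d} \frac{u(z_0) - u(t_0, x_0 + y, v_0)}{|y|^{d+1/3}}\, dy,
\]
splitting the $y$-integral into $\{|y|^{1/3} < 2R\}$ and the shells $\{2^k R \le |y|^{1/3} < 2^{k+1}R\}$ for $k \ge 1$. On each shell $|y|^{-d-1/3} \sim (2^k R)^{-d-1/3}$ and the shell volume is $\sim (2^k R)^{3d}$, so averaging over $z_0 \in Q_{1,R}$ and using the $L_p$-bound of $u$ on the translated cylinders contained in $Q_{1, 2^{k+1}R}$ (a consequence of \eqref{12.2} applied in the shifted kinetic ball) yields a net gain of order $(2^k R)^{-1}$ per shell, producing the $\sum_k 2^{-k}$ decay.

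The main obstacle is the delicate bookkeeping in the iterated cutoff argument. A naive iteration produces decay $2^{-m(m-1)/2}R^{-m}$ with a multiplicative constant $N^{m+1}$; the weaker exponent $m(m-1)/4$ stated in the lemma is precisely what remains after absorbing the $N^{m+1}$ into half of the super-exponential decay, and one must also verify that the residual $\lambda$-factors at each step combine consistently to produce the $(1+\lambda^{1/2})$ prefactor on the left and the $\lambda^{-1/2}$ attached to $g$ on the right.
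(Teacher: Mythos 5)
Your decomposition of the source into dyadic shells in $x$ and the cutoff-based iteration is essentially the same mechanism the paper uses for \eqref{12.2}, and your choice to push the cross-term $-2a^{ij}D_{v_i}\zeta_l\,D_{v_j}u_m$ into the divergence (so the right-hand side only involves $u_m$ rather than $D_vu_m$) is a harmless variation. However, there is a missing ingredient that you flag yourself but do not resolve: the $\lambda$-bookkeeping does not close without first normalizing $\lambda$. In your iterate the factor is $(2^lR)^{-1}+\lambda^{-1/2}(2^lR)^{-2}$, and iterating it against the left-hand side weight $(1+\lambda^{1/2})$ only produces a uniformly summable gain when $\lambda\ge 1$ (so that $\lambda^{-1/2}(2^lR)^{-1}\le 1$); for $0<\lambda<1$ the accumulated product picks up powers of $\lambda^{-1/2}$ and the claimed bound does not follow. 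The paper removes this obstruction at the very outset by replacing $u$ with $U=ue^{-t}$, which turns the equation into one with $\lambda+1\ge 1$ and multiplies the data by $e^{-t}\in[1,e]$; without this exponential‑multiplier reduction the iteration is not $\lambda$-uniform.

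The outline for \eqref{12.3} has a more serious gap. You propose to average the pointwise formula \eqref{eq1.20} for $(-\Delta_x)^{1/6}u$ directly over $Q_{1,R}$, but for the local piece $\{|y|^{1/3}<2R\}$ the integrand $\bigl(u(z_0)-u(t_0,x_0+y,v_0)\bigr)|y|^{-d-1/3}$ is not absolutely integrable for a general $u\in\bS_p$: formula \eqref{eq1.20} requires Lipschitz regularity in $x$, which $u$ does not have, and even after mollifying there is no way to convert the local singular integral into the $L_p$-bound you want. The paper sidesteps this: it multiplies by a cutoff $\zeta_0$ (equal to $1$ on a slightly larger kinetic ball), bounds $(-\Delta_x)^{1/6}(u\zeta_0)$ globally by applying Theorem \ref{theorem 11.3} to the equation for $u\zeta_0$ together with \eqref{12.2}, and then estimates only the commutator $\zeta_0(-\Delta_x)^{1/6}u_\varepsilon-(-\Delta_x)^{1/6}(u_\varepsilon\zeta_0)$. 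On $Q_{1,R}$ this commutator kernel vanishes for $|y|\le(2^{3/2}-1)R^3$ because $\zeta_0(x)=\zeta_0(x-y)=1$ there, so only the nonsingular tail $\{|y|>(2^{3/2}-1)R^3\}$ remains, and that tail is summed with Lemma \ref{lemma 4.7}. You should adopt this cutoff-and-commutator structure rather than using the pointwise formula on all of $\bR^d$. Also note that the translation-invariance argument you invoke ("\eqref{12.2} applied in the shifted kinetic ball") needs justification, whereas the paper's route avoids it entirely by feeding the bound from \eqref{12.2} through a single scaling lemma.
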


\begin{proof}
We follow the proof of Lemma 5.2 of \cite{DY_21} very closely. By considering the equation satisfied by $U:=u e^{-t}$, without loss of generality, we may assume that $\lambda\ge 1$.

\textit{Estimate of $u, D_v u$.}
Denote
$$
   \vec  f_0 := \vec f 1_{x \in B_{(2R)^3}}, \, \,   \vec f_k := \vec f 1_{x \in B_{ (2^{k+1} R)^3 } \setminus B_{ (2^{k} R)^3 } },\, k  \in \{1, 2, \ldots\}, \, \, \text{so that} \, \, \vec f = \sum_{k = 0}^{\infty} \vec f_k,
$$
and we define $g_k, k \ge 0$ in a similar way. By Theorem \ref{theorem 11.3} $(ii)$,
there exists a unique solution $u_k \in \bS_p ((-1, 0)\times \bR^{2d})$ to Eq. \eqref{12.1} with $\vec f_k$ and $g_k$ in place of $\vec f$ and $g$, respectively, and, in addition,  one has
\begin{align}
            \label{12.4}
&\|\lambda^{1/2}|u_k|+|D_v u_k|\|_{L_p ((-1,0)\times \bR^{2d})}
\le N  \| |\vec f_{k}| +\lambda^{-1/2}|g_k|\|_{L_p ((-1,0)\times \bR^{2d})}.
\end{align}
In addition, by Theorem \ref{theorem 11.3} $(ii)$,
$$
   u = \lim_{n \to \infty} \sum_{k = 0}^n  u_k \quad \text{in} \, \,  L_{ p } ((-1, 0)\times \bR^{2d}),
$$
and a similar identity holds for  $D_v u$.

Next, let $\zeta_j= \zeta_j (x,v) \in C_0^\infty(B_{(2^{j+1} R)^3}\times B_{2^{j+1} R})$, $j=0,1,2,\ldots,$ be a sequence of functions such that $\zeta_j=1$ on $B_{(2^{j+1/2} R)^3}\times B_{2^{j+1/2} R}$ and
\begin{align*}
&|\zeta_j|\le 1,\quad |D_v \zeta_j|\le N 2^{-j}R^{-1},\\
&|D^2_v \zeta_j|\le N 2^{-2j}R^{-2},
\quad |D_x \zeta_j|\le N  2^{-3j}R^{-3}.
\end{align*}
For $k\ge 1$ and $j=0,1,\ldots,k-1$, we set $u_{k,j}=u_k\zeta_j$, which satisfies
\begin{equation}
			\label{eq12.1.1}
P_0 u_{k,j}  + \lambda u_{k,j}= u_k P_0 \zeta_j + \div ( \vec f_k \zeta_j)  -\vec f_k \cdot D_v \zeta_j + g_k \zeta_j -2(aD_v \zeta_j)\cdot D_v u_k.
\end{equation}
Observe that for such $j$,  $\vec f_k \zeta_j\equiv 0$, $\vec f_k \cdot D_v \zeta_j \equiv 0$, and $g_k \zeta_j = 0$.
Then, by Theorem \ref{theorem 11.3} $(ii)$ and the fact that $\lambda \ge 1$,
\begin{align*}
&\|\lambda^{1/2}|u_{k}|+|D_v u_{k}|\|_{L_p((-1,0)\times B_{(2^j R)^3}\times B_{2^j  R})}\\
&\le N  \lambda^{-1/2} 2^{-j} R^{-1}\||u_{k}|+|D_v u_{k}|\|_{L_p((-1,0)\times B_{(2^{j+1} R)^3}\times B_{2^{j+1} R})}\\
&\le N 2^{-j} R^{-1} \|\lambda^{1/2} |u_{k}|+|D_v u_{k}|\|_{L_p((-1,0)\times B_{(2^{j+1} R)^3}\times B_{2^{j+1} R})}.
\end{align*}
By using induction, the above inequality, and \eqref{12.4}, we obtain
\begin{align*}
&\| \lambda^{1/2} |u_{k}|+|D_v u_{k}|\|_{L_p((-1,0)\times B_{ R^3}\times B_{R})}\\
& \le N^k 2^{-k(k-1)/2} R^{-k} \| |\vec f_{k}| + \lambda^{-1/2}|g_k|\|_{L_p((-1, 0) \times \bR^{2d})}\\
& \le N 2^{-k(k-1)/4} R^{-k}\||\vec f|+ \lambda^{-1/2}|g|\|_{L_p(Q_{1, 2^{k+1}R})}.
\end{align*}
This combined with \eqref{12.4} with $k = 0$
gives \eqref{12.2}.

\textit{Estimate of $(-\Delta_x)^{1/6} u$}.
Recall that $u \zeta_0$ satisfies \eqref{eq12.1.1} with $k = 0.$
Then, by Theorem \ref{theorem 11.3} $(ii)$ and \eqref{12.2} with $2 R$ in place of $R$, one has
\begin{equation}
            \label{eq12.1.2}
\begin{aligned}
&\|(-\Delta_x)^{1/6} (u\zeta_0)\|_{L_p((-1,0)\times \bR^{2d})}\\
&	\le
N\sum_{k=0}^\infty 2^{-k(k-1)/4} (2R)^{-k}
 \| |\vec f|+ \lambda^{-1/2}|g|\|_{L_p(Q_{1,2^{k +1} (2R)  })}.
\end{aligned}
\end{equation}
Now we only need to bound the commutator term.
Let $u_{\varepsilon}$ be a mollification of $u$ in the $x$ variable.
Then, $(-\Delta_x)^{1/6} u_{\varepsilon}$ is given by \eqref{eq1.20} with $s = 1/6$.
Due to  the fact that $\zeta_0 = 1$ in $B_{(2^{1/2} R)^3} \times B_{2^{1/2} R }$, for any $z \in Q_{1, R}$,
\begin{align*}
&|   \zeta_0  (-\Delta_x)^{1/6} u_{\varepsilon} - (-\Delta_x)^{1/6} (u_{\varepsilon}  \zeta_0 )| (z)\\
&\le N (d) \int_{ |y| > (2^{3/2} - 1) R^3 } |u_{\varepsilon}| (t, x+y, v) |y|^{-d-1/3} \, dy.
\end{align*}
Then, by Lemma \ref{lemma 4.7}, we get
\begin{align*}
    &\|\zeta_0 (-\Delta_x)^{1/6} u_{\varepsilon} - (-\Delta_x)^{1/6} (u_{\varepsilon} \zeta_0)\|_{ L_p (Q_{1, R}) }\\
&    \le N (d) R^{-1} \sum_{k = 0}^{\infty} 2^{-k - 3d k/p } \|u_{\varepsilon}\|_{ L_{p} (Q_{1, 2^k R}) }.
\end{align*}
Passing to the limit as $\varepsilon \to 0$, we may replace $u_{\varepsilon}$ with $u$ in the above inequality. Furthermore, by \eqref{12.2}, the right-hand side is less than
 $$
     N (d, \delta, p) R^{-1} \sum_{j = 0}^{\infty}
    2^{-k - 3d k/ p }
    \sum_{k = 0}^{\infty}  2^{-j (j-1)/4} (2^k R)^{-j} \|  |\vec f|+ \lambda^{-1/2}|g|\|_{ L_{  p } (Q_{1, 2^{k+j+1} R}) }.
 $$
Switching the order of summation and changing the index $k \to k+j$, we may replace the double sum with
 $$
	\sum_{k = 0}^{\infty} 2^{- k -3dk/p} \| |\vec f|+ \lambda^{-1/2}|g|\|_{ L_{ p } (Q_{1, 2^{k+1 } R}) }.
 $$
This combined with
\eqref{eq12.1.2} gives the desired estimate \eqref{12.3}.
\end{proof}

Here is the mean oscillation estimate of a $P_0+\lambda$-caloric part.
\begin{proposition}
            \label{proposition 12.3}
Let $p > 1$, $\lambda \ge  0$, $r > 0$, $\nu \geq 2$ be numbers,
 $z_0 \in  \overline{\bR^{1+2d}_T}$,
and
$u \in \bS_{p} ((t_0-(4 \nu r)^2, t_0) \times \bR^{2d})$ be a function such that
$P_0 u +\lambda u = 0$
 in $(t_0- (\nu r)^2, t_0) \times \bR^d \times B_{\nu r} (v_0)$.
Then, one has
  \begin{align*}
    J_1 &:=
    \bigg(|(-\Delta_x)^{1/6} u -  ((-\Delta_x)^{1/6} u)_{ Q_r (z_0) }|^p\bigg)^{1/p}_{Q_r (z_0)}\\
&\leq N  \nu^{-1}
    (|(-\Delta_x)^{1/6} u|^p)^{1/p}_{ Q_{\nu r } (z_0)},\\
       	J_2 &:= \lambda^{1/2}\bigg(|u -  (u)_{ Q_r (z_0) }|^p\bigg)^{1/p}_{ Q_r (z_0) }
 	+ \bigg(|D_v u -  (D_v u)_{ Q_r (z_0) }|^p\bigg)^{1/p}_{ Q_r (z_0) } \\
&
  \leq  N   \nu^{-1}
    \lambda^{1/2} (|u|^p)^{1/p}_{ Q_{\nu r} (z_0)  }
+ N   \nu^{-1}
    (|D_v u|^p)^{1/p}_{ Q_{\nu r} (z_0)  }\\
    &\quad + N \nu^{-1}
    \sum_{k = 0}^{\infty}
	 2^{-2k}
	 (|(-\Delta_x)^{1/6} u|^p)^{1/p}_{ Q_{\nu r, 2^k \nu r} (z_0)},
    \end{align*}
where $N = N (d, \delta, p)$.
\end{proposition}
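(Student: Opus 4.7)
My plan is to exploit the two invariances of $P_0$ --- Galilean translation $(t,x,v)\mapsto(t-t_0,\,x-x_0+(t-t_0)v_0,\,v-v_0)$ and the parabolic scaling from Lemma~\ref{lemma 4.1} --- to reduce the statement first to $z_0=0$, $v_0=0$, $r=1$. In this reduced setting the hypothesis reads $P_0u+\lambda u=0$ on $(-\nu^2,0)\times\bR^d\times B_\nu$, and the goal is to dominate the oscillations on $Q_1$ by $N\nu^{-1}$ times $L_p$-averages on $Q_\nu$, respectively on the elongated cylinders $Q_{\nu,2^k\nu}$. Since $a^{ij}$ depends only on $t$, the operators $D_x$ and $(-\Delta_x)^{1/6}$ commute with $P_0$, so $w:=(-\Delta_x)^{1/6}u$ also solves $P_0w+\lambda w=0$ on the caloric region; this already reduces $J_1$ to an interior regularity estimate for a caloric function.

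The interior estimates come from a cutoff argument combined with the bound of Theorem~\ref{theorem 4.10}. Fix a smooth $\varphi$ that equals $1$ on $Q_4$ and is supported in $Q_{\nu/2}$, with $|D_v^j\varphi|\le N\nu^{-j}$, $|D_x\varphi|\le N\nu^{-3}$, and $|\partial_t\varphi|\le N\nu^{-2}$. Applied to $\varphi w$ (respectively $\varphi u$) and using that $\varphi(P_0w+\lambda w)=0$ on the support, Theorem~\ref{theorem 4.10} reduces the problem to estimating the commutator $\|[P_0,\varphi]w\|_{L_p}\le N\nu^{-2}\|w\|_{L_p(Q_\nu)}+N\nu^{-1}\|D_vw\|_{L_p(Q_\nu)}$ (the bound $|v|\le\nu$ on $\mathrm{supp}\,\varphi$ combines with $|D_x\varphi|\le N\nu^{-3}$ to give the $\nu^{-2}$). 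A standard Caccioppoli-type iteration along a chain of shrinking cutoffs absorbs the $\|D_vw\|$-term on the right and yields $\|D_v^2 w\|_{L_p(Q_2)}+\|(-\Delta_x)^{1/3}w\|_{L_p(Q_2)}+\|D_v(-\Delta_x)^{1/6}w\|_{L_p(Q_2)}\le N\nu^{-1}\|w\|_{L_p(Q_\nu)}$, together with the analogous bound for $u$ controlled by $\lambda^{1/2}\|u\|_{L_p(Q_\nu)}+\|D_vu\|_{L_p(Q_\nu)}$ (Theorem~\ref{theorem 4.10} naturally carries the $\lambda^{1/2}$-weight). A Poincar\'e-type inequality on $Q_1$ adapted to the kinetic characteristic $s\mapsto(s,x-(t-s)v,v)$ then converts oscillations in $v$ and along this flow into the above $v$-derivative terms and $(\partial_t-v\cdot D_x)(\cdot)$-terms; the latter are eliminated via the equation $(\partial_t-v\cdot D_x)(\cdot)=a^{ij}D_{v_iv_j}(\cdot)-\lambda(\cdot)$. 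This delivers $J_1$ and the ``local'' part of $J_2$.

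The one nontrivial remaining piece is the pure $x$-oscillation of $u$ at fixed $(t,v)$ in $J_2$. Since $(-\Delta_x)^{1/6}$ is nonlocal, the difference $u(t,x_1,v)-u(t,x_2,v)$ cannot be bounded by a local integral of $(-\Delta_x)^{1/6}u$; my plan is to use the Riesz representation $u(t,\cdot,v)=c\,(-\Delta_x)^{-1/6}\bigl[(-\Delta_x)^{1/6}u(t,\cdot,v)\bigr]$ --- convolution with the kernel $|y|^{1/3-d}$ --- to rewrite the difference as an integral against $|y-x_1|^{1/3-d}-|y-x_2|^{1/3-d}$, then decompose the $y$-integral into the dyadic shells $\{(2^k\nu)^3<|y|\le(2^{k+1}\nu)^3\}$ and use the gradient bound $\bigl|\nabla|y|^{1/3-d}\bigr|\lesssim|y|^{-2/3-d}$. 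Each shell contributes (kernel size)$\times$(shell volume)$=(2^k\nu)^{-2}=2^{-2k}\nu^{-2}$; after H\"older on the shell and the $\nu^{-1}$-normalization, this produces precisely the tail $\sum_{k\ge0}2^{-2k}(|(-\Delta_x)^{1/6}u|^p)^{1/p}_{Q_{\nu,2^k\nu}}$. The main obstacle is aligning the Riesz-kernel decay, the dyadic shell volumes, and the $\nu$-scaling so that the geometric rate is exactly $2^{-2k}$ and the $\nu^{-1}$ gain survives the H\"older step --- once that matching is done, Theorem~\ref{theorem 4.10} closes the argument.
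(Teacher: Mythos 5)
Your plan follows essentially the same route as the paper's proof: reduce by scaling (Lemma~\ref{lemma 4.1}) to a normalized cylinder, observe that $w=(-\Delta_x)^{1/6}u$ is again $P_0+\lambda$-caloric, get interior $L_p$-to-$L_\infty$ bounds on derivatives via cutoff plus Theorem~\ref{theorem 4.10} and a Caccioppoli-type iteration (these are Lemmas~\ref{lemma 5.1},~\ref{lemma 5.2},~\ref{lemma 5.4} in the paper), and produce the nonlocal tail $\sum 2^{-2k}(\cdots)_{Q_{\nu r,2^k\nu r}}$ from a singular-integral kernel of decay $|y|^{-d-2/3}$ combined with the dyadic shell decomposition (Lemma~\ref{lemma 4.7}).

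The one place your plan drifts from the paper --- and slightly mischaracterizes the difficulty --- is the claim that the ``pure $x$-oscillation of $u$ at fixed $(t,v)$'' is the sole remaining piece. In fact the singular integral is used in the paper to control $\|D_x u\|_{L_p}$ (Lemma~\ref{lemma 5.3}(ii)), and that $D_x u$ bound then propagates through the entire iterative scheme of Lemma~\ref{lemma 5.4}: it controls $\partial_t u$ via the identity $\partial_t u = v\cdot D_x u - a^{ij}D_{v_iv_j}u - \lambda u$, feeds the source terms of $P_0(D_v^\alpha D_x^\beta u)$ in \eqref{5.4.2}--\eqref{eq5.4.12}, and ultimately underlies the $L_\infty$ bound on every $\partial_t^j D_x^l D_v^m(u,\,D_v u)$. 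So the ``local'' part of $J_2$ (the $t$- and $v$-oscillations of $u$ and $D_v u$, via the mean-value theorem on $Q_{1/\nu}$) is itself not independent of the $D_x u$ estimate. Your ``Poincar\'e along characteristics plus elimination of $\partial_t-v\cdot D_x$ via the equation'' will close only after this coupling is tracked; that is exactly what Lemma~\ref{lemma 5.4}(ii) with its four cases is doing.

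A second, more technical caveat: you propose to write $u(t,\cdot,v)=c(-\Delta_x)^{-1/6}[(-\Delta_x)^{1/6}u(t,\cdot,v)]$ and take differences of the Riesz-potential kernel $|y|^{1/3-d}$. For functions merely in $L_p$ that formula is not valid pointwise, and its justification is not free --- the paper circumvents this by instead working with the bounded operator $\mathcal{A}=D_x(-\Delta_x)^{-1/6}$, whose kernel $y/|y|^{d+5/3}$ is a legitimate principal-value singular integral, and by proving via Lemma~\ref{lemma A.2} that the pointwise formula is valid on $C^1_0$ functions; a mollification step (showing $u_\varepsilon(t,\cdot,v)\in C^k_0(\bR^d)$, part of Lemma~\ref{lemma 5.3}(i)) is what makes the kernel formula applicable. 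Your kernel-difference $|y-x_1|^{1/3-d}-|y-x_2|^{1/3-d}$ does decay like $|y|^{-d-2/3}$, so the numerics match the paper's, but the mollification step is required for rigor and you should incorporate it. With those two corrections your outline reproduces the paper's argument.
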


\subsection{Proof of Proposition \ref{proposition 12.3}}

The next two lemmas are taken from   \cite{DY_21}. The first one, Lemma \ref{lemma 5.1}, is proved by localizing Theorem \ref{theorem 4.10} $(i)$.
The second lemma follows from the global $L_p$  estimate of $(-\Delta_x)^{1/3} u$
in Theorem \ref{theorem 4.10} $(i)$
and the local estimate
of $D_v u$ in Lemma \ref{lemma 5.1}. 
\begin{lemma}[Interior $S_p$ estimate, see Lemma 6.4 of \cite{DY_21}]
            \label{lemma 5.1}
Let  $p > 1$, $\lambda \geq 0$, and $r_1, r_2, R_1, R_2 > 0$
 be  numbers such that
$r_1 < r_2$ and $R_1 < R_2$.
Let
$u \in S_{p,\text{loc}} (\bR^{1+2d}_0)$ and
denote $f = P_0 u + \lambda u$.
Then, there exists a constant $N = N (d, \delta, p) > 0$ such that
\begin{align*}
	& \,
	  \lambda \|u\|_{ L_p ( Q_{r_1,  R_1}) } +
	  (r_2 - r_1)^{-1} \|D_v u\|_{ L_p (Q_{r_1,  R_1}) }\\
	   &+ \|D^2_v u\|_{ L_p ( Q_{r_1,  R_1}) }
	   + \|\partial_t u - v \cdot D_x u\|_{ L_p ( Q_{r_1,  R_1}) }\\
	&\leq N   \| f \|_{ L_p ( Q_{r_2,  R_2}) }+  N   ((r_2 - r_1)^{-2} + r_2 (R_2 - R_1)^{-3})  \|u\|_{ L_p ( Q_{r_2,   R_2}) }.
\end{align*}	
\end{lemma}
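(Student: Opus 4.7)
The plan is to localize the global $S_p$-estimate of Theorem \ref{theorem 4.10}(i) by multiplying $u$ with an anisotropic cutoff and then closing the resulting inequality by a Krylov-type iteration. First, I would fix a smooth cutoff $\zeta = \zeta(t,x,v)$ supported in $Q_{r_2, R_2}$ with $\zeta \equiv 1$ on $Q_{r_1, R_1}$, satisfying the (scale-matched) bounds
$$
|\partial_t \zeta| + |D^2_v \zeta| \le N(r_2-r_1)^{-2},\quad |D_v \zeta| \le N(r_2-r_1)^{-1},\quad |D_x \zeta| \le N(R_2-R_1)^{-3}.
$$
The cubic scaling in $x$ reflects the kinetic quasi-metric \eqref{1.2}.

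Second, applying the product rule to $P_0 + \lambda$ gives
$$
(P_0+\lambda)(u\zeta) = \zeta f + u P_0\zeta - 2a^{ij}(D_{v_i}\zeta)(D_{v_j} u),
$$
where, using $|v| \le r_2$ on the support of $\zeta$ to bound the transport term $|v \cdot D_x \zeta|$,
$$
|P_0 \zeta| \le N\bigl[(r_2-r_1)^{-2} + r_2(R_2-R_1)^{-3}\bigr].
$$
Extending $u\zeta$ by zero so that it lies in $S_p(\bR^{1+2d}_0)$, Theorem \ref{theorem 4.10}(i) applied to $u\zeta$ yields
$$
\lambda\|u\zeta\| + \|D_v^2(u\zeta)\| + \|(\partial_t - v\cdot D_x)(u\zeta)\| \le N \|(P_0+\lambda)(u\zeta)\|_{L_p},
$$
which, by the above and since $\zeta \equiv 1$ on $Q_{r_1,R_1}$, dominates the left-hand side of the claimed estimate by
$$
N\|f\|_{L_p(Q_{r_2,R_2})} + N\bigl[(r_2-r_1)^{-2} + r_2(R_2-R_1)^{-3}\bigr]\|u\|_{L_p(Q_{r_2,R_2})} + N(r_2-r_1)^{-1}\|D_v u\|_{L_p(Q_{r_2,R_2})}.
$$

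Third, to absorb the unwanted $\|D_v u\|$ on the right, I would run the classical iteration over a nested family of radii. Define for $(\rho,\sigma)$ ranging in $[r_1,r_2]\times[R_1,R_2]$
$$
\Phi(\rho,\sigma) = \lambda\|u\|_{L_p(Q_{\rho,\sigma})} + \|D_v u\|_{L_p(Q_{\rho,\sigma})} + \|D_v^2 u\|_{L_p(Q_{\rho,\sigma})} + \|(\partial_t - v\cdot D_x)u\|_{L_p(Q_{\rho,\sigma})}.
$$
Repeating the cutoff argument for arbitrary intermediate radii $(\rho,\sigma) < (\rho',\sigma') \le (r_2, R_2)$ gives
$$
\Phi(\rho,\sigma) \le N\|f\|_{L_p(Q_{r_2,R_2})} + N\bigl[(\rho'-\rho)^{-2} + r_2(\sigma'-\sigma)^{-3}\bigr]\|u\|_{L_p(Q_{r_2,R_2})} + N(\rho'-\rho)^{-1}\|D_v u\|_{L_p(Q_{\rho',\sigma'})}.
$$
Combined with the one-dimensional-type interpolation in $v$
$$
\|D_v u\|_{L_p(Q_{\rho',\sigma'})} \le \epsilon\|D_v^2 u\|_{L_p(Q_{\rho',\sigma'})} + N\epsilon^{-1}\|u\|_{L_p(Q_{\rho',\sigma'})},
$$
a suitable choice $\epsilon \sim (\rho'-\rho)$ reduces the coefficient of $\Phi(\rho',\sigma')$ to a small constant, and a standard iteration lemma (of Giaquinta--Giusti type on the pair $(\rho,\sigma)$) then yields $\Phi(r_1,R_1)$ bounded by the right-hand side of the claimed inequality.

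The main technical point is the bookkeeping forced by the anisotropy: $v$-derivatives scale like $(r_2-r_1)^{-1}$ while $x$-derivatives scale like $(R_2-R_1)^{-3}$, and the transport term $v\cdot D_x\zeta$ contributes the mixed factor $r_2(R_2-R_1)^{-3}$. Propagating these two independent scales simultaneously through the iteration, while making sure the $D_v u$ absorption does not spoil the $(R_2-R_1)^{-3}$ dependence, is the only delicate part; everything else is a direct localization of Theorem \ref{theorem 4.10}(i).
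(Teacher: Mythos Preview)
Your proposal is correct and follows exactly the route the paper indicates: the paper does not prove Lemma~\ref{lemma 5.1} in the text but cites it from \cite{DY_21}, noting only that it ``is proved by localizing Theorem~\ref{theorem 4.10}~$(i)$,'' and your cutoff-plus-global-estimate argument with a Giaquinta--Giusti iteration to absorb the extra $\|D_v u\|$ term is precisely that localization. The only cosmetic point is that your $\Phi$ carries $\|D_v u\|$ with coefficient $1$ while the statement has $(r_2-r_1)^{-1}\|D_v u\|$ on the left, but this follows at once from the interpolation $\|D_v u\|\le (r_2-r_1)\|D_v^2 u\|+N(r_2-r_1)^{-1}\|u\|$ after the $D_v^2 u$ bound is in hand.
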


\begin{lemma}[Caccioppoli type inequality, see Lemma 6.5 of \cite{DY_21}]
            \label{lemma 5.2}
Let $\lambda \ge 0$,
$0 < r < R \leq 1$, and $p > 1$ be numbers,
and
$u\in S_{p,\text{loc}}(\bR^{1+2d}_0)$ be a function
such that $P_0 u +\lambda u = 0$ in $Q_1$.
Then, there exists a constant
$N = N (d,\delta, p,r, R)$ such that
\begin{equation}
                \label{eq5.2.1}
    \|D_x u\|_{L_p (Q_r)}
    \leq N  \|u\|_{L_p (Q_R)}.
\end{equation}
\end{lemma}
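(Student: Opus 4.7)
The plan is to bootstrap the global $(-\Delta_x)^{1/3}$-estimate of Theorem \ref{theorem 4.10}(i), applied to cutoffs of $u$, and to absorb the $D_v u$ terms produced by the cutoffs using Lemma \ref{lemma 5.1}. The key enabling fact is that, since $a^{ij}$ depends only on $t$, the operator $P_0$ commutes with $(-\Delta_x)^{s}$ for every $s \ge 0$, so each fractional $x$-derivative of $u$ again solves the homogeneous equation $P_0 v + \lambda v = 0$ in $Q_1$ and can be fed back into the same argument.

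I would fix nested radii $r = r_0 < r_1 < r_2 < r_3 = R$ and a smooth cutoff $\zeta_1$ with $\zeta_1 \equiv 1$ on $Q_{r_2}$ and $\operatorname{supp}\zeta_1 \subset Q_{r_3}$. Setting $w_1 := \zeta_1 u$ and using $P_0 u + \lambda u = 0$ in $Q_1$, a direct calculation would give
$$P_0 w_1 + \lambda w_1 = u\, P_0 \zeta_1 - 2 a^{ij}(t)\, D_{v_i} u\, D_{v_j}\zeta_1,$$
which is supported in $Q_{r_3}$. Applying Theorem \ref{theorem 4.10}(i) to $w_1 \in \bS_p(\bR^{1+2d})$ and invoking Lemma \ref{lemma 5.1} to bound $\|D_v u\|_{L_p(Q_{r_3})}$ by $N\|u\|_{L_p(Q_R)}$ would yield
$$\|(-\Delta_x)^{1/3}(\zeta_1 u)\|_{L_p(\bR^{1+2d})} \le N\|u\|_{L_p(Q_R)}.$$
To transfer this bound to $u$ itself I would write, on $Q_{r_2}$, $(-\Delta_x)^{1/3} u = (-\Delta_x)^{1/3}(\zeta_1 u) - (-\Delta_x)^{1/3}((1-\zeta_1)u)$ and control the second, long-range term using the pointwise formula \eqref{eq1.20} together with a shell-by-shell estimate in the spirit of the proof of Lemma \ref{lemma 12.1}, obtaining $\|(-\Delta_x)^{1/3} u\|_{L_p(Q_{r_2})} \le N\|u\|_{L_p(Q_R)}$.

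Since $(-\Delta_x)^{1/3} u$ solves the same homogeneous equation in $Q_1$, I would iterate this procedure two more times on the shrinking pairs $(Q_{r_1}, Q_{r_2})$ and $(Q_{r_0}, Q_{r_1})$, reaching $\|(-\Delta_x) u\|_{L_p(Q_r)} \le N\|u\|_{L_p(Q_R)}$. The $L_p$-boundedness of the Riesz transforms in $x$ combined with an interpolation against $\|u\|_{L_p(Q_r)}$ would then deliver the desired estimate $\|D_x u\|_{L_p(Q_r)} \le N\|u\|_{L_p(Q_R)}$. The main obstacle will be the cutoff-to-full transfer step at each iteration: the nonlocality of $(-\Delta_x)^{1/3}$ forces the commutator with the cutoff to depend on long-range tails of $u$ (and, on later iterations, of $(-\Delta_x)^{1/3} u$, $(-\Delta_x)^{2/3} u$), which have to be reabsorbed into $\|u\|_{L_p(Q_R)}$ via the kind of sum-over-shells argument that appears in the proof of Lemma \ref{lemma 12.1}, while carefully tracking the dependence of the constants on $r$ and $R$.
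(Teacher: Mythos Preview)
Your overall strategy matches the paper's description: use the global $(-\Delta_x)^{1/3}$ estimate from Theorem~\ref{theorem 4.10}\,$(i)$ together with the local $D_v u$ estimate from Lemma~\ref{lemma 5.1}, and iterate. However, the transfer step has a genuine gap.

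You claim that on $Q_{r_2}$ one can bound $(-\Delta_x)^{1/3}((1-\zeta_1)u)$ by $\|u\|_{L_p(Q_R)}$ via a sum-over-shells argument. This fails: $(-\Delta_x)^{1/3}$ is nonlocal in $x$, and $(1-\zeta_1)u$ equals $u$ outside $Q_{r_3}$, where $u$ is only assumed to be in $S_{p,\text{loc}}$ with no global size control. The tail therefore involves $u$ over all of $\bR^d_x$, not just over $Q_R$. For the same reason, your assertion that ``$(-\Delta_x)^{1/3} u$ solves the same homogeneous equation in $Q_1$'' is incorrect: $(P_0+\lambda)((-\Delta_x)^{1/3} u) = (-\Delta_x)^{1/3}((P_0+\lambda)u)$, and the right-hand side at a point of $Q_1$ sees $(P_0+\lambda)u$ everywhere in $x$, where it need not vanish. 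The final Riesz-transform/interpolation step, applied to $u$ on $Q_r$, has the same defect.

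The fix is to never pass back to $u$ through a nonlocal operator. Work instead with a hierarchy of cutoffs $\zeta_3 \succ \zeta_2 \succ \zeta_1$ (each equal to $1$ on the support of the next) and the compactly supported functions $w_k = \zeta_k u$. After bounding $\|(-\Delta_x)^{1/3} w_3\|_{L_p(\bR^{1+2d})}$ exactly as you do, iterate on the globally defined $w_k$: the source term for $w_2 = \zeta_2 w_3$ involves only $w_3$ and $D_v w_3$ on $\operatorname{supp} D\zeta_2$, and all commutators now involve the compactly supported $w_k$, so the shell sums in the spirit of Lemma~\ref{lemma 4.7} terminate and are controlled by $\|u\|_{L_p(Q_R)}$. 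Only at the very end, where the \emph{local} operator $D_x$ appears, use $D_x u = D_x w_1$ on $Q_r$; the Riesz-transform/interpolation step should likewise be carried out on the globally supported $w_1$.
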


\begin{remark}
In the interior estimates in the aforementioned Lemma 6.4 of \cite{DY_21}, there are no terms involving $\lambda u$  and $\partial_t u - v \cdot D_x u$. By following the proof of that lemma and using the global $S_p$ estimate (see Theorem \ref{theorem 4.10}), one can, indeed, add these terms to the left-hand sides of the a priori estimates.

Furthermore, the Caccioppoli inequality in Lemma 6.5 of \cite{DY_21} is stated only in the case when $\lambda = 0$. Nevertheless, the same argument yields \eqref{eq5.2.1} in the case when $\lambda > 0$.
\end{remark}

The next lemma is a key ingredient of the proof of Proposition \ref{proposition 12.3}.
\begin{lemma}[cf. Lemma 6.6 of \cite{DY_21}]
            \label{lemma 5.3}
Let $p\in (1,\infty)$ and $u\in \bS_p ((-4, 0) \times \bR^d \times B_2)$
be a function such that
$P_0 u +\lambda u= 0$
in $(-1, 0) \times \bR^d \times B_1$.
Then, the following assertions hold.

$(i)$ The functions $u, (-\Delta_x)^{1/6} u \in S_{p, \text{loc}} ((-1, 0) \times \bR^d \times B_1)$. Furthermore,
\begin{equation}
                \label{eq5.3.12}
 (P_0 +  \lambda) u = 0, \quad   (P_0 + \lambda) (-\Delta_x)^{1/6} u = 0 \quad \text{a.e. in} \, \, (-1, 0)\times \bR^d \times B_1.
\end{equation}

$(ii)$ For any $r \in (0, 1)$, we have
\begin{equation}
                \label{eq5.3.0}
\begin{aligned}
  \|D_x u \|_{ L_p (Q_r)} &\leq
    N  \sum_{k = 0}^{\infty}
	  2^{- 2 k }	  (|(-\Delta_x)^{1/6} u|^p)_{ Q_{1, 2^k}}^{ 1/p   },
\end{aligned}
\end{equation}
where $N = N (d, \delta, p, r)$.
\end{lemma}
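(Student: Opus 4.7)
For part (i), the plan is to upgrade the given $\bS_p$-regularity of $u$ to $S_{p,\text{loc}}$ via a standard mollification argument. I would mollify $u$ in the $(x, v)$ variables with a smooth compactly supported kernel of width $\varepsilon$. Because $a^{ij}$ depends only on $t$, mollification in $x$ commutes with the full operator $P_0 + \lambda$, while mollification in $v$ commutes with $\partial_t$ and with the velocity-Laplacian-type term $D_{v_i}(a^{ij} D_{v_j})$, but produces a commutator of size $O(\varepsilon)$ with the transport term $v \cdot D_x$ (controlled in $L_p$ by Minkowski's inequality, exactly as in the proof of Lemma \ref{lemma 12.2}). Thus the mollification $u_\varepsilon$ is smooth and satisfies $(P_0 + \lambda) u_\varepsilon = h_\varepsilon$ on slightly smaller subsets of $(-1, 0) \times \bR^d \times B_1$, with $\|h_\varepsilon\|_{L_p} \to 0$. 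Applying the interior $S_p$-estimate of Lemma \ref{lemma 5.1} to $u_\varepsilon$ on shrinking sequences of cylinders gives uniform $S_p$-bounds, and passing to the limit yields $u \in S_{p,\text{loc}}((-1, 0) \times \bR^d \times B_1)$ together with $(P_0 + \lambda) u = 0$ a.e. For $(-\Delta_x)^{1/6} u$, I would use that it lies in $L_p$ by Corollary \ref{corollary 2.2} and that $(-\Delta_x)^{1/6}$ commutes with $P_0 + \lambda$ because $a^{ij}$ is independent of $x$; hence $(-\Delta_x)^{1/6} u$ satisfies the same equation in the interior and the same mollification scheme applies.

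For part (ii), the plan is to combine the Caccioppoli inequality of Lemma \ref{lemma 5.2} with a dyadic decomposition in $x$ via Riesz-potential inversion. First, Lemma \ref{lemma 5.2} gives $\|D_x u\|_{L_p(Q_r)} \leq N(r, \rho) \|u\|_{L_p(Q_\rho)}$ for some $r < \rho < 1$. To bound $\|u\|_{L_p(Q_\rho)}$ in terms of $(-\Delta_x)^{1/6} u$, I would fix a smooth cutoff $\chi(x)$ equal to $1$ on the $x$-projection of $Q_\rho$ and compactly supported, so that $u = \chi u$ on $Q_\rho$ and the Riesz-potential representation
\begin{equation*}
	(\chi u)(t, x, v) = c_d \int_{\bR^d} \frac{(-\Delta_x)^{1/6}(\chi u)(t, y, v)}{|x - y|^{d-1/3}} \, dy
\end{equation*}
is valid. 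Splitting $(-\Delta_x)^{1/6}(\chi u) = \chi (-\Delta_x)^{1/6} u + [(-\Delta_x)^{1/6}, \chi] u$ and bounding the commutator pointwise using the formula \eqref{eq1.20}, I would decompose the $y$-integration dyadically into annuli $2^{3k} \lesssim |y| < 2^{3(k+1)}$, matching the intrinsic KFP scaling $x \sim r^3$. Summing the resulting H\"older estimates, the kernel decay $|y|^{-(d+1/3)}$ combined with the volume factor $|Q_{1, 2^k}|^{-1/p} \sim 2^{-3kd/p}$ produces the precise factor $2^{-2k}$ multiplied by the averaged norm $(|(-\Delta_x)^{1/6} u|^p)^{1/p}_{Q_{1, 2^k}}$ on each shell.

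The main obstacle is coordinating the nonlocal effects of $(-\Delta_x)^{1/6}$ with the local Caccioppoli bounds and producing the precise geometric decay $2^{-2k}$. This exponent reflects the mismatch between the order $3$ at which $D_x$ scales and the order $1$ at which $(-\Delta_x)^{1/6}$ scales in the intrinsic KFP geometry, and verifying it requires careful bookkeeping of the kernel decay, the ball volume, and the normalization in the averaged seminorms $(\cdot)^{1/p}_{Q_{1, 2^k}}$. Additional technical issues are the local integrability of the Riesz kernel $|x - y|^{-(d-1/3)}$ (requiring either a Sobolev-type argument or a careful near-field estimate) and the commutator $[(-\Delta_x)^{1/6}, \chi]$, whose control hinges on the Lipschitz regularity of $\chi$ combined with the decay $|y|^{-(d+1/3)}$ at infinity.
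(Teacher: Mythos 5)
Your proposal for part (i) is essentially in the spirit of the paper, though the paper is more careful: it mollifies in $x$ only (so that $P_0+\lambda$ commutes exactly with the mollification), then for each fixed $x$ treats the equation as a nondegenerate parabolic equation in $(t,v)$ with right-hand side $v\cdot D_x u_\varepsilon$, mollifies in $v$, and invokes classical parabolic interior estimates. This two-stage scheme is used precisely to establish the high-order $x$-regularity $D^k_x u_\varepsilon\in L_p$ (and hence $u_\varepsilon(t,\cdot,v)\in C^k_0(\bR^d)$ by Sobolev embedding), which is what justifies the pointwise formula \eqref{eq1.20} for $(-\Delta_x)^{1/6}u_\varepsilon$ and its commutation with $\partial_t$ and $D^2_v$. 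Your proposal simply asserts that $(-\Delta_x)^{1/6}$ commutes with $P_0+\lambda$ because $a^{ij}$ is $x$-independent; that is formally true but needs the regularity bootstrap to be made rigorous in this setting, so that step is underdeveloped rather than wrong.

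Part (ii) contains a genuine gap. You reduce via Caccioppoli to bounding $\|u\|_{L_p(Q_\rho)}$ and then try to control $u$ itself by $(-\Delta_x)^{1/6}u$ through the Riesz potential $(-\Delta_x)^{-1/6}$. This reduction cannot succeed: the zeroth-order operator $(-\Delta_x)^{1/6}$ gives no information on the $x$-independent part of $u$, and indeed the function $u(t,x,v)=e^{-\lambda t}$ (or, more generally, any smooth solution of $(P_0+\lambda)u=0$ depending on $(t,v)$ only, suitably localized) has $(-\Delta_x)^{1/6}u\equiv 0$ and $D_x u\equiv 0$ but $\|u\|_{L_p(Q_\rho)}>0$, so the intermediate inequality $\|u\|_{L_p(Q_\rho)}\lesssim\sum_k 2^{-2k}(|(-\Delta_x)^{1/6}u|^p)^{1/p}_{Q_{1,2^k}}$ is false. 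Moreover, the Riesz-potential kernel $|y|^{-(d-1/3)}$ you would use has the wrong decay: integrated over a dyadic shell $|y|\sim 2^{3k}$ it contributes a factor $\sim 2^{+k}$, which \emph{grows} in $k$ and cannot produce the $2^{-2k}$ in \eqref{eq5.3.0}. The paper avoids both difficulties by writing $D_x u_\varepsilon$ directly as $\mathcal{A}\,h$ with $h=(-\Delta_x)^{1/6}u_\varepsilon$ and $\mathcal{A}=D_x(-\Delta_x)^{-1/6}$ (Lemma \ref{lemma A.2}); the kernel of $\mathcal{A}$ decays like $|y|^{-(d+2/3)}$, which on a dyadic shell yields exactly $2^{-2k}$ via Lemma \ref{lemma 4.7}. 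The remaining ``good'' term $\cR_x(-\Delta_x)^{1/3}(\zeta h)$ is then controlled not by Caccioppoli but by observing that $\zeta h$ solves a compactly supported $P_0+\lambda$ equation (by part (i) of the lemma) and applying the global $S_p$ estimate of Theorem \ref{theorem 4.10} to bound $\|(-\Delta_x)^{1/3}(\zeta h)\|_{L_p}$. So in part (ii) the key object must be $D_x(-\Delta_x)^{-1/6}$ acting on $(-\Delta_x)^{1/6}u$, not $(-\Delta_x)^{-1/6}$ acting after a Caccioppoli reduction to $u$.
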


\begin{proof}
Multiplying $u \in \bS_p ((-4, 0)\times \bR^{d}\times B_2)$ by a cutoff function $\phi = \phi (t, v)$ and using Corollary \ref{corollary 2.2},
we conclude that $(-\Delta_x)^{1/6} u \in L_p ((-1, 0) \times \bR^{d} \times B_1)$, so that the series on the right-hand side of \eqref{eq5.3.0} converges.

$(i)$ Let $u_{\varepsilon}$ be the mollification of $u$ in the $x$ variable. First, we will show that $u_{\varepsilon}$ is sufficiently regular.
We fix some $r_0 \in (0, 1)$. We claim that for any $k = \{0, 1, 2, \ldots\}$,
\begin{equation}
            \label{eq5.3.10}
    D^k_x \xi  \in L_p ((-r_0^2, 0) \times \bR^d \times B_{r_0}) \quad \text{for} \, \, \xi = u_{\varepsilon}, \partial_t u_{\varepsilon}, D^2_v u_{\varepsilon}.
\end{equation}
To show this, we note that
$$
    P_0 u_{\varepsilon} + \lambda u_{\varepsilon} = 0 \quad (-1, 0) \times \bR^d \times B_1.
$$
We fix $x \in \bR^d$ and write
$$
    \partial_t u_{\varepsilon} - a^{i j} D_{v_i v_j} u_{\varepsilon} + \lambda u_{\varepsilon} = v\cdot D_x u_{\varepsilon} =: \mathsf{f} \quad \text{in} \, \, (-1, 0) \times B_{1}.
$$
For $f \in L_{1, \text{loc}} (\bR^d)$, let $f^{\varkappa}$ be a mollification of $f$ in the   $v$ variable.
Then, $u_{\varepsilon}^{\varkappa}$ satisfies
$$
    \partial_t u_{\varepsilon}^{\varkappa} - a^{i j} D_{v_i v_j} u_{\varepsilon}^{\varkappa} + \lambda u_{\varepsilon}^{\varkappa} =  \mathsf{f}^{\varkappa} \quad \text{in} \, \, (-r_0^2, 0) \times B_{r_0}, \quad  \varkappa \in (0, 1 - r_0).
$$
Note that $(t, v) \to \mathsf{f} \in L_p ((-1, 0) \times B_{1})$, and then,
$\partial_t u_{\varepsilon}^{\varkappa}, D^2_v u_{\varepsilon}^{\varkappa} \in L_p ((-r_0^2, 0) \times B_{r_0})$.
By the interior estimate for nondegenerate parabolic equations (cf. Lemma 2.4.4 in \cite{Kr_08}), for any $r_1 \in (0, r_0)$,
\begin{align*}
  &  \|\lambda |u_{\varepsilon}^{\varkappa} (\cdot, x, \cdot)| +     |\partial_t u_{\varepsilon}^{\varkappa}(\cdot, x, \cdot)| + |D^2_v u_{\varepsilon}^{\varkappa}(\cdot, x, \cdot)|\|_{L_p ((-r_1^2, 0) \times B_{r_1})}\\
  &\le N (d, \delta, p, r_0, r_1) \||\mathsf{f}^{\varkappa} (\cdot, x, \cdot)| + |u_{\varepsilon}^{\varkappa} (\cdot, x, \cdot)|\|_{L_p ((-r_0^2, 0) \times B_{r_0})}.
\end{align*}
Raising the above inequality to the power $p$ and integrating over $x \in \bR^d$, we get
\begin{align*}
  &  \|\lambda |u_{\varepsilon}^{\varkappa}| +     |\partial_t u_{\varepsilon}^{\varkappa}| + |D^2_v u_{\varepsilon}^{\varkappa}|\|_{L_p ((-r_1^2, 0) \times \bR^d \times B_{r_1})}\\
  &\le N   \||\mathsf{f}^{\varkappa}| + |u_{\varepsilon}^{\varkappa} |\|_{L_p ((-r_0^2, 0) \times \bR^d \times B_{r_0})} \le N  \||u_{\varepsilon}| + |D_x u_{\varepsilon}|\|_{L_p ((-1, 0) \times \bR^d \times B_{1})},
\end{align*}
where $N = N (d, \delta, p,  r_0, r_1)$.
Passing to the limit as $\varkappa \to 0$, we conclude that \eqref{eq5.3.10} holds with $k = 0$. In the case when $k \ge 1$, we use the method of finite-difference quotients combined with the above argument.

Next, by \eqref{eq5.3.10}  with $k = 0$, $u_{\varepsilon} \in S_{p}  ((-r_0^2, 0) \times \bR^d \times B_{r_0})$.
Then, by the interior $S_p$ estimate (see Lemma \ref{lemma 5.1}), for any $r_1 \in (0, r_0)$ and $x_0\in \bR^d$,
$$
    \||\partial_t u_{\varepsilon} - v \cdot D_x u_{\varepsilon}| + |D^2_v u_{\varepsilon}|\|_{L_{p}  (Q_{r_1} (0,x_0,0)) }
    \le N \|u\|_{L_{p}  (Q_{r_0} (0,x_0,0)) },
$$
where $N = N (d, \delta, r_1, r_{0})$.
Passing to the limit as $\varepsilon \to 0$, we  prove that $u \in S_{p, \text{loc}} ((-1, 0)\times \bR^d \times B_1)$ and  that $(P_0  + \lambda) u = 0$ a.e. in
$(-1, 0)\times \bR^d \times B_1$.

To prove the second part of the assertion $(i)$, we note that by \eqref{eq5.3.10} and the Sobolev embedding theorem, for  a.e. $t, v \in ((-r_0^2, 0)\times B_{r_0})$
and the same $\xi$,
\begin{equation}
                \label{eq5.3.14}
     \xi (t, \cdot, v) \in C^k_{0} (\bR^d),\quad k \ge 1
\end{equation}
(see Definition \ref{definition 1.1}).
Therefore, by the pointwise formula \eqref{eq1.20},
$$
(-\Delta_x)^{1/6} \xi (t, \cdot, v) \in C^k_{0} (\bR^d)
$$
is a well-defined function, and
$$
    (-\Delta_x)^{1/6} A u_{\varepsilon} (t, \cdot, v) = A (-\Delta_x)^{1/6} u_{\varepsilon} (t, \cdot, v), \quad A = \partial_t, D^2_v.
$$
Then, $(P_0 + \lambda) (-\Delta_x)^{1/6} u_{\varepsilon} = 0$ a.e. in $(-r_0^2, 0) \times \bR^d \times B_{r_0}$. As above, by using the interior $S_p$ estimate and a limiting argument, we prove the part of the assertion $(i)$ about $(-\Delta_x)^{1/6} u$.

$(ii)$
In the sequel, we follow the argument of Lemma 6.6 of \cite{DY_21}.
Let $r_0 \in (r, 1)$, and  $\zeta \in C^{\infty}_0 (\widetilde Q_{r_0})$
be a function taking values in $[0, 1]$ such that
$\zeta = 1$ on $\widetilde{Q}_{r}$.
We split $D_x u_{\varepsilon}$ as follows:
$$
	\zeta^2 D_x u_{\varepsilon} = \zeta  (\mathcal{L} u_{\varepsilon} + \text{Comm}),
$$
where
\begin{align*}
	&\mathcal{L} u_{\varepsilon} = \cR_x  (-\Delta_x)^{1/3} (\zeta (-\Delta_x)^{1/6} u_{\varepsilon}),\\
&	\text{Comm} =  \zeta D_x u_{\varepsilon} - \cR_x  (-\Delta_x)^{1/3} (\zeta (-\Delta_x)^{1/6} u_{\varepsilon}),
\end{align*}
 and $\cR_x$ is the Riesz transform.

\textit{Estimate of $\mathcal{L} u$.}
Denote $h = (-\Delta_x)^{1/6} u_{\varepsilon}$.
Then, by  the assertion $(i)$, $\zeta h \in S_p (\bR^{1+2d}_0)$ satisfies the identity
\begin{equation}
            \label{eq5.3.5}
(P_0 + \lambda) (\zeta h)  = h P_0 \zeta - 2 (a D_v \zeta) \cdot D_v h \quad \text{in} \, \, \bR^{1+2d}_0.
\end{equation}
By the $L_p$-boundedness of the Riesz transform and Theorem \ref{theorem 4.10} applied to \eqref{eq5.3.5},
\begin{equation}
                \label{eq5.3.4}
\begin{aligned}
 & \|\mathcal{L} u_{\varepsilon}\|_{ L_p (\bR^{1+2d}_0) } \le N (d, p)  \|(-\Delta_x)^{1/3} (\zeta h)\|_{ L_p (\bR^{1+2d}_0) }\\
 &\le  N (d, p, \delta) \||h P_0 \zeta| +  |(a D_v \zeta) \cdot D_v h|\|_{ L_p (\bR^{1+2d}_0) }.
 \end{aligned}
\end{equation}
Furthermore, by \eqref{eq5.3.12} and the interior gradient estimate in Lemma \ref{lemma 5.1}, we get
\begin{equation}
                \label{eq5.3.7}
    \|(a D_v \zeta) \cdot D_v h \|_{ L_p (\bR^{1+2d}_0) }
    \le N  \|h\|_{ L_p (Q_{r_0}) },
\end{equation}
where $N = N (d, \delta, p, r, r_0)$.

\textit{Commutator estimate.}
We denote
$$
	\mathcal{A} =  D_x (-\Delta_x)^{-1/6}.
$$
By Lemma \ref{lemma A.2}, this operator can be extended to $C^1_0 (\bR^d)$ functions as follows:
$$
     \mathcal{A} \phi (x)  = \text{p.v.} \int  \phi (x-y)\frac{y}{|y|^{d+5/3}}\,dy.
$$
Furthermore, by the same lemma, for any $\phi \in C^2_0 (\bR^d)$, one has $A (-\Delta_x)^{1/6} \phi \equiv D_x \phi$.
Then, since $u_{\varepsilon} (t, \cdot, v) \in C^2_0  (\bR^d)$ (see \eqref{eq5.3.14}), for a.e $(t, v) \in  (-1, 0) \times B_1$,
\begin{align*}
    &    \text{Comm} (z) = \zeta \mathcal{A} h (z) - \mathcal{A} (\zeta h) (z) \\
  & =  \text{p.v.} \int h (t, x - y, v)   \big(\zeta (t, x, v) -  \zeta (t, x-y, v)\big) \, \frac{y}{|y|^{d+5/3 }} \, dy\\
&  = \bigg(\int_{|y| \le 1}  \ldots + \int_{|y| > 1} \ldots\bigg) =: I_1 (z) + I_2 (z).
\end{align*}
By the mean-value theorem and the Minkowski inequality,
\begin{equation}
                \label{eq5.3.8}
    \|I_1\|_{ L_p (Q_r) } \le N (d, p) \|h\|_{ L_p (Q_{1, 2}) }.
\end{equation}
Next, for any $z \in Q_r$, we have
$$
    |I_2| (z) \le 2 \int_{|y| > 1} |h (t, x-y, v)| \frac{dy}{|y|^{d+2/3}}.
$$
Then, by Lemma \ref{lemma 4.7},
\begin{equation}
                \label{eq5.3.9}
    \|I_2\|_{ L_p (Q_r) } \le  N (d, p)
	   \sum_{k = 0}^{\infty}
	   2^{-2k}
	  (|h|^p)^{1/p}_{ Q_{1, 2^k}  }.
\end{equation}
Combining \eqref{eq5.3.4} - \eqref{eq5.3.9} and passing to the limit as $\varepsilon \to 0$, we prove the assertion $(ii)$.
\end{proof}

The next lemma is  about estimates  for $P_0+\lambda$-caloric functions.
\begin{lemma}
           \label{lemma 5.4}
 Let  $p\in (1,\infty)$ and $u\in \bS_{p, \text{loc}}((-4, 0)\times \bR^{d} \times B_2)$
 (or  $S_{p, \text{loc}} ((-1, 0) \times \bR^d \times B_1)$) be a function such that
$P_0 u +\lambda u = 0$
in $(-1, 0) \times \bR^d \times B_{1}$.
Then,
for any
$j \in \{0, 1\},$
$l, m \in \{0, 1, \ldots\}$, the following assertions hold.

$(i)$ For any $1/2 \le r < R \le 1$,
\begin{equation}
                \label{eq5.5.1}
    (1+\lambda) \|\partial_t^j D_x^l D_v^m u\|_{ L_p (Q_{r}) }
    \le N (d, \delta, p, j, l, m, r, R) \|u\|_{ L_p (Q_R) }.
\end{equation}
Furthermore,
\begin{equation}
                \label{eq5.5.3}
    (1+\lambda) \|\partial_t^j D_x^l D_v^m u\|_{ L_{\infty} (Q_{r}) }
    \le N (d, \delta, p, j, l, m, r, R) \|u\|_{ L_p (Q_R) }.
\end{equation}

$(ii)$ If, additionally, $u \in \bS_p ((-4, 0)\times \bR^{d} \times B_1)$, and $j+l+m\ge 1$, then
\begin{equation}
            \label{eq5.5.2}
\begin{aligned}
         &\,  \|\partial_t^j D_x^l D_v^{m} u\|_{ L_p (Q_{1/2}) }\\
&  \le N (d, \delta, p, j, l, m) \big(\||D_v u|+\lambda^{1/2}|u|\|_{L_p (Q_1)}
    + \sum_{k = 0}^{\infty}
	 2^{-2k}\big(|(-\Delta_x)^{1/6} u|^p)^{1/p}_{ Q_{1, 2^k} }\big).
\end{aligned}
\end{equation}
As in the assertion $(i)$, we may replace the left-hand side of \eqref{eq5.5.2} with
$$
    \|\partial_t^j D_x^l D_v^{m} u\|_{ L_{\infty} (Q_{1/2}) }.
$$
\end{lemma}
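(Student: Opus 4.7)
The two assertions share the same bootstrap, only the ``base quantity'' on the right-hand side differs. Two structural features of $P_0$ drive the argument. First, the coefficients $a^{ij}(t)$ are independent of $x$ and $v$, so $D_x^\alpha u$ is itself $P_0+\lambda$-caloric on the same region for every multi-index $\alpha$; this allows pure $x$-derivative estimates to be iterated for free. Second, a direct computation yields the commutator identity $[P_0,D_{v_i}]=D_{x_i}$, so
$$
	(P_0+\lambda)D_{v_i} u = D_{x_i} u,
$$
meaning that each $v$-derivative of a caloric function introduces an extra $x$-derivative as a source term. Combined with the regularity furnished by Lemma \ref{lemma 5.3}(i), which licenses differentiation of the equation a.e., these two facts reduce the entire proof to iterated applications of the interior $S_p$ estimate of Lemma \ref{lemma 5.1} and the Caccioppoli-type estimate for $D_x u$ in Lemma \ref{lemma 5.2}. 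The natural order of operations is: control $x$-derivatives first, lift them to $v$-derivatives via Lemma \ref{lemma 5.1}, and finally replace any $\partial_t$ by $v\cdot D_x+a^{ij}D_{v_iv_j}-\lambda$ via the equation.

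Assertion $(i)$ is proved by induction on $k=j+l+m$ with a family of nested cylinders $Q_r\subset Q_{r'}\subset\cdots\subset Q_R$. The base case $k=0$ is immediate from Lemma \ref{lemma 5.1} with $f=0$, which gives $\lambda\|u\|_{L_p(Q_r)}\le N\|u\|_{L_p(Q_R)}$. For the inductive step with $l\ge 1$, apply the inductive hypothesis to the caloric function $D_x u$ on an intermediate cylinder and close by the bound $\|D_x u\|_{L_p}\le N\|u\|_{L_p}$ from Lemma \ref{lemma 5.2}. For $l=0,\,m\ge 1$, apply Lemma \ref{lemma 5.1} to $D_{v_i}u$, which solves $(P_0+\lambda)D_{v_i}u=D_{x_i}u$, so that the forcing is already controlled by the previous step; iterate in $m$. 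For $j=1$, use the pointwise identity $\partial_t u = v\cdot D_xu + a^{ij}D_{v_iv_j}u - \lambda u$ and the bounds just established. The $L_\infty$ version \eqref{eq5.5.3} follows from \eqref{eq5.5.1} applied with enough additional derivatives, via the standard anisotropic Sobolev embedding on $Q_r$.

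For assertion $(ii)$, the missing ingredient is supplied by Lemma \ref{lemma 5.3}(ii), which gives, for any $r'\in(1/2,1)$,
$$
	\|D_x u\|_{L_p(Q_{r'})}\le N\sum_{k=0}^\infty 2^{-2k}\bigl(|(-\Delta_x)^{1/6}u|^p\bigr)^{1/p}_{Q_{1,2^k}}.
$$
Together with the data $\|D_v u\|+\lambda^{1/2}\|u\|$ on $Q_1$ appearing on the right-hand side of \eqref{eq5.5.2}, this plays the role that $\|u\|_{L_p(Q_R)}$ played in $(i)$. The same three-step scheme then applies almost verbatim: for $l\ge 1$ apply assertion $(i)$ to the caloric function $D_x u$; for $l=0,\,m\ge 2$ apply Lemma \ref{lemma 5.1} to $D_{v_i}u$, whose forcing $D_{x_i}u$ is already controlled; for $j=1$ use the equation. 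The hypothesis $j+l+m\ge 1$ is precisely what allows us to avoid having $\|u\|_{L_p}$ itself on the right-hand side. The $L_\infty$ version is obtained by the same Sobolev embedding as in $(i)$.

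The main technical point to watch is the bookkeeping imposed by the commutator $[P_0,D_{v_i}]=D_{x_i}$: each $v$-derivative applied to a caloric function creates a new $x$-derivative as forcing, so the induction must be organized so that the $x$-tower is climbed before the matching $v$-tower is added. Once this order is fixed, every remaining step reduces to a direct application of Lemma \ref{lemma 5.1}, Lemma \ref{lemma 5.2}, or Lemma \ref{lemma 5.3}, and no new idea is required beyond careful choice of intermediate radii.
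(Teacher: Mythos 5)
Your proposal is correct and tracks the paper's own proof closely: the same commutator identity $[P_0,D_{v_i}]=D_{x_i}$ (hence $(P_0+\lambda)D_v^\alpha u=\sum_k\alpha_k D_v^{\alpha-e_k}D_{x_k}u$), the same inductive bootstrap built on Lemma \ref{lemma 5.1} over nested cylinders, the same use of Lemma \ref{lemma 5.2} to convert pure $x$-derivatives back to $\|u\|$, Sobolev embedding for the $L_\infty$ upgrades, and the same use of Lemma \ref{lemma 5.3}$(ii)$ as the replacement base for $\|D_x u\|$ in assertion $(ii)$. The differences are purely in the bookkeeping of the case analysis over $(j,l,m)$ (including the small observation that one needs \eqref{eq5.5.1} to downgrade a stray $\lambda\|u\|$ to $\lambda^{1/2}\|u\|$ in Case 4 of part $(ii)$), not in the substance of the argument.
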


\begin{proof}
$(i)$
By Lemma \ref{lemma 5.3} $(i)$, $u \in S_{p, \text{loc}} ((-1, 0)\times \bR^d \times B_1)$. In the sequel, we follow the argument of Lemma 5.6 $(i)$ in \cite{DY_21}.

\textit{Case $l, j = 0.$} First, we prove that for any $r \in (1/2, R)$  and $m = \{0, 1, 2, \ldots\}$,
\begin{equation}
            \label{5.4.1}
  \lambda \|D^{m}_v u\|_{ L_p (Q_r) } +   \|D^{m+1}_v u\|_{ L_p (Q_r) }
    \le N (d, \delta, p, r, m)
    \|u\|_{ L_p (Q_R) }.
\end{equation}
To prove this, we use an induction argument. Note that \eqref{5.4.1}
with $m = 0$ follows directly from  Lemma \ref{lemma 5.1}.
In the rest of the argument, we do some formal calculations. To make the argument rigorous, one needs to use the method of finite-difference quotient.
For $m > 0$,
we fix some multi-index  $\alpha$
of order $m$.
Then, by the product rule,
\begin{equation}
			\label{5.4.2}
	(P_0 +\lambda) (D^{\alpha}_v u)
	= \sum_{  \widetilde \alpha:  \, \widetilde \alpha < \alpha, |\widetilde \alpha| = m-1 } c_{\widetilde \alpha}
	D^{\widetilde \alpha}_v D^{\alpha - \widetilde \alpha}_{x} u,
\end{equation}
where $c_{\widetilde \alpha}$ is a constant.
Next, for any $r_1 \in (r, R)$, by Lemma \ref{lemma 5.1}, we have
\begin{equation}
            \label{5.4.3}
            \begin{aligned}
 & \lambda \|D^{m}_v u\|_{ L_p (Q_r) }  +  \| D^{m+1}_v u \|_{ L_p (Q_r)}\\
   & \leq N  \| D^{m-1}_v D_x u \|_{ L_p (Q_{r_1}) }
+N  \| D^{m}_v u \|_{ L_p (Q_{r_1}) }.
\end{aligned}
\end{equation}
Observe that for any multi-index $\beta$,
\begin{equation}
            \label{5.4.4}
    (P_0 +\lambda)(D_x^{\beta} u) = 0 \quad \text{in} \, \, (-1, 0) \times \bR^d \times B_1.
\end{equation}
Then, by the induction hypothesis and Lemma \ref{lemma 5.2},
for any $r_2 \in (r_1, R)$, we have
$$
    \| D^{m-1}_v D_x u \|_{ L_p (Q_{r_1}) }
    \le N  \| D_x u\|_{ L_p (Q_{r_2})}
    \le N  \| u\|_{ L_p (Q_{R})}.
$$
This combined with \eqref{5.4.3} and the induction hypothesis implies \eqref{5.4.1}.

\textit{Case $j = 0$.} Combining \eqref{5.4.4}, \eqref{5.4.1}, and Lemma \ref{lemma 5.2}, we obtain \eqref{eq5.5.1} with $j =  0$.

\textit{Case $j = 1$.} By \eqref{5.4.2} and \eqref{5.4.4}, for any multi-indexes $\alpha \neq 0$ and $\beta$, the function
\begin{equation}
\label{eq5.5.5}
    U = D_v^{\alpha} D_x^{\beta} u
\end{equation}
satisfies the identity
\begin{equation}
\label{eq5.4.12}
    P_0 U  + \lambda U= \sum_{  \widetilde \alpha:  \, \widetilde \alpha < \alpha, |\widetilde \alpha| = |\alpha|-1 } c_{\widetilde \alpha}
	D^{\widetilde \alpha}_v D^{\alpha + \beta - \widetilde \alpha}_{x} u \quad \text{in} \, \, (-1, 0) \times \bR^d \times B_1.
\end{equation}
Then, by  Lemma \ref{lemma 5.1} and \eqref{eq5.5.1} with $j = 0$, we conclude
\begin{equation}
\label{eq5.4.13}
\begin{aligned}
   & (1+\lambda)\|\partial_t U\|_{ L_p (Q_r) } \le  (1+\lambda) \big(\|\partial_t U - v \cdot D_x U\|_{ L_p (Q_r) } + r \|D_x U\|_{ L_p (Q_r) }\big) \\
   & \le N (1+\lambda) (\|U\|_{ L_p (Q_{r_1}) } + \|D^{|\alpha| - 1}_v D^{1 +|\beta|}_x u\|_{L_p(Q_{r_1})} \\
   &\quad + \|D_x U\|_{ L_p (Q_{r})}) \le N \|u\|_{L_p (Q_R)},
    \end{aligned}
\end{equation}
    where $N = N (d, \delta, |\alpha|, |\beta|, r, R)$.
 In the case     $\alpha = 0$, the above argument yields the same bound $ (1+\lambda) \|\partial_t U\|_{ L_p (Q_r) }  \le N \|u\|_{ L_p (Q_R) } $.
 Thus, \eqref{eq5.5.1} with $j =1$ is also valid.

Next, note that the second assertion with $j = 0$ follows from \eqref{eq5.5.1} and the Sobolev embedding theorem. To prove the estimate with $j = 1$, we use  \eqref{eq5.4.12},
 \eqref{eq5.5.3} with $j = 0$, and Lemma \ref{lemma 5.1}:
\begin{align}
\label{eq5.5.4}
    &(1+\lambda)\|\partial_t U\|_{ L_{\infty} (Q_{r})  }
    \le (1+\lambda)\||D_x U| + |D^2_v U| + \lambda |U|\|_{ L_{\infty} (Q_{r})  }\notag\\
    &\le N (1+\lambda)\|u\|_{ L_{p} (Q_{R/2+1/4}) }
    \le N\|u\|_{ L_{p} (Q_{R}) }.
\end{align}

$(ii)$ 
It suffices to show the validity of the estimate
 \begin{equation}
                \label{eq5.4.9}
 \begin{aligned}
         & \|\partial_t^j D_x^l D_v^{m} u\|_{ L_p (Q_{1/2}) }\\
          &\le N (d, \delta, R, j, l, m) (\||D_x u| + |D_v u| + \lambda^{1/2} |u|\|_{ L_p (Q_{R}) }), R \in (1/2, 1],
\end{aligned}
\end{equation}
 because the  desired assertion follows from \eqref{eq5.4.9} and Lemma \ref{lemma 5.3} $(ii)$. To prove  \eqref{eq5.4.9}, we will consider four cases.

 \textit{Case 1: $l \ge 1$.} Note that by \eqref{eq5.5.1} and \eqref{5.4.4}, one has for $1/2 < r < r_1 < R$,
 $$
    \|\partial_t^j D^l_x D^m_v u\|_{ L_p (Q_{r}) } \le N \|D_x^l u\|_{ L_p (Q_{r_1}) }.
 $$
 Hence, \eqref{eq5.4.9} holds in the case $l = 1$.
 If $l \ge 2$, we use Lemma \ref{lemma 5.2}.

 \textit{Case 2: $l = 0, m \ge 1, j = 0$}. By using an induction argument (see \eqref{5.4.3}) and Lemma \ref{lemma 5.2} as in the proof of the assertion $(i)$,
 one can show that
  \begin{equation}
                \label{eq5.4.10}
    \|D^m_v u\|_{L_p(Q_r)} \le N \||D_x u| + |D_v u|\|_{L_p(Q_R)}.
 \end{equation}

 \textit{Case 3: $l = 0, m \ge 1, j = 1$.} By \eqref{eq5.4.13} with $|\alpha| = m$ and $\beta = 0$,
 $$
    \|\partial_t D^m_v u\| \le N \||D^m_v u| + |D^{m-1}_v D_x u| + |D_x u|\|_{L_p (Q_{r_1})}.
 $$
 Now \eqref{eq5.4.9} follows from \eqref{eq5.4.10} and  \eqref{eq5.4.9} with $l  = 1$ (see Case 1).

\textit{Case 4: $l = 0, m = 0, j = 1$.} Since $\partial_t u = v \cdot D_x u - a^{i j} (t) D_{v_i v_j} u-\lambda u$ in $(-1, 0) \times \bR^d \times B_1$, by using \eqref{eq5.4.10} with $m = 2$, we get
  \begin{equation}
                \label{eq5.4.11}
    \|\partial_t u\|_{L_p(Q_r)} \le N \||D_x u| + |D_v u| + \lambda |u|\|_{L_p(Q_{R_1})}, \, \, R_1 \in (R, 1).
\end{equation}
Let $R_2 \in (R_1, 1)$.
By \eqref{eq5.5.1}, we may replace the term $\lambda \|u\|_{L_p(Q_{R_1})}$ with $\lambda^{1/2} \|u\|_{ L_p (Q_{R_2}) }$ on the right-hand side of \eqref{eq5.4.11}, and, thus, \eqref{eq5.4.9} holds.

Finally, the second part of the assertion in the case when $j = 0$ follows from \eqref{eq5.5.2} and the Sobolev embedding theorem.

In the case when $j =1$, we invoke \eqref{eq5.5.5} - \eqref{eq5.4.12}. By \eqref{eq5.5.4} and the $L_{\infty} (Q_{1/2})$ estimate of $D_x U$ and $D^2_v U$ proved in the previous paragraph, we get
\begin{align*}
       & \|\partial_t U\|_{ L_{\infty} (Q_{1/2})  }\le \||D_x U| + |D^2_v U| + \lambda |U|\|_{ L_{\infty} (Q_{1/2})  }\\
       &\le N \||D_x u| + |D_v u|+\lambda^{1/2}|u|\|_{ L_{p} (Q_{ 1}) } + \lambda \|U\|_{ L_{\infty} (Q_{1/2}) }.
\end{align*}
By \eqref{eq5.5.3}, we  replace the last term with $\lambda^{1/2} \|u\|_{ L_{p} (Q_{1}) } $. The assertion is proved.
\end{proof}

 The next result follows from direct computations.
\begin{lemma}[Scaling property of $P_0$]
			\label{lemma 4.1}
Let $p\in [1,\infty]$, $T \in (-\infty, \infty]$, and
$u \in \bS_{p, \text{loc} } (\bR^{1+2d}_T)$.
For any $z_0 \in \bR^{1+2d}_T$, denote
\begin{align}
\label{eq12.3.2}
 &   	\widetilde z = (r^2 t + t_0, r^3 x + x_0 -  r^2 t v_0, r v + v_0),\quad
	\widetilde u (z) = u (\widetilde z),\\
&
    Y = \partial_t  - v \cdot D_x,
\quad    \widetilde P_0 = \partial_t
     - v \cdot D_x
     - a^{ij} ( r^2 t + t_0 ) D_{v_i v_j}\notag.
 \end{align}
Then,
$$
    Y \widetilde u (z) = r^2 (Y u) (\widetilde z),
\quad		\widetilde P_0 \widetilde u (z) = r^2 (P u) (\widetilde z).
$$
\end{lemma}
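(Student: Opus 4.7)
The proof is a direct chain rule computation, and the only thing to track carefully is how the velocity shift $v_0$ gets absorbed into the transport term. I would organize it as follows.

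First, for a smooth $u$, I would write $\widetilde z = (\tau, \xi, \eta)$ with $\tau = r^2 t + t_0$, $\xi = r^3 x + x_0 - r^2 t v_0$, $\eta = r v + v_0$, and compute each partial derivative of $\widetilde u(z) = u(\widetilde z)$ via the chain rule:
\begin{align*}
\partial_t \widetilde u(z) &= r^2 (\partial_\tau u)(\widetilde z) - r^2 v_0 \cdot (D_\xi u)(\widetilde z),\\
D_{x_i} \widetilde u(z) &= r^3 (D_{\xi_i} u)(\widetilde z),\\
D_{v_i} \widetilde u(z) &= r (D_{\eta_i} u)(\widetilde z),\qquad D_{v_i v_j}^2 \widetilde u(z) = r^2 (D_{\eta_i \eta_j}^2 u)(\widetilde z).
\end{align*}

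Second, combining the first two lines, the transport operator evaluates to
\[
Y \widetilde u(z) = r^2 (\partial_\tau u)(\widetilde z) - r^2 v_0 \cdot (D_\xi u)(\widetilde z) - r^3 v \cdot (D_\xi u)(\widetilde z) = r^2 \bigl[(\partial_\tau u) - (v_0 + r v) \cdot (D_\xi u)\bigr](\widetilde z),
\]
and since $v_0 + r v = \eta$, the bracket is exactly $(Y u)(\widetilde z)$, giving the first identity. For the second identity, I use that $a^{ij}$ depends only on $t$, so $a^{ij}(r^2 t + t_0) = a^{ij}(\tau)$, and by the scaling of $D^2_v$,
\[
a^{ij}(r^2 t + t_0) D_{v_i v_j}^2 \widetilde u(z) = r^2 a^{ij}(\tau) (D^2_{\eta_i \eta_j} u)(\widetilde z).
\]
Subtracting this from $Y \widetilde u(z) = r^2 (Yu)(\widetilde z)$ produces exactly $r^2 (P_0 u)(\widetilde z)$.

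Third, for general $u \in \bS_{p,\text{loc}}(\bR^{1+2d}_T)$, the identities $D_x \widetilde u = r^3 (D_x u)(\widetilde z)$ and $D_v \widetilde u = r (D_v u)(\widetilde z)$ hold distributionally (and in $L_{p,\text{loc}}$ after an affine change of variables in the integrals that define weak derivatives). The transport term $Y \widetilde u = \partial_t \widetilde u - v \cdot D_x \widetilde u$ is an element of $\bH^{-1}_{p,\text{loc}}$, and testing against a smooth compactly supported $\varphi$, one performs the same change of variables $z \mapsto \widetilde z$ and uses the smooth version of the identity already proved to transfer derivatives onto $\varphi$; this shows that $Y\widetilde u(z) = r^2 (Y u)(\widetilde z)$ as distributions, and hence so is the $\widetilde P_0$ identity once the $L_{p,\text{loc}}$ scaling of $D^2_v u$ (or rather its distributional counterpart, which is still well-defined from the equation) is taken into account. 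There is no real obstacle — the whole lemma is bookkeeping — but the one step worth being careful about is verifying that the affine substitution $v_0 + r v = \eta$ exactly cancels the unwanted $-r^2 v_0 \cdot D_x$ piece that appears in $\partial_t \widetilde u$, which is precisely why the quasi-metric \eqref{1.2} and cylinders $Q_{r,R}(z_0)$ are defined with the drift-adjusted variable $x - x_0 + (t - t_0) v_0$.
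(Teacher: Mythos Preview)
Your proof is correct and is exactly the direct chain-rule computation the paper has in mind; the paper itself dispenses with the details entirely, stating only that the result ``follows from direct computations.'' Your careful tracking of how the $-r^2 v_0\cdot D_\xi u$ term from $\partial_t\widetilde u$ combines with $-r^3 v\cdot D_\xi u$ to produce $-r^2\eta\cdot D_\xi u$ is precisely the point, and your remark on the distributional extension to $\bS_{p,\text{loc}}$ is a welcome clarification that the paper omits.
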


\begin{proof}[Proof of Proposition \ref{proposition 12.3}]
    Let $\widetilde u$ and $\widetilde P_0$ be the function
    and the operator from Lemma \ref{lemma 4.1} defined  with $\nu r$ in place of $r$.
Then, by the same lemma,
$$
   \widetilde P_0 \widetilde  u  + \lambda (\nu r)^2 \widetilde u = 0 \quad \text{in} \ (- 1, 0) \times \bR^d \times B_{ 1 },
$$
and for any $c > 0$, and $A = (-\Delta_x)^{1/6}$ or $D_v$,
\begin{equation}
    \label{eq12.3.1}
 \begin{aligned}
	 &(|A u|^p)^{1/p}_{Q_{\nu r, c \nu r} (z_0)}
	 = (\nu r)^{-1}	  (|A \widetilde u|^p)^{1/p}_{Q_{1, c} },\\
   &  \bigg(|A u -  (A u)_{Q_r (z_0)}|^p\bigg)^{1/p}_{Q_r (z_0)}
	= (\nu r)^{-1}
	 \bigg(|A \widetilde u - (A\widetilde u)_{ Q_{1/\nu}}|^p\bigg)_{ Q_{1/\nu}}^{1/p}.
 \end{aligned}
\end{equation}

Next,  by Lemma \ref{lemma 5.3} $( i)$, $(-\Delta_x)^{1/6}\widetilde u \in S_{p, \text{loc}} ((-1, 0)\times \bR^d \times B_1)$, and
$$
 (\widetilde P_0 + \lambda (\nu r)^2)  (-\Delta_x)^{1/6}\widetilde u   = 0 \quad  \text{a.e. in} \ (- 1, 0) \times \bR^d \times B_{  1 },
$$
and then, by Lemma  \ref{lemma 5.4} $(i)$ with $u$ replaced with $(-\Delta_x)^{1/6} \widetilde u$,
for any $\nu \ge  2$, we get
\begin{align*}
 	&\bigg(|(-\Delta_x)^{1/6} \widetilde u - ((-\Delta_x)^{1/6} \widetilde u)_{Q_{1/\nu}}|^p\bigg)^{1/p}_{ Q_{1/\nu}} \\
 & \leq  \sup_{ z_1, z_2 \in Q_{1/\nu} } |(-\Delta_x)^{1/6} \widetilde u (z_1)  - (-\Delta_x)^{1/6} \widetilde u (z_2)|\\
	 	&\leq N(d, \delta, p) \nu^{-1}  \bigg(\fint_{Q_1} |(-\Delta_x)^{1/6} \widetilde u|^p \, dz\bigg)^{1/p}.
\end{align*}
Combining this with  \eqref{eq12.3.1}, we prove the estimate for $(-\Delta_x)^{1/6} u$.

 Arguing as above  and using  Lemma   \ref{lemma 5.4} $(ii)$,
for any $\nu \ge  2$, we obtain
\begin{align*}	& \lambda^{1/2} \nu r \big(| \widetilde u - (\widetilde u)_{Q_{1/\nu}}|^p\big)^{1/p}_{ Q_{1/\nu}}
+
\big(| D_v \widetilde u - (D_v \widetilde u)_{Q_{1/\nu}}|^p\big)^{1/p}_{ Q_{1/\nu}} \\
&
\le N \nu^{-1} \bigg(\lambda^{1/2} \nu r \,  (|\widetilde u|^p)^{1/p}_{ Q_1 } +  (|D_v \widetilde u|^p)^{1/p}_{ Q_1 }
    +   \sum_{k = 0}^{\infty}
	 2^{-2k}(|(-\Delta_x)^{1/6} \widetilde u|^p)^{1/p}_{ Q_{1, 2^k} }\bigg).
\end{align*}
Dividing both sides of the above inequality by $\nu r$ and using \eqref{eq12.3.1} yield the desired estimate.
\end{proof}

\subsection{Proof of Theorem \ref{theorem 12.1}}

The following mean oscillation estimate plays a crucial  role in the proofs of Theorems \ref{theorem 12.1} and \ref{theorem 1.1}.

\begin{proposition}
                    \label{proposition 12.4}
Let $p > 1$, $r > 0$, $\nu \geq  2$ be numbers,
$T \in (-\infty, \infty]$,
 $z_0 \in \overline{\bR^{1+2d}_T}$,
and
$u \in \bS_{p } (\bR^{1+2d}_T)$,
$\vec f, g  \in L_{p} (\bR^{1+2d}_T)$  be functions such that
$$
    P_0 u +\lambda u= \div \vec f +g.
$$
Then, there exists a constant $N = N (d, \delta, p) > 0$ such that
 \begin{align*}
    I_1 &:=
     \bigg(|(-\Delta_x)^{1/6} u -  ((-\Delta_x)^{1/6} u)_{ Q_r (z_0) }|^p\bigg)^{1/p}_{ Q_r (z_0) } \\
&\leq N  \nu^{-1}  (|(-\Delta_x)^{1/6} u|^p)^{1/p}_{ Q_{\nu r } (z_0)}\\
  &  \quad + N  \nu^{(4d+2)/p}
    \sum_{k=0}^\infty 2^{-k}
	\big(  (|\vec f|^p)^{1/p}_{  Q_{2 \nu r, 2^{k+ 1} (2\nu r) } (z_0)  }
	  + \lambda^{-1/2}(|g|^p)^{1/p}_{  Q_{ 2 \nu r, 2^{k+ 1} (2\nu r) } (z_0)  }\big),
 \end{align*}
  \begin{align*}
 I_2 &:= \lambda^{1/2} \bigg(|u -  (u)_{ Q_r (z_0) }|^p\bigg)^{1/p}_{ Q_r (z_0)}
 	+ \bigg(|D_v u -  (D_v u)_{ Q_r (z_0) }|^p\bigg)^{1/p}_{ Q_r (z_0)}\\
 &
 	 \leq N   \nu^{-1} \lambda^{1/2} (|u|^p)^{1/p}_{ Q_{\nu r} (z_0)  } +
 	 N   \nu^{-1}
    (|D_v u|^p)^{1/p}_{ Q_{\nu r} (z_0)  } \\
    &\quad + N \nu^{-1}
    \sum_{k = 0}^{\infty}
	 2^{-2k}
	 (|(-\Delta_x)^{1/6} u|^p)^{1/p}_{ Q_{\nu r, 2^k \nu r} (z_0) }\\
&\quad   + N  \nu^{(4d+2)/p}
    \sum_{k=0}^\infty 2^{-k}
	\big(  (|\vec f|^p)^{1/p}_{  Q_{ 2 \nu r, 2^{k+ 1} (2\nu r) } (z_0)  }
	  + \lambda^{-1/2}(|g|^p)^{1/p}_{  Q_{2 \nu r, 2^{k+ 1} (2\nu r) } (z_0)  }\big),
  \end{align*}
\end{proposition}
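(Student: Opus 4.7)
The approach is to split $u = w + h$, where $w$ carries the source data localized near $Q_{\nu r}(z_0)$ and $h := u - w$ is $(P_0+\lambda)$-caloric on a neighborhood of $Q_{\nu r}(z_0)$, so that Lemma \ref{lemma 12.1} applies to $w$ and Proposition \ref{proposition 12.3} to $h$. By translation invariance we may assume $z_0 = 0$. Set
$$
E = (-(2\nu r)^2, 0)\times\bR^d\times B_{2\nu r},\qquad \vec f_1 = \vec f\,\mathbf{1}_E,\qquad g_1 = g\,\mathbf{1}_E,
$$
and let $w\in\bS_p((-(2\nu r)^2, 0)\times\bR^{2d})$ be the unique solution (Theorem \ref{theorem 11.3}(ii)) of $P_0 w + \lambda w = \div\vec f_1 + g_1$ with $w(-(2\nu r)^2,\cdot)\equiv 0$, extended by zero for earlier times. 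Since $\vec f - \vec f_1$ and $g - g_1$ vanish on $E \supset (-(\nu r)^2, 0)\times\bR^d\times B_{\nu r}$, the remainder $h$ satisfies $(P_0+\lambda)h = 0$ on $(-(\nu r)^2, 0)\times\bR^d\times B_{\nu r}$, which is precisely the hypothesis of Proposition \ref{proposition 12.3}.

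For the $w$-piece we rescale by $\rho = 2\nu r$ via Lemma \ref{lemma 4.1}; the rescaled $\widetilde w$ solves a KFP-type equation with rescaled parameter $\widetilde\lambda = \rho^2\lambda$ and source supported in $(-1,0)\times\bR^d\times B_1$. Applying \eqref{12.2} and \eqref{12.3} with $R = 1$ and undoing the rescaling yields
\begin{align*}
&\|\lambda^{1/2}|w| + |D_v w|\|_{L_p(Q_{2\nu r})}\\
&\qquad\le N\sum_{k=0}^{\infty} 2^{-k(k-1)/4}\bigl\||\vec f| + \lambda^{-1/2}|g|\bigr\|_{L_p(Q_{2\nu r,\,2^{k+1}(2\nu r)})},
\end{align*}
along with the analogous average bound for $(|(-\Delta_x)^{1/6}w|^p)^{1/p}_{Q_{2\nu r}}$ from \eqref{12.3}. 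Converting the left-hand side $L_p$-norm over $Q_{2\nu r}$ into an average over $Q_r$ produces a factor $|Q_{2\nu r}|^{1/p}/|Q_r|^{1/p}=(2\nu)^{(4d+2)/p}$, while turning each right-hand side $L_p$-norm into an average costs $2^{3d(k+1)/p}$. Since $2^{3dk/p}\cdot 2^{-k(k-1)/4}\le C_{d,p}\,2^{-k}$ for every $k$, one obtains
$$
\lambda^{1/2}(|w|^p)^{1/p}_{Q_r} + (|D_v w|^p)^{1/p}_{Q_r} + (|(-\Delta_x)^{1/6}w|^p)^{1/p}_{Q_r}\le N\nu^{(4d+2)/p}\mathcal S,
$$
where $\mathcal S$ denotes the source series on the right-hand side of Proposition \ref{proposition 12.4}.

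The final step assembles the pieces through the triangle inequality for mean oscillations: for each $X \in \{(-\Delta_x)^{1/6},\, \lambda^{1/2}\mathrm{Id},\, D_v\}$,
$$
\bigl(|Xu - (Xu)_{Q_r}|^p\bigr)^{1/p}_{Q_r} \le \bigl(|Xh - (Xh)_{Q_r}|^p\bigr)^{1/p}_{Q_r} + 2\bigl(|Xw|^p\bigr)^{1/p}_{Q_r}.
$$
The first summand on the right is estimated by Proposition \ref{proposition 12.3} in terms of $\nu^{-1}$ times averages of $Xh$ over $Q_{\nu r}$ (and, for $X = D_v$, additionally by the series $\sum_k 2^{-2k}(|(-\Delta_x)^{1/6}h|^p)^{1/p}_{Q_{\nu r,\,2^k\nu r}}$); inside every such average we split $h = u - w$, so that the $u$-parts assemble into the first block of terms of the conclusion and the $w$-parts are absorbed into $\mathcal S$ via the previous step. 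The second summand $2(|Xw|^p)^{1/p}_{Q_r}$ is handled directly by the same step. The main technical difficulty is scale bookkeeping: verifying the inclusion chain $Q_r \subset Q_{\nu r} \subset Q_{2\nu r}$ and the support relations that guarantee $h$ is caloric on the correct cylinder, and, most importantly, checking that the volumetric factors $2^{3dk/p}$ and $(2\nu)^{(4d+2)/p}$ introduced by the $L_p$-to-average conversions combine with the super-exponential decay $2^{-k(k-1)/4}$ from Lemma \ref{lemma 12.1} to produce exactly the advertised $\nu^{(4d+2)/p}\sum_k 2^{-k}$ structure.
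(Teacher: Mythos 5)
Your decomposition $u = w + h$, with $w$ the localized solution and $h$ the $(P_0+\lambda)$-caloric remainder, mirrors the paper's decomposition $u = u_0 + u_h$ almost exactly; the only cosmetic difference is that you use the sharp cutoff $\mathbf{1}_E$ where the paper uses a smooth cutoff $\phi(t,v)$, and this does not cause any trouble. The volume bookkeeping you sketch (the factors $(2\nu)^{(4d+2)/p}$ and $2^{3dk/p}$, and the absorption of $2^{3dk/p}$ into $2^{-k(k-1)/4}$) is also the same as the paper's.

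There is, however, a genuine gap in your handling of the $I_2$ estimate. In Step 3 you correctly observe that applying Proposition \ref{proposition 12.3} to $h$ and splitting $h = u - w$ produces, among other things, the term
\[
\nu^{-1}\sum_{k=0}^\infty 2^{-2k}\bigl(|(-\Delta_x)^{1/6}w|^p\bigr)^{1/p}_{Q_{\nu r,\,2^k\nu r}(z_0)},
\]
and you assert that this is ``absorbed into $\mathcal S$ via the previous step.'' But your previous step only applies \eqref{12.2}--\eqref{12.3} with $R=1$, which after rescaling controls $(|(-\Delta_x)^{1/6}w|^p)^{1/p}_{Q_{2\nu r}}$ (and hence $(|(-\Delta_x)^{1/6}w|^p)^{1/p}_{Q_r}$), \emph{not} the averages over the dilated cylinders $Q_{\nu r,\,2^k\nu r}$ for $k\ge 1$. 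To control the whole tail one must invoke the rescaled Lemma \ref{lemma 12.1} with $R = 2^k$ for \emph{every} $k$, which yields a double series $\sum_j 2^{-2j}\sum_k 2^{-k}F_k(2^j)$, and then exploit the scaling identity $F_k(2^j) = F_{k+j}(1)$ together with a reindexing $k\mapsto k+j$ to collapse it to $N\sum_j 2^{-j}F_j(1)$; this is precisely the estimate of $A_1$ in the paper's proof. Without this step the source contribution from the tail of the series is not bounded. The remaining reasoning (triangle inequality for mean oscillations, the $u$-parts assembling into the first block, the $k=0$ terms of $w$) is fine.
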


\begin{proof}
\textit{Estimate of $I_1$.}
We fix some function $\phi = \phi (t, v) \in C^{\infty}_0 ((t_0 - ( 2\nu r)^2, t_0  + (2\nu r)^2) \times B_{2\nu r} (v_0))$
such that $\phi = 1$
on
$
(t_0 - (\nu r)^2, t_0) \times B_{\nu r} (v_0).
$
By Theorem \ref{theorem 11.3} $(ii)$,
the Cauchy problem (see Definition \ref{definition 2.1})
$$
    P_0 u_0  = \div (\vec f \phi) + g \phi, \quad u_0 (t_0 - (2 \nu r)^2, \cdot) \equiv 0,
$$
has a unique solution $u_0 \in \bS_p ((t_0 - (4  \nu r)^2, t_0) \times \bR^{2d})$.
To obtain a mean-oscillation estimate of
$(-\Delta_x)^{1/6} u_0$, we use the argument of Proposition \ref{proposition 12.3}.
Let $\widetilde u_0$ and $\widetilde P_0$ be the function
    and the operator from Lemma \ref{lemma 4.1} defined  with
    $ 2 \nu r$ in place of $r$ and with $u_0$  replaced with $u$. We define the functions $\widetilde  f_i, i = 1, \ldots, d$ and $\widetilde g$ by \eqref{eq12.3.2}.
Then, we have
$$
	\widetilde P_0 \widetilde u_0 + \lambda ( 2\nu r)^2 \widetilde u_0 = ( 2 \nu r) D_{v_i} \widetilde f_i + ( 2 \nu r)^2 \widetilde g \quad \text{in} \, \, (-1, 0) \times \bR^d \times B_1.
$$	
 By Lemma \ref{lemma 12.1} with
\begin{itemize}
\item[--] $(2 \nu r)^2 \lambda$ in place of $\lambda$,
\item[--]  $2 \nu r\widetilde f_i$ in place of $f_i ,i  =1, \ldots, d$,
\item[--] $(2\nu r)^2 \widetilde g$ in place of $g$,
\end{itemize}
 for any $R \ge 1$,
    \begin{align*}
        &    (|(-\Delta_x)^{1/6} \widetilde u_0|^p)^{1/p}_{Q_{1, R}}\\
   & \le N (d, p, \delta) ( 2 \nu r)  \sum_{k = 0}^{\infty} 2^{-k}   \big(\sum_{i = 1}^d(|\widetilde f_i|^p)^{1/p}_{ Q_{1, 2^{k+ 1  } R}  } + \lambda^{-1/2} (|\widetilde g|^p)^{1/p}_{ Q_{1, 2^{k+ 1  } R}  }\big) .
    \end{align*}

By dividing both sides of the above inequality by $2\nu r$ and using \eqref{eq12.3.1} with $\nu r$ replaced with $2 \nu r$, for any $R \ge 1$, we obtain
\begin{align}
    \label{12.3.1}
        (|(-\Delta_x)^{1/6} u_0|^p)^{1/p}_{Q_{\nu r, R\nu r} (z_0)}&\le N \sum_{k = 0}^{\infty} 2^{-k} F_k (R) ,\\
    \label{12.3.2}
    (|(-\Delta_x)^{1/6} u_0|^p)^{1/p}_{Q_{ r, Rr } (z_0)}
   & \le N \nu^{(4d+2)/p} \sum_{k = 0}^{\infty} 2^{-k}  F_k (R),
 \end{align}
 where
 $$
    F_k (R) = (|\vec f|^p)^{1/p}_{ Q_{2 \nu r,  2^{k+ 1  } R ( 2\nu r) } (z_0) } + \lambda^{-1/2}(|g|^p)^{1/p}_{  Q_{ 2\nu r, 2^{k+ 1} R  (2 \nu r)  } (z_0)  }.
$$
 Next, note that the function $u_h : =u - u_0 \in \bS_{p} ((t_0 - (2 \nu r)^2) \times \bR^{2d})$ satisfies
 $$
    P_0 u_h  = \div (\vec f (1-\phi)) +g (1-\phi) \quad \text{in} \, \, (t_0 - (2\nu r)^2, t_0) \times \bR^{2d}.
 $$
 Since $\vec f (1-\phi)$ and $g (1-\phi)$ vanish inside $(t_0 - ( \nu r)^2, t_0) \times \bR^d \times B_{ \nu r},$ by Proposition \ref{proposition 12.3}  and \eqref{12.3.1} with $R = 1$,
 \begin{align*}
    &    \bigg(|(-\Delta_x)^{1/6} u_h -  ((-\Delta_x)^{1/6} u_h)_{ Q_r (z_0) }|^p\bigg)^{1/p}_{Q_r (z_0)}
\leq N  \nu^{-1}
    (|(-\Delta_x)^{1/6} u_h|^p)^{1/p}_{ Q_{\nu r } (z_0)}\\
 &
    \le N  \nu^{-1}
    (|(-\Delta_x)^{1/6} u|^p)^{1/p}_{ Q_{\nu r } (z_0)}
  +  N \nu^{-1}  \sum_{k = 0}^{\infty} 2^{-k}  F_k (1).
   \end{align*}
 Finally, the mean oscillation estimate of $(-\Delta_x)^{1/6} u$ follows from the above inequality and \eqref{12.3.2} with $R = 1$.

 \textit{Estimate of $I_2$.}
 First, by Lemma \ref{lemma 12.1} and  the scaling argument presented above,
 \begin{align}
    \label{12.3.3}
     \lambda^{1/2} (|u_0|^p)^{1/p}_{Q_{\nu r} (z_0)} + (|D_v u_0|^p)^{1/p}_{Q_{\nu r} (z_0)}
    &  \le N \sum_{k = 0}^{\infty} 2^{-k^2/8} F_k (1), \\
    \label{12.3.4}
   \lambda^{1/2} (|u_0|^p)^{1/p}_{Q_{ r} (z_0)} + (|D_v u_0|^p)^{1/p}_{Q_{ r} (z_0)}
   & \le N \nu^{(4d+2)/p} \sum_{k = 0}^{\infty} 2^{-k^2/8} F_k (1).
 \end{align}
 Hence, as above, by \eqref{12.3.4}, it remains to estimate $I_2$  with $u$ replaced with $u_h$.

Next, by Proposition \ref{proposition 12.3}, we get
\begin{equation*}
\begin{aligned}
& \lambda^{1/2} \bigg(|u_h -  (u_h)_{ Q_r (z_0) }|^p\bigg)^{1/p}_{ Q_r (z_0)}
 	+
\bigg(|D_v u_h -  (D_v u_h)_{ Q_r (z_0) }|^p\bigg)^{1/p}_{ Q_r (z_0) } \\
 &
 \leq N   \nu^{-1}
    \lambda^{1/2} (|u|^p)^{1/p}_{ Q_{\nu r} (z_0)  }+
 N   \nu^{-1}
    (|D_v u|^p)^{1/p}_{ Q_{\nu r} (z_0)  }\\
    &\quad + N \nu^{-1}
    \sum_{j = 0}^{\infty}
	 2^{-2j}
	 (|(-\Delta_x)^{1/6} u|^p)^{1/p}_{ Q_{\nu r, 2^j \nu r} (z_0)}\\
&\quad + N \nu^{-1} (A_1 + A_2),
\end{aligned}
\end{equation*}
  where
  \begin{align*}
   &       A_1 =
    \sum_{j = 0}^{\infty}
	 2^{-2j}
	 (|(-\Delta_x)^{1/6} u_0|^p)^{1/p}_{ Q_{\nu r, 2^j \nu r} (z_0)}, \\
  &
	 A_2 = \lambda^{1/2} (|u_0|^p)^{1/p}_{ Q_{\nu r} (z_0)   } + (|D_v u_0|^p)^{1/p}_{ Q_{\nu r} (z_0)  }.
     \end{align*}
By \eqref{12.3.1} with $R = 2^j$, we get
  $$
    A_1 \le N \sum_{j = 0}^{\infty} 2^{-2j }   \sum_{k = 0}^{\infty} 2^{-k}
    F_k (2^j).
  $$
Using the fact that $F_k (2^j) = F_{k+j} (1)$ and changing the index of summation $k \to k+j$ yield
  $$
        A_1 \le N \sum_{j = 0}^{\infty} 2^{-j} F_j (1).
  $$
Finally, note that the term $A_2$ is estimated in \eqref{12.3.3}. The lemma is proved.
\end{proof}

\begin{proof}[Proof of Theorem \ref{theorem 12.1}]
In the first two steps below, we assume, additionally, that
\begin{equation}
			\label{eq12.1.15}
		(-\Delta_x)^{1/6} u  \in L_{p, r_1, \ldots, r_d, q} (\bR^{1+2d}_T, w).
\end{equation}
We will remove this assumption in Step 3.

\textit{Step 1: Estimate of a localized function.}
By Lemma \ref{lemma 9.7} and the self-improving property of the $A_p$-weights (see, for instance, Corollary 7.2.6 of \cite{G_14}),
there exists a number
\begin{equation*}
    p_0 = p_0 (d, p, r_1, \ldots, r_d, q,  K),\quad
    1 < p_0 < \min\{p, r_1, \ldots, r_d, q\},
\end{equation*}
such that
\begin{align}
                \label{12.1.4}
   & L_{p, r_1, \ldots, r_d, q} (\bR^{1+2d}_T, w) \subset  L_{p_0, \text{loc}} (\bR^{1+2d}_T),\\
                \label{12.1.3}
 & w_0\in A_{q/p_0} (\bR),\quad w_i\in A_{r_i/p_0} (\bR),\quad i=1,\ldots,d.
\end{align}

Let $\phi \in C^{\infty}_0 (\bR^{1+2d})$ be a function such that $\phi = 1$ on $\tQ_1$ and
denote
\begin{equation}
             \label{12.1.8}
        \phi_n  (z)  = \phi (t/n^2, x/n^3, v/n), \quad u_n  = u \phi_n, \quad \vec f_n  = \vec f \phi_n.
\end{equation}
Observe that $u_n$  satisfies
\begin{equation*}
    P_0 u_n + \lambda u_n = \div (\vec f_n)  +g_n,
\end{equation*}
where
\begin{equation}
                \label{eq12.1.20}
	g_n  = g \phi_n - \vec f \cdot D_v \phi_n  + u P_0 \phi_n - 2 (a D_v \phi_n) \cdot D_v u.
\end{equation}

Note that
$$
	\vec f_n, g_n \in L_{p_0} (\bR^{1+2d}_T),\quad u_n \in \bS_{p_0} (\bR^{1+2d}_T).
$$
We now use Proposition \ref{proposition 12.4} and conclude that
for any $z_0 \in \overline{\bR^{1+2d}_T}$,
\begin{align}
                \label{12.1.1}
   & ((-\Delta_x)^{1/6} u_n)^{\#}_T (z_0) \le N \nu^{-1}   \cM^{1/p_0}_{T} |(-\Delta_x)^{1/6} u_n|^{p_0} (z_0)\\
&     + N \nu^{(4d+2)/p_0}
    \sum_{k = 0}^{\infty} 2^{ - k }
    \big(\bM^{1/p_0}_{2^{k +1 }, T} |\vec f_n|^{p_0} (z_0)
    +  \lambda^{-1/2} \bM^{1/p_0}_{2^{k +1 }, T} | g_n|^{p_0} (z_0)\big) \notag,\\
            \label{12.1.2}
&\lambda^{1/2} (u_n)^{\#}_T (z_0)
+ (D_v u_n)^{\#}_T (z_0)\\
&\le N \nu^{-1} \lambda^{1/2} \cM^{1/p_0}_{ T}
     |u_n|^{p_0} (z_0)
+ N \nu^{-1} \cM^{1/p_0}_{ T}
     |D_v u_n|^{p_0} (z_0)\notag\\
&\quad + N \nu^{-1}   \sum_{k = 0}^{\infty} 2^{-2k} \bM^{1/p_0}_{2^k, T}  |(-\Delta_x)^{1/6} u_n|^{p_0} (z_0)\notag\\
&
\quad+ N \nu^{(4d+2)/p_0}  \sum_{k = 0}^{\infty}
2^{-  k  }
        \big(\bM^{1/p_0}_{2^{k +1 }, T} |\vec f_n|^{p_0} (z_0)
    + \lambda^{-1/2} \bM^{1/p_0}_{2^{k +1 }, T} | g_n|^{p_0} (z_0)\big)\notag,
\end{align}
where $\mathbb{M}_{c, T} f$ and $\mathcal{M}_T f$ are defined in \eqref{eq3.38}.
We take the $\|\cdot \|$-norm on both sides of \eqref{12.1.1}-\eqref{12.1.2}.
Then we  use Theorem  \ref{theorem A.4} with $p/p_0$, $q/p_0$, $r_i/p_0 > 1, i = 1, \ldots, d,$ combined with \eqref{12.1.3}.  By this and the  Minkowski inequality, we obtain
\begin{align}
            \label{12.1.5}
  &  \|(-\Delta_x)^{1/6} u_n\|
    \le N \nu^{-1} \|(-\Delta_x)^{1/6} u_n\|\\
    &+ N \nu^{(4d+2)/p_0} (\|\vec f_n\| + \lambda^{-1/2} \|g_n\|),\notag\\
			\label{12.1.9}
&
   \lambda^{1/2} \|u_n\| + \|D_v u_n\|
   \le N \nu^{-1} (\lambda^{1/2} \|u_n\|
   +  \|D_v u_n\|)\\
   & + N \nu^{-1} \|(-\Delta_x)^{1/6} u_n\|
    + N \nu^{(4d+2)/p_0}
    (\|\vec f_n\| + \lambda^{-1/2} \|g_n\|)\notag.
\end{align}
Taking $\nu \ge 2 + 4N$, we cancel the term  $\|(-\Delta_x)^{1/6} u_n\|$
on the right-hand side of \eqref{12.1.5} and  obtain
\begin{equation}
                    \label{eq7.44}
    \|(-\Delta_x)^{1/6} u_n\| \le N (\|\vec f_n\| +  \lambda^{-1/2} \|g_n\|).
\end{equation}
By  using the last inequality, \eqref{12.1.9}, and our choice of $\nu$, we prove
\begin{equation}
			\label{12.1.12}
    \lambda^{1/2} \|u_n\| +  \|D_v u_n\|
    \le N
    (\|\vec f_n\|  +   \lambda^{-1/2}\|g_n\|).
\end{equation}

\textit{Step 2: Limiting argument.}
By \eqref{12.1.12}, \eqref{eq12.1.20},  and the construction of $\phi_n$ (see \eqref{12.1.8}), we have
\begin{align*}
&\|\lambda^{1/2} |u| + |D_v u|\|_{ L_{p, r_1, \ldots, r_d, q} (\tQ_{n} \cap \bR^{1+2d}_T)  }\\
 &   \le N   \|\vec f\| +   N \lambda^{-1/2} \|g\| + N  n^{-1} \lambda^{-1/2} (\|\vec f\|
    + \|D_v u\| + \|u\|).
\end{align*}
Passing to the limit as $n \to \infty$, we prove the estimate \eqref{2.1.2} for $u$ and $D_v u$.

Next, note that due to \eqref{eq1.15}, and H\"older's inequality  for any $\eta \in C^{\infty}_0 (\bR^{1+2d}_T)$,
\begin{equation*}
	(-\Delta_x)^{1/6} \eta \in L_{*}: = L_{p^{*}, r_1^{*}, \ldots, r_d^{*}, q^{*}} (\bR^{1+2d}_T, w_{*}),
\end{equation*}
where $p^{*}, r_1^{*}, \ldots, r_d^{*}, q^{*}$ are H\"older's conjugates relative to $p, r_1, \ldots, r_d, q$ and
$$
	w_{*} (t, v) = w_0^{-1/(q-1)} (t) \prod_{i = 1}^d w_i^{ -1/(r_i-1) } (v_i).
$$	
Then, by this and the convergence $u_n \to u$   in $L_{p, r_1, \ldots, r_d, q} (\bR^{1+2d}_T, w)$, we have
$$
    	\bigg|\int_{  \bR^{1+2d}_T } ((-\Delta_x)^{1/6} u) \eta \, dz\bigg|   \le \| \eta \|_{ L_{*}  }  \nlimsup_{n \to \infty}  \| (-\Delta_x)^{1/6} u_n\|.
$$
The above inequality combined with \eqref{eq7.44}
gives \eqref{2.1.2} for $(-\Delta_x)^{1/6} u$.

\textit{Step 3: removing the assumption \eqref{eq12.1.15}.}
Let $u_{\varepsilon}$ be the convolution of $u$ in the $x$ variable with $\varepsilon^{-d} \zeta (\cdot/\varepsilon)$, where $\zeta$ is a smooth cutoff function with the unit integral.
We note that
$$
	P_0 u_{\varepsilon} + \lambda u_{\varepsilon} = \div \vec f_{\varepsilon} + g_{\varepsilon}.
$$
Furthermore,  by \eqref{eq1.15}, $(-\Delta_x)^{1/6} \zeta (\cdot/\varepsilon)$ satisfies the condition of Lemma \ref{lemma A.7}. Hence,  due to the identity
$$
	(-\Delta_x)^{1/6} u_{\varepsilon}  = \varepsilon^{-d} u \ast (-\Delta_x)^{1/6} \zeta (\cdot/\varepsilon)
$$
and Lemma \ref{lemma A.7}, the condition \eqref{eq12.1.15} holds with $u$ replaced with $u_{\varepsilon}$.
Then, by what was proved above and Lemma \ref{lemma A.7},
\begin{equation}
					\label{eq12.1.16}
\begin{aligned}				
&	\|\lambda^{1/2} |u_{\varepsilon}| + |D_v u_{\varepsilon}| + |(-\Delta_x)^{1/6} u_{\varepsilon}|\|\\
&	\le N \||\vec f_{\varepsilon}| + \lambda^{-1/2}  |g_{\varepsilon}|\|
 \le N \||\vec f| + \lambda^{-1/2}|g|\|.
\end{aligned}	
\end{equation}
By using a duality argument as in Step 2 and \eqref{eq12.1.16}, we conclude that
\eqref{eq12.1.15} and \eqref{2.1.2} holds for $\lambda > 0$.

\textit{Step 4: case $g \equiv 0$, $\lambda = 0$.}  For any $\lambda > 0$, we have
$$
	P_0 u + \lambda u = \div \vec f +  \lambda u.
$$
Then, by \eqref{2.1.2} with $\lambda > 0$, and $\lambda u$ in place of $g$,
$$
	\|\lambda^{1/2} |u| + |D_v u| + |(-\Delta_x)^{1/6} u|\|
	\le N \|f\|  + \lambda^{1/2} \|u\|.
$$
Taking the limit as $\lambda \downarrow 0$, we prove the desired bound.

To prove the assertion for the space $L_{p; r_1, \ldots, r_d} (\bR^{1+2d}_T, |x|^\alpha \prod_{i = 1}^d w_i (v_i))$,
we follow the above argument, only  modifying the proof of the estimate for  $(-\Delta_x)^{1/6} u$.
In particular, in Steps 2 - 3, due to \eqref{eq1.15},  for any $\alpha \in (-1, p-1)$ and any $\eta \in C^{\infty}_0 (\bR^{1+2d}_T)$,  one has
$$
	  (-\Delta_x)^{1/6} \eta \in L_{p^{*}; r_1^{*}, \ldots, r_d^{*}} (\bR^{1+2d}_T, |x|^{-\alpha/(p-1)} \prod_{i = 1}^d w_i^{-1/(r_i-1)} (v_i)) ,
$$
where the latter  is defined by \eqref{eq2.2.1} with
$$
p^{*},\, r_1^{*}, \ldots, r_d^{*},\, -\alpha/(p-1),\, w_1^{-1/(r_1-1)}, \ldots, w_d^{-1/(r_d-1)}
$$
in place of $p, r_1, \ldots, r_d, \alpha$, $w_1, \ldots, w_d$, respectively. The theorem is proved.
\end{proof}

\section{Proof of Theorem \ref{theorem 1.1} }
									\label{section 4}

\subsection{Proof of the assertion \texorpdfstring{$(i)$}{}}

In this section, we prove the main result for the KFP equation in the  space  $\bS_{p,  r_1, \ldots,  r_d, q} (\bR^{1+2d}_T,  w)$.
 The assertion $(iv)$ of Theorem \ref{theorem 1.1} is proved  along  the lines of this section (see Remark \ref{remark 4.1}).
We start by proving a mean oscillation estimate, which generalizes the one in Proposition \ref{proposition 12.4}.

\begin{lemma}
            \label{lemma 11.1}
Let $\lambda \ge  0$, $\gamma_0 > 0$, $\nu \ge 2$,  $p_1 \in (1, \infty)$, $\alpha \in (1,  3/2  )$  be numbers, $T \in (-\infty, \infty]$, and $R_0$ be the constant in
Assumption \ref{assumption 2.2} $(\gamma_0)$.
 Let $u \in \bS_{p_1 } (\bR^{1+2d}_T)$, $ \vec f, g \in L_{p_1} (\bR^{1+2d}_T)$ be functions such that
 \begin{equation}
            \label{eq11.6.0}
         \cP u  +\lambda u= \div \vec f+g.
  \end{equation}
 Then, under Assumptions \ref{assumption 2.1} -  \ref{assumption 2.2} $(\gamma_0)$,
there exists a sequence of positive numbers $\{c_{k  }, k   \ge  0 \}$ such  that
$$
	\sum_{k = 0 }^{\infty} c_k \leq N_0 (d, p_1, \alpha),
$$
and for  any $z_0 \in \overline{\bR^{1+2d}_T}$
and $r \in (0, R_0/(4\nu))$,
\begin{align*}
  &\lambda^{1/2} \bigg(| u -  (u)_{ Q_r (z_0) }|^{p_1}\bigg)^{1/p_1}_{ Q_r (z_0)} +
  \bigg(|D_v u -  (D_v u)_{ Q_r (z_0) }|^{p_1}\bigg)^{1/p_1}_{ Q_r (z_0)}\\
 &
 	 \leq 	N   \nu^{-1} \lambda^{1/2}
    (|u|^{p_1})^{1/p_1}_{ Q_{\nu r} (z_0)  } +
 	 N   \nu^{-1}
    (|D_v u|^{p_1})^{1/p_1}_{ Q_{\nu r} (z_0)  }\\
   &\quad + N \nu^{-1}
    \sum_{k = 0}^{\infty}
	 2^{-2k}
	 (|(-\Delta_x)^{1/6} u|^{p_1})^{1/p_1}_{ Q_{\nu r, 2^k \nu r} (z_0) }\\
	&\quad + N  \nu^{(4d+2)/p_1}
    \sum_{k=0}^\infty 2^{-k}
	 \big( (|\vec f|^{p_1})^{1/p_1}_{  Q_{ 2 \nu r, 2^{k+  1 } (2\nu r) } (z_0)  }+\lambda^{-1/2} (|g|^{p_1})^{1/p_1}_{  Q_{ 2 \nu r, 2^{k+   1 } (2\nu r) } (z_0)  }\big)\\
	  &\quad + N \nu^{(4d+2)/p_1} \gamma_0^{ (\alpha - 1)/(\alpha p_1) } \sum_{k=0}^\infty c_k (|D_v u|^{p_1 \alpha })^{1/(p_1 \alpha )}_{  Q_{ 2 \nu r, 2^{k+ 1 } (2\nu r) } (z_0) },
 \end{align*}
 \begin{align*}
    &     \bigg(|(-\Delta_x)^{1/6} u -  ((-\Delta_x)^{1/6} u)_{ Q_r (z_0) }|^{p_1}\bigg)^{1/p_1}_{ Q_r (z_0) } \\
&\leq N  \nu^{-1}  (|(-\Delta_x)^{1/6} u|^{p_1})^{1/p_1}_{ Q_{\nu r } (z_0)}\\
	&\quad + N  \nu^{(4d+2)/p_1}
    \sum_{k=0}^\infty 2^{-k}
	 \big( (|\vec f|^{p_1})^{1/p_1}_{  Q_{ 2 \nu r, 2^{k+  1 } (2\nu r) } (z_0)  }+\lambda^{-1/2} (|g|^{p_1})^{1/p_1}_{  Q_{  2 \nu r, 2^{k+  1 } (2\nu r) } (z_0)  }\big)\\
	  &\quad + N \nu^{(4d+2)/p_1} \gamma_0^{ (\alpha - 1)/(\alpha p_1) } \sum_{k=0}^\infty c_k (|D_v u|^{p_1 \alpha })^{1/(p_1 \alpha )}_{  Q_{2 \nu r, 2^{k+ 1 } (2\nu r) } (z_0) },
 \end{align*}
where $N = N (d, \delta, p_1, \alpha)$.
\end{lemma}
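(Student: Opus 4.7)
The plan is to reduce Lemma \ref{lemma 11.1} to Proposition \ref{proposition 12.4} by a freezing-of-coefficients argument and to absorb the variable-coefficient perturbation through the $VMO_{x,v}$ hypothesis.

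Fix $z_0$ and $r$, and define the frozen coefficient $\bar a^{ij}(t):=\fint_{D_{2\nu r}(z_0,t)}a^{ij}(t,x,v)\,dx\,dv$ for $t\in(t_0-(2\nu r)^2,t_0)$, extended by $\delta^{ij}$ outside this interval. Then $\bar a^{ij}$ is bounded and uniformly elliptic with the same constant $\delta$, so the operator $P_0:=\partial_t-v\cdot D_x-\bar a^{ij}(t)D_{v_iv_j}$ falls under Proposition \ref{proposition 12.4}. Using $\bar a^{ij}(t)D_{v_iv_j}u=\div(\bar a^{ij}(t)D_{v_j}u)$, equation \eqref{eq11.6.0} rewrites as
\[P_0u+\lambda u=\div\tilde{\vec f}+g,\qquad \tilde f^i:=f^i+(a^{ij}-\bar a^{ij})D_{v_j}u,\]
with $\tilde{\vec f}\in L_{p_1}(\bR^{1+2d}_T)$ because $|a-\bar a|\le 2\delta^{-1}$. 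Applied to this equation, Proposition \ref{proposition 12.4} yields the two desired mean-oscillation inequalities but with $\tilde{\vec f}$ in place of $\vec f$ on the right-hand side.

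Splitting $|\tilde f|\le|\vec f|+|(a-\bar a)D_v u|$ inside each average $(|\tilde{\vec f}|^{p_1})^{1/p_1}_{Q_{2\nu r,2^{k+1}(2\nu r)}(z_0)}$ supplies the $\vec f$-terms of the conclusion and leaves the commutator
\[E_k:=\bigl(|(a-\bar a)D_v u|^{p_1}\bigr)^{1/p_1}_{Q_{2\nu r,2^{k+1}(2\nu r)}(z_0)}\]
to be estimated. H\"older's inequality with conjugate exponents $\alpha$ and $\alpha'=\alpha/(\alpha-1)$ gives
\[E_k\le\bigl(|a-\bar a|^{p_1\alpha'}\bigr)^{1/(p_1\alpha')}_{Q_{2\nu r,2^{k+1}(2\nu r)}(z_0)}\bigl(|D_v u|^{p_1\alpha}\bigr)^{1/(p_1\alpha)}_{Q_{2\nu r,2^{k+1}(2\nu r)}(z_0)},\]
and the bound $|a-\bar a|\le 2\delta^{-1}$ absorbs all but one power into a constant, so that $(|a-\bar a|^{p_1\alpha'})_Q\le N(|a-\bar a|)_Q$; the exponent $1/(p_1\alpha')=(\alpha-1)/(\alpha p_1)$ is precisely the power of $\gamma_0$ appearing in the conclusion.

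Finally, I would estimate $(|a-\bar a|)_{Q_{2\nu r,2^{k+1}(2\nu r)}(z_0)}$ for each $k$ by covering the dilated cylinder in the $x$-direction by $\lesssim 2^{3dk}$ translates $Q_{2\nu r}(z_j)$, on each of which Assumption \ref{assumption 2.2}$(\gamma_0)$ applies (legitimate because $2\nu r<R_0$ by the hypothesis $r<R_0/(4\nu)$). Comparing the local averages $\bar a_{z_j}$ back to $\bar a$ via chaining through VMO at intermediate scales $\le R_0$, and using the trivial uniform bound $|\bar a_{z_j}-\bar a_{z_{j'}}|\le 2\delta^{-1}$ whenever the chain exceeds $R_0$, produces an estimate of the form $(|a-\bar a|)_{Q_{2\nu r,2^{k+1}(2\nu r)}(z_0)}\le N\min(1,2^{\beta k}\gamma_0)$ for some dimensional exponent $\beta$. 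Combining with the $2^{-k}$ weight from Proposition \ref{proposition 12.4} and using the elementary inequality $\min(1,X)\le X^\theta$ with $\theta\in(0,1]$ chosen so that $\theta\beta/(p_1\alpha')<1$ yields the summable sequence $c_k$, after which the $\gamma_0^{(\alpha-1)/(\alpha p_1)}$ factor is extracted. The main obstacle is this chaining-plus-covering bookkeeping: for $k$ large enough that $2^{k+1}(2\nu r)>R_0$ the dilated cylinder exceeds the VMO scale, so the $\gamma_0$-smallness is recovered only up to an exponential factor $2^{\beta k}$ which must be balanced against Proposition \ref{proposition 12.4}'s $2^{-k}$ decay, forcing $\alpha$ to be chosen close enough to $1$ within $(1,3/2)$ to accommodate the dimensional constant $\beta$.
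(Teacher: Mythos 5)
Your overall strategy matches the paper's proof exactly: freeze $\bar a(t):=\big(a(t,\cdot,\cdot)\big)_{B_{(2\nu r)^3}\times B_{2\nu r}}$ (after reducing to $z_0=0$ via the scaling in Lemma \ref{lemma 4.1}), rewrite the equation as $\bar P_0 u+\lambda u=\div\big(\vec f+(a-\bar a)D_vu\big)+g$, apply Proposition \ref{proposition 12.4}, and then use H\"older with exponents $\alpha,\alpha_1=\alpha/(\alpha-1)$ to isolate the oscillation of $a$. The one place where you diverge --- and introduce an unnecessary complication --- is the bound on $(|a-\bar a|)_{Q_{2\nu r,2^{k+1}(2\nu r)}(z_0)}$. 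The paper simply cites Lemma \ref{lemma 9.10} (Lemma 7.2 of \cite{DY_21}), which gives the clean bound $(|a-\bar a|)_{Q_{r,cr}}\le N(d)\,c^3\gamma_0$ whenever $r<R_0/2$; with $c=2^{k+1}$ this is $N\,2^{3k}\gamma_0$ and the paper sets $c_k=2^{-k+3k/(p_1\alpha_1)}$, summable precisely because $\alpha<3/2$ forces $\alpha_1>3$.

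The exponent in that bound is exactly $3$ and is \emph{not} dimensional, contrary to your phrasing. You are right that the dilated cylinder requires $\sim 2^{3dk}$ translates in $x$ to cover, but that count never enters: what matters is the chain \emph{length} from a translate back to the center, which is $\sim 2^{3k}$ (the $x$-radius of the dilated cylinder is $(2^{k+1}\cdot 2\nu r)^3\sim 2^{3k}(2\nu r)^3$, divided by the single-cylinder $x$-radius $(2\nu r)^3$). Each chaining step is at scale $2\nu r<R_0/2$, so Assumption \ref{assumption 2.2}$(\gamma_0)$ applies at every step regardless of $k$; the ``chain exceeds $R_0$'' concern never materializes and the $\min(1,\cdot)$ cap is never needed. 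This matters for the statement you are proving: if you take $\theta<1$ in $\min(1,X)\le X^{\theta}$ you obtain $\gamma_0^{\theta/(p_1\alpha_1)}$, a strictly larger quantity than the $\gamma_0^{(\alpha-1)/(\alpha p_1)}=\gamma_0^{1/(p_1\alpha_1)}$ asserted in the lemma. With the correct exponent $\beta=3$ you may take $\theta=1$ (since $3/(p_1\alpha_1)<1$ automatically when $\alpha_1>3$), which recovers the exact statement and also shows that no additional restriction of $\alpha$ toward $1$ is required --- any $\alpha\in(1,3/2)$ works uniformly in $d$ and $p_1$.
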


\begin{proof}
Clearly, we may assume that $D_v u \in L_{p_1 \alpha} ( Q_{ 2  \nu r, 2^{k+  1 } (2\nu r)} (z_0) )$ for any $k\ge 0$.
Thanks to Lemma \ref{lemma 4.1}, we may also assume that $z_0 = 0$.

We introduce
$$
\bar a (t)
 = (a (t, \cdot, \cdot))_{  B_{  (2 \nu r)^3   } \times B_{ 2\nu r} }
\quad\text{and}\quad
 \bar P_0 = \partial_t -  v \cdot D_x - \bar a^{i j} D_{v_i v_j}.
 $$
Observe that $u$ satisfies
$    \bar P_0 u  +\lambda u= \div (\vec f + (a -  \bar a) D_v u)+g$.
By this and Proposition \ref{proposition 12.4},
\begin{align*}
  &  \lambda^{1/2} \bigg(| u -  (u)_{ Q_r (z_0) }|^{p_1}\bigg)^{1/p_1}_{ Q_r } +
  \bigg(|D_v u -  (D_v u)_{ Q_r (z_0) }|^{p_1}\bigg)^{1/p_1}_{ Q_r }\\
 &
 	 \leq  N   \nu^{-1}
    (|D_v u|^{p_1})^{1/p_1}_{ Q_{\nu r}   }
    + N \nu^{-1}
    \sum_{k = 0}^{\infty}
	 2^{-2k}
	 (|(-\Delta_x)^{1/6} u|^{p_1})^{1/p_1}_{ Q_{\nu r, 2^k \nu r}  }\\
	&\quad + N  \nu^{(4d+2)/p_1}
    \sum_{k=0}^\infty 2^{-k}
	(  (|\vec f|^{p_1})^{1/p_1}_{  Q_{ 2 \nu r, 2^{k+  1 } (2\nu r) }   }+ \lambda^{-1/2} (|g|^{p_1})^{1/p_1}_{  Q_{2 \nu r, 2^{k+  1 } (2\nu r) }   }\big)\\
	  &\quad + N  \nu^{(4d+2)/p_1}
    \sum_{k=0}^\infty 2^{-k}
	  (|a - \bar a|^{p_1} |D_v u|^{p_1})^{1/p_1}_{  Q_{2 \nu r, 2^{k+ 1 } (2\nu r) }   }.
\end{align*}	
Using H\"older's inequality with $\alpha$  and $\alpha_1:= \alpha/(\alpha - 1)$ gives
\begin{align*}
& I: =(|a - \bar a|^{p_1}\, |D_v u|^{p_1})^{1/p_1}_{  Q_{2\nu r, 2^{k+1 } (2\nu r) }   }\\
&\le  (|a - \bar a|^{p_1 \alpha_1})^{1/(p_1 \alpha_1)}_{  Q_{2 \nu r, 2^{k+ 1} (2\nu r) }   }\, (|D_v u|^{p_1 \alpha})^{1/(p_1 \alpha)}_{  Q_{ 2 \nu r, 2^{k+  1 } (2\nu r) }   } =: I_1^{1/(p_1 \alpha_1)} I_2^{1/(p_1 \alpha)}.
\end{align*}
Due to the boundedness of the function $a$, we have
$$
    I_1 \le N (|a - \bar a|)_{  Q_{2 \nu r, 2^{k+ 1 } (2\nu r) }   }.
$$
Furthermore, since $ 2\nu r \le R_0/2$, by  Lemma \ref{lemma 9.10} with $c = 2^{k+1}$,
$$
    I_1 \le N 2^{3k} \gamma_0,
$$
and then,
$$
    2^{-k} I_1^{1/(p_1 \alpha_1)} \le N 2^{-k + 3k/(p_1 \alpha_1)} \gamma_0^{1/(p_1 \alpha_1)}.
$$
We set $c_k = 2^{-k + 3k/(p_1 \alpha_1)}, k \ge 0$
and note that
$
\sum_k c_k < \infty,
$ since $\alpha_1 > 3$.
The estimate for $(-\Delta_x)^{1/6} u$ is established in the same way.
The lemma is proved.
\end{proof}

In the next lemma, we prove the a priori estimate
\eqref{2.1.2} with $b \equiv \bar b \equiv 0$, $c \equiv 0$,  and  compactly supported
$u \in \bS_{p, r_1, \ldots, r_d, q} (\bR^{1+2d}_T, w),\,\vec f, g \in  L_{p, r_1, \ldots, r_d, q} (\bR^{1+2d}_T, w)$.

\begin{lemma}
            \label{lemma 11.6}
Let
\begin{itemize}[--]
\item $\lambda   > 0$, $p, r_1, \ldots, r_d, q > 1, K \ge 1$  be numbers, $T \in (-\infty, \infty]$,
\item $w_i, i = 0, 1, \ldots, d$, be weights on $\bR$ satisfying \eqref{eq1.0},
\item  Assumption \ref{assumption 2.1} be satisfied,
\item the functions  $u \in \bS_{p, r_1, \ldots, r_d, q} (\bR^{1+2d}_T, w)$,
$\vec f ,g \in L_{p, r_1, \ldots, r_d, q} (\bR^{1+2d}_T, w)$ have compact supports, and they satisfy \eqref{eq11.6.0}. 
\end{itemize}
Then, there exists a number $\gamma_0 = \gamma_0 (d, \delta,  p, r_1, \ldots, r_d, q, K) > 0$ such that, under Assumption \ref{assumption 2.2} $(\gamma_0)$, we have
\begin{equation}
			\label{eq11.6.1}
 \lambda^{1/2} \|u\| + \|D_v u\| + \|(-\Delta_x)^{1/6} u\|\le N \|\vec f\|+N \lambda^{-1/2} \|g\| +  N \lambda^{-1/2}  R_0^{-2} \|u\|,
\end{equation}
where $\|\,\cdot\,\| = \|\,\cdot\,\|_{ L_{p, r_1, \ldots, r_d, q} (\bR^{1+2d}_T, w)  }$, $N=N(d, \delta, p, r_1, \ldots, r_d, q, K)$, and
$R_0 \in (0, 1)$ is the constant in
Assumption \ref{assumption 2.2} $(\gamma_0)$.
\end{lemma}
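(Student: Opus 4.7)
The plan is to adapt Steps~1--2 of the proof of Theorem~\ref{theorem 12.1}, with Lemma~\ref{lemma 11.1} playing the role of Proposition~\ref{proposition 12.4}; the extra ingredients are the coefficient-oscillation term in Lemma~\ref{lemma 11.1} and the restriction of its validity to scales $r < R_0/(4\nu)$.

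First, using the self-improving property of $A_p$ weights, I would pick an auxiliary exponent $p_0 \in (1,\min\{p,r_1,\ldots,r_d,q\})$ together with $\alpha \in (1,3/2)$ satisfying $p_0\alpha < \min\{p,r_1,\ldots,r_d,q\}$ such that simultaneously $w_0 \in A_{q/(p_0\alpha)}(\bR)$ and $w_i \in A_{r_i/(p_0\alpha)}(\bR)$ for $i=1,\ldots,d$ (and hence also $w_0 \in A_{q/p_0}(\bR)$, $w_i \in A_{r_i/p_0}(\bR)$). This makes available the weighted mixed-norm boundedness of both the ordinary maximal function and the ``higher'' maximal function $(\bM|D_v u|^{p_0\alpha})^{1/(p_0\alpha)}$ that will be produced by the coefficient-oscillation term in Lemma~\ref{lemma 11.1}.

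Next, applying Lemma~\ref{lemma 11.1} with $p_1 = p_0$ would yield pointwise sharp-function-type bounds on the mean oscillations of $(-\Delta_x)^{1/6}u$, $\lambda^{1/2}u$, and $D_v u$ for all $r \in (0,R_0/(4\nu))$. For the remaining scales $r \ge R_0/(4\nu)$ I would estimate the mean oscillation trivially by $2\fint_{Q_r}|f|\,dz$; exploiting the compact support of $u$ together with the lower bound $r \ge R_0/(4\nu)$, this contribution would be dominated in the weighted mixed-norm by an expression of the form $N(\nu)\,R_0^{-2}\lambda^{-1/2}\|u\|$. Taking $\|\cdot\|$-norms on both sides of the resulting sharp-function bound and invoking Theorem~\ref{theorem A.4} together with the weighted $L_{p/p_0,r_1/p_0,\ldots,r_d/p_0,q/p_0}(w)$-boundedness of the Hardy--Littlewood maximal function would yield, writing $c=(4d+2)/p_0$,
\begin{align*}
\|(-\Delta_x)^{1/6} u\| &\le N\nu^{-1}\|(-\Delta_x)^{1/6} u\| + N\nu^{c}\gamma_0^{(\alpha-1)/(\alpha p_0)}\|D_v u\| \\
&\quad + N\nu^{c}\bigl(\|\vec f\| + \lambda^{-1/2}\|g\| + R_0^{-2}\lambda^{-1/2}\|u\|\bigr),
\end{align*}
and analogously $\lambda^{1/2}\|u\| + \|D_v u\|$ would be bounded by $N\nu^{-1}(\lambda^{1/2}\|u\| + \|D_v u\| + \|(-\Delta_x)^{1/6}u\|)$ plus the right-hand side of the previous display.

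Finally, I would choose $\nu$ large (depending only on $d,p,r_i,q,K$) to absorb all $\nu^{-1}$-terms to the left, and then fix $\gamma_0 = \gamma_0(d,\delta,p,r_1,\ldots,r_d,q,K)$ so small that $N\nu^{c}\gamma_0^{(\alpha-1)/(\alpha p_0)} \le 1/2$ to absorb the remaining $\|D_v u\|$-term; this yields \eqref{eq11.6.1}. I expect the main obstacle to be the first step: the coefficient-oscillation error in Lemma~\ref{lemma 11.1} requires the weighted boundedness of a higher maximal function at exponent $p_0\alpha>p_0$, so the $A_p$-slack in the weights must accommodate both $p_0$ and $p_0\alpha$ simultaneously. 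A secondary difficulty is the treatment of the large-$r$ regime, where the compact support hypothesis on $u$ is used in an essential way to produce the advertised $R_0^{-2}\lambda^{-1/2}\|u\|$ remainder.
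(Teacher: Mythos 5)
Your Step 1 (choosing exponents so that both the ordinary and ``higher'' maximal functions are weighted-bounded, then feeding Lemma~\ref{lemma 11.1} through Theorem~\ref{theorem A.4} and absorbing the $\gamma_0$- and $\nu^{-1}$-terms) matches the paper's outline, though the paper sets $p_1 = p_0/\alpha$ so that $p_1\alpha = p_0$ and no extra self-improvement step is needed beyond \eqref{12.1.3}. However, there is a genuine gap in how you propose to handle the large-$r$ regime, and it is where the structure of the proof actually lives. Estimating the mean oscillation over $Q_r(z_0)$ for $r \ge R_0/(4\nu)$ ``trivially'' gives $2\,\cM_T^{1/p_0}|h|^{p_0}(z_0)$ with a constant of size one; a general compact support of $u$ (which may be huge relative to $R_0$) together with the mere lower bound $r \ge R_0/(4\nu)$ produces neither a small constant nor a $\lambda^{-1/2}$ factor. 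After Fefferman--Stein this would yield $\|h\| \le N\|h\| + \ldots$ with $N \geq 1$, which cannot be absorbed. You cannot deduce a remainder of the form $N(\nu)R_0^{-2}\lambda^{-1/2}\|u\|$ from the large-$r$ estimate alone.

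What the paper does instead is a two-step argument. In Step~1 it imposes an \emph{extra} restriction: $u$ vanishes outside $(s-(R_0R_1)^2,s)\times\bR^{2d}$ for a small free parameter $R_1$. Then, for $4\nu r\ge R_0$, H\"older's inequality on the indicator of that time slab gives a factor $(R_0R_1 r^{-1})^{2/(p_1\alpha_1)} \le N\nu^{2/(p_1\alpha_1)}R_1^{2/(p_1\alpha_1)}$, which is small after choosing $R_1$ small jointly with $\nu$ large and $\gamma_0$ small, so it can be absorbed into the left-hand side. In Step~2 the small-time-support restriction is removed by a partition of unity in $t$: one takes a cutoff $\zeta\in C^\infty_0((-(R_0R_1)^2,0))$ with $\int\zeta^q=1$ and $|\zeta'|\lesssim (R_0R_1)^{-2-2/q}$, applies Step~1 to $u_s(z)=u(z)\zeta(t-s)$, and integrates over $s$ using the identity $\|U\|^q = \int\|U\zeta(\cdot-s)\|^q\,ds$. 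The commutator $(\partial_t-v\cdot D_x)(u\zeta) - \zeta(\partial_t-v\cdot D_x)u = u\zeta'$ contributes to the right-hand side as part of $g$, and that is precisely where the $N\lambda^{-1/2}R_0^{-2}\|u\|$ remainder in \eqref{eq11.6.1} comes from. Your proposal omits the small-time-support reduction and the partition of unity, so the mechanism that generates the advertised remainder is missing.
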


\begin{proof}
\textit{Step 1: estimate of a function with a small support in $t$.}
Let $R_1, \gamma_0 > 0$ be numbers which we will choose later. We assume, additionally, that $u$  vanishes outside
$(s - (R_0 R_1)^2, s) \times \bR^{2d}$ for some $s \in \bR$.
The small support in time restriction  will be removed in Step 2.

Let $p_0$ be the number satisfying \eqref{12.1.4} - \eqref{12.1.3}.
Then,
\ since $u, \vec f, g$ have compact supports, we have $u , D_v u, \vec f, g \in L_{p_0} (\bR^{1+2d}_T)$ and then, by \eqref{eq11.6.0} $\partial_t u - v \cdot D_x u \in \bH^{-1}_{p_0} (\bR^{1+2d}_T)$, so that $u \in \bS_{p_0} (\bR^{1+2d}_T)$. By Corollary \ref{corollary 2.2}, $(-\Delta_x)^{1/6} u \in L_{p_0} (\bR^{1+2d}_T)$.

We fix some $\nu \ge 2$, $\alpha \in (1, \min\{3/2,p_0\})$, and denote $p_1 = p_0/\alpha$, so that $p_0 = \alpha p_1$. If  $4 \nu r \ge R_0$, then by H\"older's inequality with $\alpha$ and $\alpha_1 = \alpha/(\alpha-1)$, for any function $h \in L_{\alpha p_1, \text{loc}} (\bR^{1+2d}_T)$ and $z \in \overline{\bR^{1+2d}_T}$,
\begin{align*}
		&(|h - (h)_{Q_r (z) }|^{p_1})^{1/p_1}_{Q_r (z) }
		\le 2 (|h|^{p_1})^{1/p_1}_{Q_r (z) }\\
&		\le 2  (I_{ (s - (R_0 R_1)^2, s)  })_{Q_r (z)}^{1/(p_1\alpha_1)} (|h|^{p_1 \alpha})^{1/(p_1 \alpha)}_{Q_r (z) }\\
&		\le  2  (R_0  R_1 r^{-1})^{2/(p_1\alpha_1)}  \cM^{1/(p_1 \alpha)}_T |h|^{p_1 \alpha}(z)\\
&		\le N \nu^{2/(p_0\alpha_1)} R_1^{2/(p_1\alpha_1)}   \cM^{1/(p_1 \alpha)}_T |h|^{p_1 \alpha} (z).
\end{align*}
In the case when $4 \nu r < R_0$, we use Lemma \ref{lemma 11.1}, which is applicable, since $\vec f , g \in L_{p_1} (\bR^{1+2d}_T)$, and $u \in \bS_{p_1} (\bR^{1+2d}_T)$. Combining these cases, we get in  $\overline{\bR^{1+2d}_T}$,
 \begin{align*}
& \lambda^{1/2} (u)^{\#}_T+  (D_v u)^{\#}_T\\
	& \le N \nu^{2/(p_1\alpha_1)} R_1^{2/(p_1\alpha_1)} (\lambda^{1/2}\cM^{1/(p_1 \alpha)}_T | u|^{p_1 \alpha}+\cM^{1/(p_1 \alpha)}_T |D_v  u|^{p_1 \alpha}) \\
	&\quad + N \nu^{-1}(\lambda^{1/2} \cM^{1/p_1}_{T} |u|^{p_1}+  \cM^{1/p_1}_{T} |D_v u|^{p_1})  \\
	&\quad + N  \nu^{-1} \sum_{k = 0}^{\infty}
	  2^{-2k} \bM^{1/p_1}_{2^k, T} |(-\Delta_x)^{1/6} u|^{p_1}\\
	&\quad + N \nu^{(4d+2)/p_1}   \gamma_0^{ 1/(p_1 \alpha_1)}  \sum_{k = 0}^{\infty} c_{k  }
	\bM^{1/(p_1 \alpha)}_{2^{k +1}  , T}  |D_v u|^{p_1 \alpha}\\
	&\quad + N  \nu^{(4d+2)/p_1}    \sum_{k = 0}^{\infty} 2^{-k  }    %
   ( \bM^{1/p_1}_{2^{k +1  } , T} |\vec f|^{p_1} + \lambda^{-1/2}\bM^{1/p_1}_{2^{k +1  } , T} |g|^{p_1}),
   \end{align*}
and
 \begin{align*}
&   ((-\Delta_x)^{1/6} u)^{\#}_T\\
	& \le N \nu^{2/(p_1\alpha_1)} R_1^{2/(p_1\alpha_1)} \cM^{1/(p_1 \alpha)}_T |(-\Delta_x)^{1/6} u|^{p_1 \alpha} \\
		& \quad + N \nu^{-1}  \cM^{1/p_1}_{T} |(-\Delta_x)^{1/6} u|^{p_1}  \\
	&\quad + N \nu^{(4d+2)/p_1}   \gamma_0^{ 1/(p_1 \alpha_1)}  \sum_{k = 0}^{\infty} c_{k  } \bM^{1/(p_1 \alpha)}_{2^{k +1}  , T}  |D_v u|^{p_1 \alpha}\\
	&\quad + N  \nu^{(4d+2)/p_1}    \sum_{k = 0}^{\infty} 2^{-k  }    %
   ( \bM^{1/p_1}_{2^{k +1  } , T} |\vec f|^{p_1}+ \lambda^{-1/2}\bM^{1/p_1}_{2^{k +1  } , T} |g|^{p_1}),
   \end{align*}
where $(f)^{\#}_T$, $\bM_{c, T} f$, and $\cM_T f$ are defined in \eqref{eq3.38}.
We take the $\|\cdot \|$-norm of both sides of the above inequalities and use the  Minkowski inequality. Then, by  \eqref{12.1.3} with $p_0 = p_1 \alpha$ and   Theorem \ref{theorem A.4}
with
$$
    p/(p_1 \alpha), r_1/(p_1 \alpha), \ldots r_d/(p_1 \alpha), q/(p_1 \alpha) > 1,
$$
we obtain
\begin{align}
            \label{11.2.1}
&\lambda^{1/2}\|u\| +  \|D_v u\| \le N (\nu^{-1} +  \nu^{2/(p_1\alpha_1)} R_1^{2/(p_1\alpha_1)}) (\lambda^{1/2}\|u\| +  \|D_v u\|)\\
  &\quad
   + N \nu^{-1} \|(-\Delta_x)^{1/6} u\|
    +N\nu^{(4d+2)/p_1}   \gamma_0^{ 1/(p_1 \alpha_1)}\|D_v u\| \notag\\
    &
\quad + N \nu^{(4d+2)/p_1}( \|\vec f\| + \lambda^{-1/2}\|g\|)\notag,\\
& \|(-\Delta_x)^{1/6} u\| \le N  (\nu^{-1} +  \nu^{2/(p_1\alpha_1)} R_1^{2/(p_1\alpha_1)}) \|(-\Delta_x)^{1/6} u\|  \label{eq2.38b}\\
&\quad  + N \nu^{(4d+2)/p_1}   \gamma_0^{ 1/(p_1 \alpha_1)} \|D_v u\| + N  \nu^{(4d+2)/p_1} (\|\vec f\|+ \lambda^{-1/2}\|g\|).\notag
\end{align}
Taking $\nu \ge   2 + 4N$ first, and then choosing $R_1,\gamma_0 > 0$ sufficiently small such that
$$
 	N\nu^{(4d+2)/p_1}   \gamma_0^{ 1/(p_1 \alpha_1)} + N \nu^{2/(p_1\alpha_1)} R_1^{2/(p_1\alpha_1)} < 1/4,
$$
and using the fact that $(-\Delta_x)^{1/6} u \in L_{p, r_1, \ldots, r_d, q} (\bR^{1+2d}_T, w)$ (see Corollary \ref{corollary 3.2}), we obtain from \eqref{eq2.38b} that
\begin{equation}
                        		\label{11.2.11}
	\|(-\Delta_x)^{1/6} u\| \le  (1/2)\|D_v u\| + N  \nu^{(4d+2)/p_1} (\|\vec f\|+ \lambda^{-1/2}\|g\|).
\end{equation}
By this, \eqref{11.2.1}, and our choice of $\nu, R_1$, and $\gamma_0$, we get
\begin{equation*}
\lambda^{1/2}\|u\| + 	\|D_v u\|
 \le  (5/8) (\lambda^{1/2}\|u\| +  \|D_v u\|)
 + N   \nu^{(4d+2)/p_1} (\|\vec f\|+ \lambda^{-1/2}\|g\|),
\end{equation*}
which implies
	\begin{equation}
				\label{11.2.6}
	\lambda^{1/2}\|u\| + 	\|D_v u\|  \le  N ( \|\vec f\|+ \lambda^{-1/2}\|g\|).
	\end{equation}
This combined with \eqref{11.2.11} gives
	\begin{equation}
				\label{11.2.7}
	 \|(-\Delta_x)^{1/6} u\|  \le  N  (\|\vec f\|+ \lambda^{-1/2}\|g\|).
		\end{equation}

\textit{Step 2: partition of unity.}
Let $\zeta \in C^{\infty}_0 ((-(R_0 R_1)^2, 0))$ be a nonnegative function such that
\begin{equation}
				\label{11.2.2}
	\int \zeta^q (t) \, dt = 1, \quad |\zeta'| \le N_0 (R_0 R_1)^{-2-2/q}.
\end{equation}
Observe that for any $t \in \bR_T$
and $U \in L_{p, r_1, \ldots, r_d, q} (\bR^{1+2d}_T,w)$, by \eqref{11.2.2},
\begin{align*}
&	\|U (t, \cdot)\|^q_{  L_{p, r_1, \ldots, r_d} (\bR^{2d}, \Pi_{i=1}^d w_i) }\\
&=  \int_{\bR} \|U (t, \cdot)\|^q_{  L_{p, r_1, \ldots, r_d} (\bR^{2d}, \Pi_{i=1}^d w_i) } \zeta^q (t-s) \, ds.
\end{align*}
Multiplying the above identity by $w_0$ and integrating over $(-\infty, T]$, we get 
\begin{equation}
                \label{11.2.3}
    		\|U\|^q   = \int_{\bR} \| U   \zeta (\cdot-s)\|^q  \, ds.
\end{equation}

Next, note that for any $s \in \bR$, the function
$u_s (z) := u (z)  \zeta (t-s)$ vanishes outside $(s - (R_0 R_1)^2, s)$  and  satisfies the equation
$$
	\cP u_s (z) + \lambda u_s (z) =  \div (\vec f  (z) \zeta (t-s))
	+ g (z)  \zeta (t-s)+ u \zeta'(t -s).
$$
By  \eqref{11.2.6} and \eqref{11.2.7} proved in Step 1,
\begin{align*}
	&\lambda^{1/2} \|u_s\| + \|D_v u_s\| + \|(-\Delta_x)^{1/6} u_s\|\\
	& \le N \|\vec f \zeta (\cdot-s)\| + N \lambda^{-1/2} \|g \zeta (\cdot-s)\| + N (R_0 R_1)^{-2-2/q} \lambda^{-1/2} \|u \xi (\cdot-s)\|,
\end{align*}
where $\xi \in C^{\infty}_0 (\bR)$ is a nonnegative function such that $\xi = 1$ on the support of $\zeta$, and  $\int \xi^q (t) \, dt = N_1 (R_0 R_1)^{2}$. Raising the above inequality to the power $q$, integrating over $s \in \bR$ and using \eqref{11.2.3}, and our choice of $R_1$, we prove \eqref{eq11.6.1}.
\end{proof}

\begin{remark}
			\label{remark 4.1}
A version of Lemma  \ref{lemma 11.6} holds in the case when
\begin{align*}
	&u \in \bS_{p; r_1, \ldots, r_d} (\bR^{1+2d}_T,  |x|^{\alpha} \prod_{i = 1}^d w_i (v_i)), \\
	&\vec f , g \in L_{p; r_1, \ldots, r_d} (\bR^{1+2d}_T,  |x|^{\alpha} \prod_{i = 1}^d w_i (v_i)),
\end{align*}
and these functions have  compact supports.
To prove the result, one needs to follow the proof of Lemma \ref{lemma 11.6}, but use the partition of unity in $v_d$, instead of $t$.
We give a few details below.
First, repeating the argument of Step 1, we prove the estimate \eqref{eq11.6.1} for $u$ satisfying Eq. \eqref{eq11.6.0} and vanishing outside $\bR_t \times \bR^d_x \times \bR^{d-1}_{v_1, \ldots, v_d} \times (s- R_0 R_1, s)$ for some $s \in \bR$.
Furthermore, we  fix a nonnegative function $\zeta \in C^{\infty}_0 ((-R_0 R_1, 0))$ such that
$$
	\int \zeta^{r_d} \, dv_d = 1,\quad |\zeta'| < N (R_0 R_1)^{-1-1/r_d}
$$
and denote $u_s (z) = u (z) \zeta (v_d - s)$.
The function $u_s$ satisfies the identity
\begin{align*}
	\mathcal{P} u_s (z) + \lambda u_s (z)  &= D_{v_i}  [ f^i (z) \zeta (v_d - s) - a^{i d} (z) \zeta' (v_d - s) u (z)] \\
	&\quad - f_d (z) \zeta' (v_d - s)
	+ g \zeta (v_d - s)
	- a^{ d j} (z) \zeta' (v_d - s) D_{v_{j}} u (z).
\end{align*}
As in the proof of Lemma \ref{lemma 11.6}, we obtain
\begin{align*}
	  &  \lambda^{1/2} \|u\| + \|D_v u\| + \|(-\Delta_x)^{1/6} u\| \\
	& \le N \||\vec f| +  R_0^{-1} |u| +\lambda^{-1/2}|g|\| +  N \lambda^{-1/2}  R_0^{-1}\||D_v u| +|f_d|\|.
\end{align*}
Taking $\lambda$ sufficiently large, we may erase the  terms involving $u$ from the right-hand side of the above inequality.
\end{remark}

\begin{proof}[Proof of  Theorem \ref{theorem 1.1} $(i)$]
First, we consider the case when $b \equiv \overline{ b} \equiv 0$ and $c \equiv 0$.  We will focus on the case when the weight is independent of the $x$ variable, since in the remaining case, the proof is the same. Let $\phi_n, u_n$ be the functions defined by \eqref{12.1.8}. Note that
\begin{equation*}
  \cP u_n + \lambda u_n = \div \mathsf{f}_n + \mathsf{g_n},
\end{equation*}
where
\begin{align*}
&	\mathsf{f}_n =  \vec f \phi_n - (a D_v \phi_n) u, \\
&
	\mathsf{g}_n = g \phi_n  - \vec f \cdot D_v \phi_n+ u (\partial_t \phi_n   - v \cdot D_x \phi_n)  -  (a D_v \phi_{ n } ) \cdot D_v u,
\end{align*}
and, furthermore, the functions $\mathsf{f_n}, \mathsf{g_n}, u_n$ are compactly supported, and
\begin{equation*}
	\mathsf{f}_n, \mathsf{g}_n, u_n  \in  L_{p, r_1, \ldots, r_d, q} (\bR^{1+2d}_T, w).
\end{equation*}

Then, by Lemma \ref{lemma 11.6},
there exist
$
	\gamma_0 = \gamma_0 (d, \delta, p, r_1, \ldots, r_d, q, K) > 0
$
 such that if Assumption \ref{assumption 2.2} $(\gamma_0)$ holds, then for any $\lambda > 0$,
\begin{align*}
&	 \|\lambda^{1/2}|u_n| + |D_v u_n| + |(-\Delta_x)^{1/6} u_n|\|\\
&	 \le N \|\mathsf{f}_n\| + N \lambda^{-1/2}  \||\mathsf{g}_n| + R_0^{-2}  |u_n|\|,\\
&
\le   \|\vec f\| + N \lambda^{-1/2} (\|g\| + R_0^{-2} \|u\|)
 + N n^{-1} (1 +\lambda^{-1/2})   \|u\| \\
 &\quad + N  n^{-1} \lambda^{-1/2} \||D_v u| + |\vec f|\|,
\end{align*}
where $N = N (d, \delta, p, r_1, \ldots, r_d, q, K)$, and $R_0 \in (0, 1)$ is the number in Assumption \ref{assumption 2.2} $(\gamma_0)$.
Using a limiting argument as in Step 2 of the proof of Theorem \ref{theorem 12.1}, we conclude
$$
	 \|\lambda^{1/2}|u| + |D_v u| + |(-\Delta_x)^{1/6} u|\|
\le  N  \|f\| + N \lambda^{-1/2} \|g\| +  N \lambda^{-1/2} R_0^{-2} \|u\|.
$$
Taking $\lambda > 2  N R_0^{-2}$ so that
$\lambda^{1/2} - N \lambda^{-1/2} R_0^{-2} > \lambda^{1/2}/2$, we prove \eqref{2.1.2}.

In the general case, we rewrite Eq. \eqref{1.1} as
$$
    \cP u + \lambda u = G + \div  F, \quad
    G = g  - b \cdot D_v u  - c u, \quad F = \vec  f  - \overline{ b} u.
$$
Then, by  \eqref{2.1.2} with $F$ and $G$ in place of $ \vec f$ and $g$, we get
\begin{align*}
     \lambda^{1/2} \|u\| &+ \|D_v u\|
    +  \|(-\Delta_x)^{1/6} u\| \\
    &
    	\le N \lambda^{-1/2} \|g\|  + N L \lambda^{-1/2} (\|D_v u\| + \|u\|)
    	+ N  \|\vec f\| + N L  \|u\|.
\end{align*}
Taking $\lambda \ge  1+4 (N L)^2$, we may drop the terms involving $u$ on the right-hand side of the above inequality. The assertion $(i)$ is proved.
\end{proof}

\subsection{Proof of  Theorem \ref{theorem 1.1}  \texorpdfstring{$(ii)$}{} and \texorpdfstring{$(iii)$}{}.}
As we pointed out in Remark \ref{remark 2.2}, the assertion $(iii)$ follows directly from $(ii)$.
To prove the latter, we first establish the unique solvability result in  $L_p (\bR^{1+2d})$ spaces.

\begin{proposition}
                \label{proposition 11.6}
        Theorem \ref{theorem 1.1} $(ii)$ is satisfied in the case when $p = r_1 = \cdots = r_d = q$,  $w \equiv 1$, and $T = \infty$.
\end{proposition}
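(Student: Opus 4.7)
My plan is to prove Proposition \ref{proposition 11.6} by the method of continuity, using part $(i)$ of Theorem \ref{theorem 1.1} (already established in this section) for the uniform a priori estimate and Theorem \ref{theorem 11.3} for the base case. Uniqueness will be immediate: if $u \in \bS_p (\bR^{1+2d})$ solves the equation with $\vec f \equiv 0$ and $g \equiv 0$, then \eqref{2.1.2} forces $u \equiv 0$.

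For existence, I will interpolate to the model operator. For $\tau \in [0,1]$, set
$$
a^{ij}_\tau = \tau a^{ij} + (1-\tau)\delta^{ij}, \qquad b_\tau = \tau b, \quad \bar b_\tau = \tau \bar b, \quad c_\tau = \tau c,
$$
and define $L_\tau: \bS_p(\bR^{1+2d}) \to \bH^{-1}_p(\bR^{1+2d})$ by
$$
L_\tau u = \partial_t u - v\cdot D_x u - D_{v_i}(a^{ij}_\tau D_{v_j} u) + \div(\bar b_\tau u) + b_\tau \cdot D_v u + (c_\tau + \lambda) u.
$$
Since $u \in \bS_p$ gives $(\partial_t - v\cdot D_x)u \in \bH^{-1}_p$ and the remaining terms are in $L_p \hookrightarrow \bH^{-1}_p$ (via either the divergence or the identity map), each $L_\tau$ is a bounded linear operator, and $\tau \mapsto L_\tau$ is continuous in operator norm. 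At $\tau = 0$, $L_0 = P_0 + \lambda$ with $a^{ij}_0 = \delta^{ij}$ is surjective by Theorem \ref{theorem 11.3} $(i)$.

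For the required uniform a priori estimate, note that $a^{ij}_\tau$ satisfies Assumption \ref{assumption 2.1} with the same $\delta$ and Assumption \ref{assumption 2.2}$(\gamma_0)$ with the same $\gamma_0$, since
$$
\mathrm{osc}_{x,v}(a_\tau, Q_r(z_0)) = \tau\, \mathrm{osc}_{x,v}(a, Q_r(z_0)) \le \gamma_0,
$$
and $|b_\tau| + |\bar b_\tau| + |c_\tau| \le L$. Thus part $(i)$ applies to every $L_\tau$ with a single threshold $\lambda_0 = \lambda_0(d,\delta,p,L,R_0)$ and a single constant $N = N(d,\delta,p)$: for any $\vec f, g \in L_p$ with $L_\tau u = \div \vec f + g$,
$$
\lambda^{1/2}\|u\|_{L_p} + \|D_v u\|_{L_p} + \|(-\Delta_x)^{1/6}u\|_{L_p} \le N\|\vec f\|_{L_p} + N\lambda^{-1/2}\|g\|_{L_p}.
$$
Solving the equation for $\partial_t u - v\cdot D_x u$ and using $\|a_\tau D_v u - \bar b_\tau u\|_{L_p} + \|b_\tau \cdot D_v u + (c_\tau+\lambda)u\|_{L_p} \le N(\|D_v u\|_{L_p} + \|u\|_{L_p})$ gives $\|(\partial_t - v\cdot D_x)u\|_{\bH^{-1}_p} \le N(\|\vec f\|_{L_p} + \|g\|_{L_p})$, and then taking the infimum over representations $L_\tau u = \div \vec f + g$ yields
$$
\|u\|_{\bS_p(\bR^{1+2d})} \le C\|L_\tau u\|_{\bH^{-1}_p(\bR^{1+2d})}, \qquad \tau \in [0,1],
$$
with $C$ independent of $\tau$. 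The method of continuity then promotes surjectivity of $L_0$ to surjectivity of $L_1$, completing the proof.

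There is no substantive obstacle here; the argument is a routine continuation. The only point that demands care is the verification that the constant $\gamma_0$ furnished by part $(i)$ serves uniformly for every $a_\tau$ along the path, and this is guaranteed by the linearity of $\mathrm{osc}_{x,v}$ in the coefficients together with $\tau \in [0,1]$.
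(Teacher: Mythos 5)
Your proposal is correct and takes exactly the same route as the paper, which proves this proposition in a single sentence invoking the method of continuity together with Theorem \ref{theorem 1.1} $(i)$ and Theorem \ref{theorem 11.3} $(i)$. Your write-up spells out the path $a^{ij}_\tau = \tau a^{ij} + (1-\tau)\delta^{ij}$ and verifies the uniformity of $\delta$, $\gamma_0$, and the constants along the homotopy, which is precisely the content the paper leaves implicit.
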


\begin{proof}
The assertion follows from the method of continuity, Theorem \ref{theorem 1.1} $(i)$, and Theorem \ref{theorem 11.3} $(i)$.
\end{proof}

The following is  a decay estimate for the solution to Eq. \eqref{1.1} with the compactly supported right-hand side, which is analogous to Lemma \ref{lemma 12.1}. This result is needed for establishing the existence part in Theorem \ref{theorem 1.1} $(ii)$.

\begin{lemma}
            \label{proposition 11.7}
Invoke the assumptions of Proposition \ref{proposition 11.6} and let $\lambda_0 =  \lambda_0 (d, \delta, p, L) > 1$  be the constant from the statement of this result.  Assume, additionally, that  $\vec f$ and $g$  vanish outside $\tQ_R$ for some  $ R \ge 1$ and let $u \in \bS_p (\bR^{1+2d})$ be the  solution to Eq. \eqref{1.1}, which exists and is unique due to the aforementioned proposition.
Then, for any $\lambda \ge \lambda_0$  and $j \in \{0, 1, 2, \ldots\}$,
\begin{align*}
&	\lambda^{1/2}  \|u\|_{ L_{p} (\tQ_{2^{j+1} R} \setminus  \tQ_{2^{j} R})  }
	+	 \|D_v u\|_{ L_{p} (
\tQ_{2^{j+1} R} \setminus  \tQ_{2^{j} R})  }\\
&	\le N 2^{-j (j-1)/4} R^{-j} \big(\| \vec f\|_{ L_{p} (\bR^{1+2d}) } + \lambda^{-1/2}\|g\|_{ L_{p} (\bR^{1+2d}) }),
\end{align*}
where $N = N (d, \delta, p, L)$.
\end{lemma}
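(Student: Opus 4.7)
The plan is to follow the iterated-cutoff scheme used in the proof of Lemma \ref{lemma 12.1}, now substituting the global a priori estimate of Proposition \ref{proposition 11.6} for Theorem \ref{theorem 11.3} in order to accommodate the variable coefficients $a^{ij}$ and the lower-order terms of Eq. \eqref{1.1}. First I would fix a sequence of smooth cutoffs $\eta_i$, $i\ge 0$, with $\eta_i \equiv 0$ on $\tQ_{2^{i+1/2}R}$, $\eta_i \equiv 1$ on $\bR^{1+2d}\setminus \tQ_{2^{i+1}R}$, and the anisotropic bounds
\begin{equation*}
|D_v\eta_i|\le N(2^iR)^{-1},\ \ |D_v^2\eta_i|\le N(2^iR)^{-2},\ \ |D_x\eta_i|\le N(2^iR)^{-3},\ \ |\partial_t\eta_i|\le N(2^iR)^{-2};
\end{equation*}
on the support of $D\eta_i$ one has $|v|\le 2^{i+1}R$, so $|v\cdot D_x\eta_i|\le N(2^iR)^{-2}$ as well.

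The key observation is that, since the supports of $\vec f$ and $g$ lie in $\tQ_R \subset \tQ_{2^{i+1/2}R}$ for every $i\ge 0$, we have $\eta_i\vec f \equiv 0$, $\eta_i g\equiv 0$, and $\vec f\cdot D_v\eta_i\equiv 0$. A direct product-rule computation then gives
\begin{equation*}
(\cP + \div\bar b + b\cdot D_v + c + \lambda)(u\eta_i) = \div \vec F_i + G_i,
\end{equation*}
where $\vec F_i := -a^{ij}u\, D_{v_j}\eta_i\, e_i$ is a vector field and $G_i$ collects the non-divergence commutator terms $u(\partial_t - v\cdot D_x)\eta_i$, $-a^{ij}D_{v_j}u\, D_{v_i}\eta_i$, $\bar b\cdot uD_v\eta_i$, and $u b\cdot D_v\eta_i$; both are supported in the anisotropic annulus $A_i := \tQ_{2^{i+1}R}\setminus \tQ_{2^{i+1/2}R}$. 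The derivative bounds on $\eta_i$ and the boundedness of $a,b,\bar b$ yield
\begin{equation*}
\|\vec F_i\|_{L_p}\le N(2^iR)^{-1}\|u\|_{L_p(A_i)},\quad \|G_i\|_{L_p}\le N(2^iR)^{-1}\|D_v u\|_{L_p(A_i)} + N(2^iR)^{-2}\|u\|_{L_p(A_i)}.
\end{equation*}

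Applying Proposition \ref{proposition 11.6} to $u\eta_i$, using $\eta_i\equiv 1$ on $\tQ_{2^{i+1}R}^c$ and $A_i\subset \tQ_{2^iR}^c$, and invoking $\lambda\ge\lambda_0\ge 1\ge R^{-2}$ to absorb the $(2^iR)^{-2}$ term, I obtain the recursion
\begin{equation*}
E_{i+1}\le N(2^iR)^{-1}\lambda^{-1/2}E_i,\qquad E_i := \lambda^{1/2}\|u\|_{L_p(\tQ_{2^iR}^c)} + \|D_v u\|_{L_p(\tQ_{2^iR}^c)}.
\end{equation*}
Iterating $j$ times starting from the base case $E_0\le N(\|\vec f\|_{L_p(\bR^{1+2d})} + \lambda^{-1/2}\|g\|_{L_p(\bR^{1+2d})})$ supplied by Proposition \ref{proposition 11.6} produces the prefactor $N^j\lambda^{-j/2}2^{-j(j-1)/2}R^{-j}$, which for $\lambda\ge \lambda_0$ is bounded by $N\, 2^{-j(j-1)/4}R^{-j}$ via the elementary comparison $M^j\le N\, 2^{j(j-1)/4}$ with $M := N\lambda_0^{-1/2}$, exactly as in the final step of the proof of Lemma \ref{lemma 12.1}. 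Since $\tQ_{2^{j+1}R}\setminus \tQ_{2^jR}\subset \tQ_{2^jR}^c$, this yields the claimed decay estimate.

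The main technical point will be the bookkeeping in the product-rule expansion of $\cP(u\eta_i)$: the commutator $-D_{v_i}(a^{ij}u\, D_{v_j}\eta_i)$ must be placed into the divergence ``$\vec F_i$'' slot of Proposition \ref{proposition 11.6} rather than forcing a second derivative on $u$, and the contributions from $\bar b, b, c$ must be shown to produce only commutators of order $(2^iR)^{-1}$ absorbed by the $\lambda$-large hypothesis. Everything else --- the recursion and the absorption of the geometric factor $N^j\lambda^{-j/2}$ into $2^{-j(j-1)/4}$ --- is routine.
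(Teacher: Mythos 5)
Your proposal is correct and takes essentially the same approach as the paper: the same iterated-cutoff scheme with anisotropic cutoffs $\eta_i$, the same product-rule decomposition that places the commutator $-D_{v_i}(a^{ij}uD_{v_j}\eta_i)$ in the divergence slot, the same recursion, and the same absorption of $N^j$ into $2^{-j(j-1)/4}$ at the end. The only cosmetic slip is that the a priori estimate you invoke is that of Theorem~\ref{theorem 1.1}~$(i)$ (which the paper cites directly), not Proposition~\ref{proposition 11.6}, which is a solvability statement rather than an estimate.
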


\begin{proof}
The proof is similar to that of Lemma 7.4 in \cite{DY_21}.
First, by Theorem \ref{theorem 1.1} $(i)$, we have
\begin{equation}
                \label{11.7.1}
	\lambda^{1/2} \|u\|_{  L_p (\bR^{1+2d})
	}
    + \|D_v u\|_{  L_p (\bR^{1+2d})}
        \le N \|\vec f\|_{ L_p (\bR^{1+2d}) }  + N \lambda^{-1/2} \|g\|_{ L_p (\bR^{1+2d}) }.
\end{equation}
Let $\eta_j ,j \in \{0,1,2,\ldots\},$ be a sequence of smooth functions such that $\eta_j = 0$ in $\tQ_{2^{j} R}$, $\eta_j  = 1$ outside $\tQ_{2^{j+1} R}$,
\begin{equation}
\begin{aligned}
                \label{eq4.3.2}
&|\eta_j|\le 1,\quad |D_v \eta_j|\le N2^{-j}R^{-1}, \quad |D^2_v \eta_j|\le N2^{-2j}R^{-2},\\
	& |D_x \eta_j|\le N2^{-3j}R^{-3}, \quad |\partial_t \eta_j| \le N2^{-2j}R^{-2}.
\end{aligned}	
\end{equation}
Note that  $u_j = u \eta_j$ satisfies the  equation
\begin{align*}
      &  \cP u_{j} +  \div (\overline{ b} u_j) + b^i D_{v_i} u_j + c u_j +   \lambda u_j
    =   \div [-u \,  (a  D_v \eta_j)]  \\
    & -  (a D_v \eta_j)\cdot D_v u + u (\partial_t \eta_j - v \cdot D_x \eta_j + b \cdot D_{v} \eta_j + \overline{b} \cdot D_v \eta_j)
 \end{align*}
because $\vec f $ and $g$ vanish outside $\tQ_R$.
Then, by the a priori estimate in Theorem \ref{theorem 1.1} $(i)$, \eqref{eq4.3.2},
and the fact that $\lambda > 1$, we get
\begin{align*}
&	\|\lambda^{1/2} |u|+|D_v u|\|_{  L_{p}  (\tQ_{2^{j+2} R} \setminus  \tQ_{2^{j+1} R}) }\\
&	\le   N  \|u \,  (a  D_v \eta_j)\|_{  L_{p}  (\bR^{1+2d})   }
    + N \lambda^{-1/2}  \|(a D_v \eta_j)\cdot D_v u\|_{  L_{p}  (\bR^{1+2d})   } \\
&\quad + N \lambda^{-1/2}  \| u (\partial_t \eta_j - v \cdot D_x \eta_j + (b +\overline{b}) \cdot D_{v} \eta_j)  \|_{  L_{p}  (\bR^{1+2d})   } \\
& 	\le  N 2^{-j} R^{-1} \|\lambda^{1/2} |u| + |D_v u|\|_{  L_{p} (\tQ_{2^{j+1} R} \setminus  \tQ_{2^{j} R}) }.
\end{align*} 
Iterating the above estimate and using \eqref{11.7.1}, we prove the assertion.
\end{proof}

\begin{proof}[Proof of Theorem \ref{theorem 1.1} $(ii)$]
The uniqueness follows from Theorem \ref{theorem 1.1} $(i)$.
To prove the existence part, we follow the proof of Theorem 2.5 of \cite{DY_21}. Below we delineate the argument.

First, we consider the case $T = \infty$. By using the reverse H\"older inequality for the $A_p$-weights and the scaling property of the $A_p$-weights (see, for instance, Chapter 7 in \cite{G_14}), one can show that there exists a sufficiently large number $p_1  = p_1 (d, p, r_1, \ldots, r_d, q, K) \in (1, \infty)$ such that,
for any $h \in L_{p_1, \text{loc}} (\bR^{1+2d})$, one has
\begin{equation}
                \label{eq1.1}
    \|h\|_{ L_{p, r_1, \ldots, r_d, q} ( \tQ_R, w) }
    \le
    N R^{\kappa} \|h\|_{ L_{p_1} (  \tQ_R) },
\end{equation}
where $\kappa, N >0$ are independent of $R$ and $h$.
In addition, the above inequality also holds with  $\tQ_{2R}\setminus \tQ_R$ in place of  $\tQ_R$.

Next, let $\vec f_n$, $g_n, n \ge 1,$ be  sequences of $C^{\infty}_0 (\bR^{1+2d})$ functions converging to $\vec f$ and $g$ in $L_{p, r_1, \ldots, r_d, q} (\bR^{1+2d}, w)$, respectively.
Then, by Proposition \ref{proposition 11.6}, for any $n$, the equation
\begin{equation}
                \label{eq1.2}
    \cP u_n + \div (\overline{b} u_n) + b \cdot D_v u_n + (c+\lambda) u_n = \div \vec f_n + g_n
\end{equation}
has a unique solution $u_n \in \bS_{p_1} (\bR^{1+2d})$. Fix  any $n$ and let $R=R_n \ge 1$ be a constant such that
$\vec f_n$ and $g_n$ vanish outside $\tQ_R$.
Then, by \eqref{eq1.1} combined with Lemma \ref{proposition 11.7}, for any $j \in \{0, 1, 2, \ldots\}$,
\begin{align*}
     &   \|\lambda^{1/2} |u_n| + |D_v u_n|\|_{  L_{p, r_1, \ldots, r_d, q} (\tQ_{2^{j+1} R} \setminus \tQ_{2^{j} R}, w) }\\
&
    \le  N (2^{j} R)^{\kappa} \|\lambda^{1/2} |u_n| + |D_v u_n|\|_{ L_{p_1} (\tQ_{2^{j+1} R} \setminus \tQ_{2^{j} R}) }\\
 &
    \le N (2^{j} R)^{\kappa} 2^{-j(j-1)/4}R^{-j}
    \big(\|\vec f_n\|_{ L_{p_1} (\bR^{1+2d}) }+ \lambda^{-1/2} \|g_n\|_{ L_{p_1} (\bR^{1+2d})}\big).
 \end{align*}
The above inequality implies that $u_n \in \bS_{p, r_1, \ldots, r_d, q} (\bR^{1+2d}, w)$. Hence, by Theorem \ref{theorem 1.1} $(i)$,  $u_n, n \ge 1$
is a Cauchy sequence in $\bS_{p, r_1, \ldots, r_d, q} (\bR^{1+2d}, w)$ and has a limit $u$. Passing to the limit in \eqref{eq1.2}, we conclude the existence of the unique solution to Eq. \eqref{1.1}.

The  case $T < \infty$ is treated as in the proof of Theorem \ref{theorem 11.3} $(i)$ (see page \pageref{T finite}).
\end{proof}

\section{Proof of  Theorem \ref{theorem 1.10}}
In the next two lemmas, we prove energy identities for the operator
$$
	Y u : = \partial_t u  + \alpha (\sfv)  \cdot D_x u.
$$
For $T \in (-\infty, \infty]$,  let $H^1_2 (\bR^{1+d+d_1}_T)$ be the space of functions $u \in L_2 (\bR^{1+d+d_1}_T)$  such that $D_{\sfv} u \in L_2 (\bR^{1+d+d_1}_T)$ and let   $\langle\cdot, \cdot\rangle_T$ be the duality pairing between $\bH^{-1}_2 (\bR^{1+d+d_1}_T)$ and $H^1_2 (\bR^{1+d+d_1}_T)$ given by
\begin{equation}
			\label{eq5.1}
	\langle f, g \rangle_{T} = \int_{-\infty}^T \int_{\bR^{d }} [f (t, x, \cdot), g (t, x, \cdot)] \, dx dt,
\end{equation}
where
$$
	[f, g] = \int_{\bR^{d_1}} \big((1- \Delta_{\sfv})^{-1/2} f\big) \,  \big((1- \Delta_{\sfv})^{1/2} g\big) \, d{\sfv}.
$$

\begin{lemma}
			\label{lemma 7.1}
Let    $u \in H^1_2 (\bR^{1+d+d_1})$ be a function such that $Y u \in \bH^{-1}_2 (\bR^{1+d+d_1})$.
Then,
$$
	\langle Y u,   u \rangle_{\infty} = 0.
$$
\end{lemma}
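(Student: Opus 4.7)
The identity to be proved is the formal computation
\[
\langle Yu,u\rangle_\infty=\iiint(\partial_t u+\alpha(\sfv)\cdot D_x u)\,u\,dt\,dx\,d\sfv=\iiint\bigl[\partial_t(u^2/2)+\alpha(\sfv)\cdot D_x(u^2/2)\bigr]\,dt\,dx\,d\sfv=0,
\]
since the combination $u^2/2$ is differentiated only in $(t,x)$, so no $\sfv$-derivative (and no derivative of $\alpha$) ever appears, and the resulting expression is a total $(t,x)$-divergence that integrates to zero. The task is to make this rigorous in the $H^1_2$--$\bH^{-1}_2$ pairing defined via $(1-\Delta_\sfv)^{\pm1/2}$.

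\textbf{Step 1 (Mollification in $(t,x)$).} The key structural point is that $\alpha$ depends only on $\sfv$. Hence, if $\rho_\varepsilon=\rho_\varepsilon(t,x)$ is a standard mollifier in the $(t,x)$-variables and $u_\varepsilon:=u\ast_{t,x}\rho_\varepsilon$, then convolution commutes with $Y$ exactly,
\[
Yu_\varepsilon=(Yu)\ast_{t,x}\rho_\varepsilon,
\]
with no commutator. Writing $Yu=\text{div}_\sfv\vec F+G$ with $\vec F,G\in L_2$, the same representation holds for $Yu_\varepsilon$ with $\vec F,G$ replaced by their $(t,x)$-mollifications. Standard properties of mollification give $u_\varepsilon\to u$ in $H^1_2(\bR^{1+d+d_1})$ and $Yu_\varepsilon\to Yu$ in $\bH^{-1}_2(\bR^{1+d+d_1})$. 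Moreover, for each fixed $\varepsilon>0$, $\partial_t u_\varepsilon, D_xu_\varepsilon\in L_2$, so $Yu_\varepsilon\in L_2$.

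\textbf{Step 2 (Energy identity for the approximation).} For $f\in L_2(\bR^{d_1}_\sfv)$ and $g\in H^1_2(\bR^{d_1}_\sfv)$, self-adjointness of $(1-\Delta_\sfv)^{\pm1/2}$ and the identity $(1-\Delta_\sfv)^{-1/2}(1-\Delta_\sfv)^{1/2}=I$ give $[f,g]=\int fg\,d\sfv$. Applying this with $f=Yu_\varepsilon$ and $g=u_\varepsilon$ yields
\[
\langle Yu_\varepsilon,u_\varepsilon\rangle_\infty=\iiint\bigl[\partial_t(u_\varepsilon^2/2)+\alpha(\sfv)\cdot D_x(u_\varepsilon^2/2)\bigr]d\sfv\,dx\,dt,
\]
and the computation of the original pointwise identity is now completely legal because $u_\varepsilon$ is $C^\infty$ in $(t,x)$. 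By Fubini, for a.e.\ $(x,\sfv)$ one has $u_\varepsilon(\cdot,x,\sfv)\in H^1(\bR_t)\subset C_0(\bR_t)$ (by 1D Sobolev embedding), so $\int_\bR\partial_t(u_\varepsilon^2/2)\,dt=0$; for a.e.\ $(t,\sfv)$ one has $u_\varepsilon(t,\cdot,\sfv)\in H^1(\bR^d_x)$, hence $u_\varepsilon^2\in W^{1,1}(\bR^d_x)$ and $\int_{\bR^d}D_{x_i}(u_\varepsilon^2/2)\,dx=0$. Therefore $\langle Yu_\varepsilon,u_\varepsilon\rangle_\infty=0$.

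\textbf{Step 3 (Passage to the limit and comments on the obstruction).} By the convergences in Step 1 and continuity of the bilinear pairing $\langle\cdot,\cdot\rangle_\infty:\bH^{-1}_2\times H^1_2\to\bR$, one obtains $\langle Yu,u\rangle_\infty=\lim_{\varepsilon\to 0}\langle Yu_\varepsilon,u_\varepsilon\rangle_\infty=0$. The only potentially delicate point is that $\alpha$ is merely H\"older continuous under Assumption \ref{assumption 1.3.2}, which would normally obstruct a naive $\sfv$-mollification through a Friedrichs-type commutator. Restricting the mollification to the $(t,x)$-variables entirely avoids this, since $Y$ has coefficients constant in $(t,x)$. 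No further regularity of $\alpha$ is needed.
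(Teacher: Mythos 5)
Your idea of mollifying only in $(t,x)$ is a genuinely different and structurally cleaner route than the paper's: since $\alpha$ depends only on $\sfv$, such a mollification commutes exactly with $Y$, so the commutator $(Yu)_\varepsilon - Y u_\varepsilon$ that the paper controls in \eqref{eq7.1.2} via the H\"older continuity of $\alpha$ vanishes identically, and your Step~3 reduces the whole passage to the limit to bicontinuity of the pairing instead of the paper's four-term decomposition.

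However, there is a gap in Step~1's claim that ``$\partial_t u_\varepsilon, D_x u_\varepsilon \in L_2$, so $Y u_\varepsilon \in L_2$.'' This is a non-sequitur when $\alpha$ is unbounded, and the principal kinetic example $\alpha(\sfv)=\pm\sfv$ permitted by Assumption~\ref{assumption 1.3.2} is unbounded. Mollification in $(t,x)$ does not improve $\sfv$-regularity, so $Yu_\varepsilon = \text{div}_{\sfv}\vec F_\varepsilon + G_\varepsilon$ remains a priori only in $\bH^{-1}_2$, and the pointwise product $\alpha(\sfv)\cdot D_x u_\varepsilon(t,x,\sfv)$ need not lie in $L_2$ even though $D_x u_\varepsilon$ does. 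This breaks Step~2 in two places: the identity $[f,g]=\int fg\,d\sfv$ requires $f=Yu_\varepsilon(t,x,\cdot)\in L_2(\bR^{d_1})$, and the iterated integral $\iiint \alpha(\sfv)\cdot D_x(u_\varepsilon^2/2)\,d\sfv\,dx\,dt$ is not shown to be absolutely convergent, so neither the simplification of the pairing nor the Fubini step is justified. The gap is fixable: insert a smooth cutoff $\chi_R(\sfv)$ supported in $\{|\sfv|<2R\}$; since $\chi_R$ is constant in $(t,x)$ one has $Y(\chi_R u_\varepsilon)=\chi_R Y u_\varepsilon$, and on the support of $\chi_R$ the coefficient $\alpha$ is bounded, so $\chi_R Y u_\varepsilon \in L_2$ and your Step~2 gives $\langle \chi_R Y u_\varepsilon, \chi_R u_\varepsilon\rangle_\infty=0$. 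Letting $R\to\infty$, $\chi_R u_\varepsilon \to u_\varepsilon$ in $H^1_2$ and $\chi_R Y u_\varepsilon \to Yu_\varepsilon$ in $\bH^{-1}_2$, so bicontinuity of the pairing yields $\langle Yu_\varepsilon, u_\varepsilon\rangle_\infty=0$, after which your Step~3 finishes the proof. With that addition your argument is correct and, notably, uses only local boundedness of $\alpha$ rather than its H\"older continuity.
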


\begin{proof}
For a distribution $h$ on  $\bR^{1+d+d_1}$, a cutoff function $\eta \in C^{\infty}_0 (\bR^{1+d+d_1})$ with the unit integral, and $\varepsilon > 0$,  we denote
\begin{equation}
			\label{eq7.1.1}
	h_\varepsilon  (t, x, \sfv)  =  \varepsilon^{-(1+\theta d/2+d_1)} (h, \eta ((t-\cdot)/\varepsilon, (x - \cdot)/\varepsilon^{\theta/2}, (\sfv-\cdot)/\varepsilon),
\end{equation}
where $(h, \eta)$ is the action of $h$ on $\eta$.
For the sake of convenience, we omit $\bR^{1+d+d_1}$ in the notation of functional spaces and write $\langle\cdot,\cdot\rangle=\langle\cdot,\cdot\rangle_\infty$.
First, we split $\langle Y u,    u\rangle$ as follows:
\begin{align*}
\langle Y u,    u\rangle &= \langle Y u_{\varepsilon},  u_{\varepsilon} \rangle+ \langle Y u - (Y u)_{\varepsilon},   u\rangle\\
&\quad	+  \langle  (Y u)_{\varepsilon} - Y u_{\varepsilon},   u\rangle+ \langle  Y u_{\varepsilon},   u -  u_{\varepsilon} \rangle\\
&  =: I_1 + I_2 + I_3 + I_4.
 \end{align*}
Since $u_{\varepsilon}$ is a smooth function vanishing at infinity,  one has $I_1 = 0$. Furthermore,   by the properties of Bessel potential spaces (see, for example, Theorem  13.9.2 in \cite{Kr_08}),
$$
	|I_2| \le  \|Y u - (Y u)_{\varepsilon}\|_{  \mathbb{H}^{-1}_2  }   \|u\|_{  H^1_2 }   \to 0  \quad \text{as} \,\, \varepsilon \to 0.
$$
Next, note that
\begin{align*}
&\big((Y u)_{\varepsilon} - Y u_{\varepsilon}\big) (t, x, \sfv)\\
&
 = \varepsilon^{-\theta/2} \int (\alpha (\sfv -\varepsilon \sfv')  -  \alpha (\sfv)) \cdot D_x \eta (t', x', \sfv')
 u(t-\varepsilon t', x - \varepsilon^{\theta/2} x', \sfv -\varepsilon \sfv')    \, dx'd\sfv'dt',
\end{align*}
and, then, by the Minkowski inequality and Assumption \ref{assumption 1.3.2}, we have
\begin{equation}
			\label{eq7.1.2}
	\|(Y u)_{\varepsilon} - Y u_{\varepsilon}\|_{ L_2  } \le \varepsilon^{\theta/2}\|u\|_{ L_2  }  \to 0
\end{equation}
as $\varepsilon \to 0$, which gives
$$
	I_3 \to 0   \quad \text{as} \,\, \varepsilon \to 0.
$$

Next, by duality,
$$
	I_4 \le  \|Y u_{\varepsilon}\|_{  \mathbb{H}^{-1}_2 }  \| u -   u_{\varepsilon}  \|_{  H^1_2  }.
$$
By \eqref{eq7.1.2}, for sufficiently small $\varepsilon > 0$,  the first  factor on the right-hand side is bounded by
\begin{align*}
\|(Y u)_{\varepsilon}\|_{  \mathbb{H}^{-1}_2 }
+  \|(Y u)_{\varepsilon} - Y u_{\varepsilon}\|_{\mathbb{H}^{-1}_2 } 
\le \|Y u\|_{  \mathbb{H}^{-1}_2 } + \|u\|_{L_2}.
\end{align*}
Thus, by this and the fact that
$$
		 \| u -   u_{\varepsilon}  \|_{  H^1_2  }  \to 0    \quad \text{as} \,\, \varepsilon \to 0,
$$
we conclude that $I_4 \to 0$ as $\varepsilon \to 0$.
The lemma is proved.
\end{proof}

\begin{lemma}
			\label{lemma 7.2}
Let   $T \in \bR$,   $u \in H^1_2 (\bR^{1+d+d_1}_{  T  })$, and $Y u \in \bH^{-1}_2 (\bR^{1+d+d_1}_{  T  })$.
Then, for a.e. $s \in  (-\infty, T]$,
$$
	\langle Y u,   u \rangle_s =  (1/2)\|u (s, \cdot)\|^2_{  L_2 (\bR^{d+d_1})},
$$
where $\langle \cdot, \cdot \rangle_s $  is defined in \eqref{eq5.1}.
\end{lemma}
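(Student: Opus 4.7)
The plan is to reduce the identity to Lemma \ref{lemma 7.1} by multiplying $u$ by a smooth cutoff in $t$ that approximates $\mathbf{1}_{(-\infty,s)}$, and then letting the cutoff converge to the indicator. Fix a nonincreasing $\chi \in C^{\infty}(\bR)$ with $\chi(t) = 1$ for $t \le 0$ and $\chi(t) = 0$ for $t \ge 1$, and for $\delta > 0$ set $\chi_{\delta}(t) = \chi((t - s + \delta)/\delta)$, so that $\chi_{\delta} = 1$ for $t \le s - \delta$ and $\chi_{\delta} = 0$ for $t \ge s$. Extending by zero to $\{t \ge T\}$, the function $\tilde u_{\delta} := u\chi_{\delta}$ lies in $H^{1}_{2}(\bR^{1 + d + d_1})$, since $\chi_{\delta}(t) = 0$ for $t \ge s \ge T$ ensures there is no jump at the boundary. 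Moreover, because multiplication by a smooth bounded function of $t$ commutes with $\mathrm{div}_{\sfv}$,
\begin{equation*}
Y\tilde u_{\delta} = \chi_{\delta} Y u + u\chi'_{\delta} \in \bH^{-1}_{2}(\bR^{1 + d + d_1}),
\end{equation*}
with the first summand in $\bH^{-1}_{2}$ and the second in $L_{2}$.

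Applying Lemma \ref{lemma 7.1} then yields $\langle Y \tilde u_{\delta}, \tilde u_{\delta}\rangle_{\infty} = 0$. I would next expand the pairing using the fact that $\chi_{\delta}$, being scalar in $x$ and $\sfv$, commutes with $(1 - \Delta_{\sfv})^{\pm 1/2}$. The cross term collapses to $\langle Yu, \chi_{\delta}^{2} u\rangle_{T}$ (using $\mathrm{supp}\, \chi_{\delta} \subset (-\infty, s] \subset (-\infty, T]$), while the self-term reduces, via $[u, u](t, x) = \int u^{2}\, d\sfv$ for $u \in L_{2}$, to an integral in $t$ alone:
\begin{equation*}
\langle Yu, \chi_{\delta}^{2} u\rangle_{T} + \tfrac{1}{2} \int_{\bR} (\chi_{\delta}^{2})'(t)\, \|u(t, \cdot)\|^{2}_{L_{2}(\bR^{d + d_1})}\, dt = 0.
\end{equation*}

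The final step is to take $\delta \downarrow 0$. Dominated convergence gives $\chi_{\delta}^{2} u \to u\, \mathbf{1}_{(-\infty, s)}$ in $H^{1}_{2}(\bR^{1 + d + d_1}_{T})$; by the definition \eqref{eq5.1} of the pairing, this forces $\langle Yu, \chi_{\delta}^{2} u\rangle_{T} \to \langle Yu, u\rangle_{s}$. For the remaining term, $(\chi_{\delta}^{2})'$ is supported in $(s - \delta, s)$ with total mass $-1$, so it is a one-sided approximate identity at $s$; hence, at every Lebesgue point $s$ of the $L^{1}_{\mathrm{loc}}$ function $t \mapsto \|u(t, \cdot)\|^{2}_{L_{2}}$ (a set of full measure in $(-\infty, T]$) one has
\begin{equation*}
\int_{\bR} (\chi_{\delta}^{2})'(t)\, \|u(t, \cdot)\|^{2}_{L_{2}}\, dt \to -\|u(s, \cdot)\|^{2}_{L_{2}},
\end{equation*}
giving the claimed identity.

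The only delicate point I anticipate is the correct interpretation of $u(s, \cdot)$: the pointwise value is the representative provided by the $L_{2}(\bR^{d+d_1})$-valued Lebesgue differentiation theorem, and the approximate-identity convergence above is exactly what pins down $\|u(s,\cdot)\|_{L_2}^2$ from the scalar $L^1_{\mathrm{loc}}$ function $t \mapsto \|u(t,\cdot)\|^2_{L_2}$. The remaining verifications (that $\tilde u_{\delta}$ meets the hypotheses of Lemma \ref{lemma 7.1}, the commutation of $\chi_{\delta}$ with the Bessel potentials, and the dominated convergence used for the first limit) are routine.
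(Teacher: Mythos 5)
Your proof is correct and follows essentially the same route as the paper: a smooth time cutoff concentrating at $s$, an application of Lemma \ref{lemma 7.1} to the truncated function, and the Lebesgue differentiation theorem to recover $\|u(s,\cdot)\|^2_{L_2}$ from the one-sided approximate identity $(\chi_\delta^2)'$. (Minor typo: the condition justifying $\tilde u_\delta\in H^1_2(\bR^{1+d+d_1})$ should read $s\le T$, not $s\ge T$.)
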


\begin{proof}
We extend $u$ by $0$ for $t > T$.
Let $\xi$ be a smooth function on $\bR$ defined by
$$
    \begin{cases}
    \xi (t)  = 0,\quad \quad t \le 1,\\
     \xi (t) \in (0, 1), \, \, t \in (1, 2), \\
    \xi (t) = 1, \quad  \quad t \ge 2.
    \end{cases}
$$
We fix some $s \in  (-\infty, T]$
and denote
$\xi_{\varepsilon} (\cdot) = \xi ((s-\cdot)/\varepsilon)$,
$u_{\varepsilon} = u \xi_{\varepsilon}.$
It follows that $u_{\varepsilon} \in H^1_2 (\bR^{1+d+d_1})$ and $Y u_{\varepsilon} = \xi_{\varepsilon} (Y u) + u \xi_{\varepsilon}' \in \bH^{-1}_2 (\bR^{1+d+d_1})$.
Then,  by  Lemma \ref{lemma 7.1},
$$
	\langle Y u_{\varepsilon},  u_{\varepsilon}  \rangle_s  = 0,
$$
which gives
\begin{equation}
			\label{B.5.1}
 	 	\langle (Y u) \xi_{\varepsilon},  u_{\varepsilon}  \rangle_s  = - (1/2) \int_{\bR^{1+d+d_1}_s}  u^2 (\xi^2_{\varepsilon})' \, dx d\sfv dt.
\end{equation}
The integral on the left-hand side of \eqref{B.5.1} equals
$$
	 \int_{ \bR^{1+d}_s}  [(Y u) (t, x, \cdot),  u (t, x, \cdot) ] \xi^2_{\varepsilon} (t) \, dxdt \to
\int_{\bR^{1+d}_s} [(Y u) (t, x, \cdot), u (t, x, \cdot)]  \, dxdt
$$
as $\varepsilon \to 0$ by the dominated convergence theorem.

Note that $\int_{-\infty}^s (\xi^2_{\varepsilon})' \, dt = -1$.
Then, the right-hand side of \eqref{B.5.1}  is equal to
$$
 	\frac 1 2 \| u (s, \cdot, \cdot)\|^2_{  L_2 (\bR^{d+d_1})  }  -  \frac 1 2 \int_{ -\infty }^s  (\|  u (t, \cdot, \cdot)\|^2_{  L_2 (\bR^{d+d_1})} - \|  u (s, \cdot,\cdot)\|^2_{  L_2 (\bR^{d+d_1})})  (\xi^2_{\varepsilon} (t))' \, dt.
$$
The last term is bounded by
$$
	N \varepsilon^{-1}  \int_{ s-2\varepsilon}^{s-\varepsilon}  \big|\|  u (t, \cdot,\cdot)\|^2_{  L_2 (\bR^{d+d_1})} -  \| u (s, \cdot,\cdot)\|^2_{  L_2 (\bR^{d+d_1})  }\big|  \, dt.
$$
By the Lebesgue differentiation theorem, the above expression converges to $0$ as $\varepsilon \to 0$ for a.e. $s \in (-\infty, T]$.
\end{proof}

\begin{proof}[Proof of Theorem \ref{theorem 1.10}]
First, note that by Remark \ref{remark 2.2}, we only need to prove the assertion $(i)$.

$(i)$
 By pairing  both sides of Eq. \eqref{1.3.2} with $2 u$ and using Lemma \ref{lemma 7.2}, the Cauchy-Schwartz inequality, and Assumptions \ref{assumption 2.1} and  \ref{assumption 2.3},
for a.e.  $s \in (-\infty, T]$, we obtain
\begin{equation}
\begin{aligned}
			\label{eq7.3}
	&\|u (s, \cdot, \cdot)\|^2_{ L_2 (\bR^{d+d_1}) } + \delta \|D_{\sfv} u\|^2_{ L_2 (\bR^{1+d+d_1}_s) } + (\lambda - N_1) \|u\|^2_{ L_2 (\bR^{1+d+d_1}_s) } \\
& \le  N \|\vec f\|^2_{ L_2 (\bR^{1+d+d_1}_s) } + N \lambda^{-1} \|g\|^2_{ L_2 (\bR^{1+d+d_1}_s) },
\end{aligned}
\end{equation}
where $N_1 = N_1 (d, d_1, \delta , L)$
and $N = N (d, d_1, \delta)$.
Taking $\lambda \ge 2 N$, we may replace $\lambda - N$ with $\lambda/2$.
Finally, by this and the fact that  \eqref{eq7.3} holds for a.e. $s \in (-\infty, T]$, the desired estimate \eqref{eq1.10.1} is valid, which also implies the uniqueness part of the assertion $(i)$.

To prove the existence, due to the method of continuity and the a priori estimate \eqref{eq1.10.1}, we only need to prove that    $(Y  - \Delta_{\sfv} + \lambda) C^{\infty} (\bR^{1+d+d_1})$
is dense in $\bH^{-1}_2 (\bR^{1+d+d_1})$ for $\lambda > 0$.
Assume the opposite is true. Then, by duality, there exists a nonzero $u \in H^{1}_2 (\bR^{1+d+d_1})$ such that  the equality
$$
	- Y u  - \Delta_{   \sfv } u + \lambda u = 0
$$
holds in the sense of distributions.
 Mollifying the above equation with the mollifier defined in \eqref{eq7.1.1}  gives
$$
	- Y u_{\varepsilon}  - \Delta_{  \sfv } u_{\varepsilon}
+ \lambda u_{\varepsilon} =   (Y u)_{\varepsilon}-Y u_{\varepsilon}.
$$
Then,  replacing $t$ with $-t$ in the a priori estimate  \eqref{eq1.10.1} and using \eqref{eq7.1.2}, we get
$$
	\lambda^{1/2} \|u_{\varepsilon}\|_{ L_2 (\bR^{1+d+d_1}) }  \le N \varepsilon^{\theta/2} \|u\|_{ L_2 (\bR^{1+d+d_1}) }.
$$
Passing to the limit as $\varepsilon \to 0$ in the above inequality, we conclude $u \equiv 0$, which gives a contradiction. The theorem is proved.
\end{proof}

\appendix
\section{}
\begin{lemma}[Lemma A.1 in \cite{DY_21}]
			\label{lemma 4.7}
Let $\sigma > 0$,  $R > 0$, $p \ge 1$ be  numbers, and
$f \in L_{p, \text{loc} } (\bR^d)$. Denote
$$
	g ( x)
	=
	\int_{|y| > R^3 }
	f ( x+y)	|y|^{- (d + \sigma)}   \, dy.
$$
Then,
$$
   (|g|^p)^{1/p}_{  B_{R^3 }  }\leq N (d , \sigma) R^{- 3 \sigma}
	 \sum_{k = 0}^{\infty}
	 2^{- 3 k\sigma}
	(|f|^p)^{1/p}_{ B_{ (2^kR)^3 } }.
$$
\end{lemma}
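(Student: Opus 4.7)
The proof should be a direct dyadic decomposition argument, since the lemma is a standard tail estimate for a convolution-type operator and the right-hand side already suggests the correct scale. I would partition the integration region $\{|y| > R^3\}$ into the annuli
$$A_k = \{y \in \bR^d : (2^k R)^3 < |y| \le (2^{k+1} R)^3\}, \quad k \ge 0,$$
matching the radii $(2^k R)^3$ appearing on the right. On each $A_k$ the kernel satisfies $|y|^{-(d+\sigma)} \le (2^k R)^{-3(d+\sigma)}$, yielding the pointwise bound
$$|g(x)| \le \sum_{k=0}^\infty (2^k R)^{-3(d+\sigma)} \int_{A_k} |f(x+y)| \, dy.$$

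Next, I would take the $L_p(B_{R^3})$-norm of both sides and apply Minkowski's inequality in $k$ to pull the sum outside the norm. For each fixed $k$, H\"older's inequality in $y$ gives $\int_{A_k} |f(x+y)|\,dy \le |A_k|^{1-1/p} \bigl(\int_{A_k} |f(x+y)|^p\,dy\bigr)^{1/p}$, and then Fubini together with the inclusion
$$B_{R^3} + A_k \subset B_{(2^{k+2} R)^3}$$
(valid because $R^3 + (2^{k+1} R)^3 \le 2(2^{k+1}R)^3 \le (2^{k+2}R)^3$ for $k \ge 0$) yields
$$\Bigl(\int_{B_{R^3}} \int_{A_k} |f(x+y)|^p \, dy \, dx\Bigr)^{1/p} \le |B_{R^3}|^{1/p} \, \|f\|_{L_p(B_{(2^{k+2} R)^3})}.$$

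Dividing through by $|B_{R^3}|^{1/p}$ to convert to the average $(|g|^p)^{1/p}_{B_{R^3}}$ and collecting the volume factors $|A_k| \sim (2^{k+1} R)^{3d}$ and $|B_{(2^{k+2} R)^3}| \sim (2^{k+2} R)^{3d}$ against the kernel bound $(2^k R)^{-3(d+\sigma)}$, the powers of $R$ combine to $R^{-3\sigma}$ and the powers of $2^k$ combine to $2^{-3k\sigma}$, with the remaining constants depending only on $d$ and $\sigma$. A harmless index shift $k \mapsto k-2$, absorbed into the constant, then produces the stated inequality. I do not anticipate a real obstacle here: the argument is pure bookkeeping, and the only thing to get right is choosing the dyadic scale $(2^k R)^3$ rather than the naive $2^k R^3$, which is what makes the $R$- and $2^k$-exponents line up with the form of the claim.
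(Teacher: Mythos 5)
Your proof is correct, and it uses the standard dyadic annulus decomposition (at scales $(2^kR)^3$) combined with H\"older, Fubini, and the containment $B_{R^3}+A_k\subset B_{(2^{k+2}R)^3}$, which is essentially the same route as the cited Lemma A.1 of \cite{DY_21}. The bookkeeping checks out: the volume and kernel powers cancel to $R^{-3\sigma}2^{-3k\sigma}$ with a constant bounded uniformly in $p\ge 1$, and the index shift $k\mapsto k-2$ costs only a factor $2^{6\sigma}$, absorbed into $N(d,\sigma)$.
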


\begin{lemma}
            \label{lemma A.2}
Let $s \in (0, 1/2)$. Then, the following assertions  hold.

$(i)$ One has
\begin{equation}
    \label{eqA.2.1}
        D_x (-\Delta_x)^{-s} u (x) = N (d, s) \, \text{p.v.} \int u (x-y) \frac{y}{|y|^{d-2s+2}} \, dy, \, \, u \in \mathcal{S} (\bR^d).
\end{equation}
This formula also holds for  $u \in C^1_0 (\bR^d)$ (see Definition \ref{definition 1.1}).

$(ii)$ For any  $u \in C^2_0 (\bR^d)$,
\begin{equation}
    \label{eqA.2.2}
    \big(D_x (-\Delta_x)^{-s}\big) \big((-\Delta_x)^{s} u\big) \equiv D_x u.
\end{equation}
\end{lemma}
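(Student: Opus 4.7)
To prove (i), my plan is to start with $u \in \mathcal{S}(\bR^d)$ using the Riesz potential representation
\[
(-\Delta_x)^{-s} u(x) = C_{d,s} \int_{\bR^d} |y|^{2s-d} u(x-y) \, dy,
\]
which holds because $2s - d > -d$ makes the kernel locally integrable and $u$ decays at infinity. I will differentiate under the integral (justified by dominated convergence since $D_x u \in \mathcal{S}$) and then integrate by parts in $y$ on $\{|y| \geq \varepsilon\}$, yielding a bulk contribution
\[
C_{d,s}(2s-d) \int_{|y| \geq \varepsilon} u(x-y)\, \frac{y_j}{|y|^{d-2s+2}} \, dy
\]
together with a spherical boundary term of the form $\varepsilon^{2s-d}\int_{|y|=\varepsilon} u(x-y)\nu_j\,dS$ of nominal size $\varepsilon^{2s-1}$, which diverges when $s < 1/2$. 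The crucial observation is that writing $u(x-y) = u(x) + O(|y|)$ on this sphere, the $u(x)$ term vanishes by $\int_{|y|=\varepsilon}\nu_j\,dS = 0$, leaving an $O(\varepsilon^{2s})$ remainder that tends to zero. Passing $\varepsilon \to 0^+$ produces the principal value, establishing \eqref{eqA.2.1} for Schwartz $u$.

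To extend (i) to $u \in C^1_0(\bR^d)$, I will split the p.v.\ integral into the regions $|y| \leq 1$ and $|y| > 1$. On the near region, the odd symmetry of $y_j/|y|^{d-2s+2}$ lets me replace $u(x-y)$ by $u(x-y) - u(x)$; the pointwise bound $\|D u\|_{L_\infty}|y|$ then controls the singularity since $|y|^{-(d-2s)}$ is integrable near zero for $s>0$. On the far region, $|y|^{-(d-2s+1)}$ is integrable outside the unit ball precisely because $s < 1/2$, and $u$ is bounded. These bounds show that both sides of \eqref{eqA.2.1} are continuous in $u$ with respect to a topology for which Schwartz functions are dense in $C^1_0$ (truncation by a smooth bump plus mollification), so the identity passes to the limit.

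For part (ii), on $u \in \mathcal{S}(\bR^d)$ the identity \eqref{eqA.2.2} is immediate at the Fourier level: the composite symbol is $i\xi_j|\xi|^{-2s}|\xi|^{2s} = i\xi_j$. To extend to $u \in C^2_0(\bR^d)$, I will mollify $u^\varepsilon := u * \rho_\varepsilon$ in $x$; since convolution commutes with $D_x$ and with the Fourier multipliers $(-\Delta_x)^{\pm s}$, the pointwise formula \eqref{eq1.20} implies $(-\Delta_x)^s u^\varepsilon \to (-\Delta_x)^s u$ and $D_x (-\Delta_x)^s u^\varepsilon \to D_x (-\Delta_x)^s u$ locally uniformly, so $(-\Delta_x)^s u^\varepsilon$ converges in the $C^1_0$ topology used above. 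Applying (i) to $(-\Delta_x)^s u^\varepsilon$ and passing $\varepsilon \to 0^+$ then yields \eqref{eqA.2.2}.

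The main obstacle I expect is the boundary-term analysis in the integration by parts for (i): precisely in the range $s \in (0, 1/2)$ the surface contribution does not vanish by direct size bounds, and one must exploit the first-order spherical cancellation $\int \nu_j \, dS = 0$ to extract the correct formula. This is exactly what forces the principal-value interpretation in \eqref{eqA.2.1} rather than an absolutely convergent integral.
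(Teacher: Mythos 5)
Your proof of part (i) follows exactly the paper's route: start from the Riesz potential representation, differentiate under the integral, integrate by parts, and handle the singular kernel by a truncation $\{|y|>\varepsilon\}$. You are in fact more explicit than the paper about the delicate point — the paper silently discards the spherical boundary term, whereas you correctly note that its nominal size $\varepsilon^{2s-1}$ diverges for $s<1/2$ and that the cancellation $\int_{|y|=\varepsilon}\nu_j\,dS=0$ must be invoked, leaving an $O(\varepsilon^{2s})$ remainder. The extension to $C^1_0$ via the near/far splitting and odd symmetry is the same estimate as the paper's display \eqref{eqA.2.3}, and the concluding density argument (truncation plus mollification of a $C^1_0$ function gives a Schwartz approximation in $C^1$ norm) is the intended ``limiting argument.''

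For part (ii) there is a genuine, though easily fixable, gap. You propose to mollify, $u^\varepsilon = u*\rho_\varepsilon$, and then ``apply (i) to $(-\Delta_x)^s u^\varepsilon$.'' But applying (i) only produces the principal-value representation of $D_x(-\Delta_x)^{-s}$ acting on $(-\Delta_x)^s u^\varepsilon$; it does not by itself say that this equals $D_x u^\varepsilon$. To know that \eqref{eqA.2.2} holds for $u^\varepsilon$ you would need the Fourier-symbol computation, but $u^\varepsilon$ is not Schwartz: a $C^2_0$ function need not be integrable or rapidly decaying, and convolution with a mollifier does not improve the decay at infinity, so the Fourier argument is not directly available. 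The fix is the one you already used in part (i): approximate $u$ by Schwartz functions $u_n \to u$ in the $C^2$ norm (truncate by a smooth bump, then mollify). One then uses the bound established in (i) (the paper's \eqref{eqA.2.3}) together with \eqref{eq1.20} to get
\[
\bigl\| D_x(-\Delta_x)^{-s}\bigl((-\Delta_x)^s u_n\bigr) - D_x(-\Delta_x)^{-s}\bigl((-\Delta_x)^s u\bigr)\bigr\|_{L_\infty}
\le N \|u_n - u\|_{C^2(\bR^d)} \to 0,
\]
which is exactly the paper's estimate \eqref{eqA.2.4}, and passes \eqref{eqA.2.2} from $u_n$ to $u$. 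A secondary, cosmetic inconsistency: you first claim the convergences are only ``locally uniform'' and then assert convergence in the (global) $C^1_0$ topology; in fact the convergence is globally uniform since a $C_0$ function is uniformly continuous, so the stronger statement is the correct one to make.
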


\begin{proof}
It is well known that for any $u \in \mathcal{S} (\bR^d)$ (see, for example, Chapter 5 of \cite{S_70}),
$$
    (-\Delta_x)^{-s} u (x) = N_0 (d,s) \int u (x-y) \frac{1}{|y|^{d-2s}} \, dy.
$$
Differentiating under the integral's sign and integrating by parts, we obtain
\begin{align*}
 &     N_0^{-1} D_x  (-\Delta_x)^{-s} u (x) = \int D_x u (x-y) \frac{1}{|y|^{d-2s}} \, dy \\
&
   = - \lim_{\varepsilon \downarrow 0} \int_{|y| > \varepsilon} D_y u (x-y) \frac{1}{|y|^{d-2s}} \, dy
    = -(d-2s) \lim_{\varepsilon \downarrow 0} \int_{|y| > \varepsilon}  u (x-y) \frac{y}{|y|^{d-2s+2}} \, dy,
\end{align*}
which proves the first part of the assertion $(i)$.

Next, since $y |y|^{-d+2s-2}$ is an odd function,
we have
\begin{equation}
    \label{eqA.2.3}
\begin{aligned}
    &
    \bigg|\int_{|y| > \varepsilon}  u (x-y) \frac{y}{|y|^{d-2s+2}} \, dy\bigg|
  \le   \int_{|y| > 1} |u (x-y)| \frac{dy}{|y|^{d-2s+1}} \, dy\\
&
    + \int_{\varepsilon < |y| < 1} |u (x-y) - u (x)| \frac{y}{|y|^{d-2s+2}} \, dy
    \le N (d, s)\|u\|_{ C^1 (\bR^d) }.
\end{aligned}
\end{equation}
This bound combined with a limiting argument enables us to extend the formula \eqref{eqA.2.1} for $u \in C^1_0 (\bR^d)$.

$(ii)$ First,  for any $u \in C^{  2 }_0 (\bR^d)$, by  \eqref{eqA.2.3} and \eqref{eq1.20},
\begin{equation}
    \label{eqA.2.4}
\begin{aligned}
    &\|D_x (-\Delta_x)^{-s} \big((-\Delta_x)^{s} u\big)\|_{ L_{\infty} (\bR^d) } \\
    &\le  N (d, s) \|(-\Delta_x)^{s} u\|_{ C^1 (\bR^d) } \le N (d, s) \|u\|_{ C^{ 2} (\bR^d) },
\end{aligned}
\end{equation}
so that the left-hand side of \eqref{eqA.2.2} is well defined.
Furthermore, note that   \eqref{eqA.2.2}  holds if $u \in \mathcal{S} (\bR^d)$.
Then, the desired assertion follows from  \eqref{eqA.2.4} and a limiting argument.
\end{proof}

\begin{theorem}[Corollaries 3.2 and 3.5 of \cite{DY_21}]
            \label{theorem A.4}
Let  $c \geq 1$, $K \ge 1$, $p, q, r_1, \ldots, r_d > 1$
be  numbers,
$T \in (-\infty, \infty]$,
and $f \in L_{p, r_1, \ldots, r_d,q} (\bR^{1+2d}_T, w)$, where $w$ is  given by \eqref{eq2.2}, and $w_i, i  = 0, 1, \ldots, d$ satisfy \eqref{eq1.0}. Then, the following assertions hold.

$(i)$ (Hardy-Littlewood type theorem)
$$
	\| \bM_{c, T} f \|_{ L_{p,r_1, \ldots, r_d, q} (\bR^{1+2d}_{T}, w) }
	\leq N (d, p, q, r_1, \ldots, r_d, K) \| f \|_{ L_{p, r_1, \ldots, r_d, q} (\bR^{1+2d}_T, w) }.
$$

$(ii)$ (Fefferman-Stein type theorem)
$$
	\| f \|_{ L_{p , r_1, \ldots, r_d, q} (\bR^{1+2d}_T, w) }
	\leq N (d, p, q, r_1, \ldots, r_d, K)	
	\| f^{\#}_{c, T} \|_{  L_{p,  r_1, \ldots, r_d, q} (\bR^{1+2d}_T, w) }.
$$

$(iii)$ For $\alpha \in (-1, p-1)$, the above inequalities also hold in the space
$$
	L_{p; r_1, \ldots, r_d} (\bR^{1+2d}_T,  |x|^{\alpha} \prod_{i = 1}^d w_i (v_i))
$$
with
$
	N = N (d, p,  r_1, \ldots, r_d, K, \alpha).
$
\end{theorem}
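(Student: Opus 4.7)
The plan is to reduce the kinetic Hardy-Littlewood and Fefferman-Stein inequalities, which live on the anisotropic cylinders $Q_{r,cr}(z_1)$, to classical Muckenhoupt theory on a product structure. The key observation is that the Galilean map $\Psi_{z_1}(t,x,v) = (t - t_1,\, x - x_1 + (t - t_1) v_1,\, v - v_1)$ sends $Q_{r,cr}(z_1)$ to the product box $(-r^2, 0) \times B_{(cr)^3}(0) \times B_r(0)$, has unit Jacobian, and leaves the tensor-product weight $w(t,v) = w_0(t) \prod_{i=1}^d w_i(v_i)$ untouched, since $w$ is independent of $x$ and $\Psi_{z_1}$ only shifts $x$. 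Averages over kinetic cylinders thus become averages over product boxes after a $t$-dependent $x$-translation, reducing the underlying geometry to a product setting amenable to Fubini analysis.

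For part (i), I would bound $\bM_{c,T} f(z_0)$ pointwise by a composition of one-variable Hardy-Littlewood maximal operators $M_t$, $M_x$, $M_{v_1}, \ldots, M_{v_d}$ at the appropriate anisotropic scales ($r^2$ in $t$, $(cr)^3$ in $x$, $r$ in each $v_i$). Unwinding the mixed-norm $L_{p,r_1,\ldots,r_d,q}(w)$-norm via Fubini and applying the weighted Muckenhoupt maximal function theorem successively in each variable (using $[w_0]_{A_q} \le K$ in $t$, $[w_i]_{A_{r_i}} \le K$ in $v_i$, and the unweighted $L_p(\bR^d_x)$-boundedness of $M_x$) then yields the Hardy-Littlewood estimate with constant depending only on $d, p, q, r_1, \ldots, r_d, K$. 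Since $\bM_{c,T}$ is an uncentered maximal function truncated to $\bR^{1+2d}_T$, the truncation only improves the bound, and $c$ enters only through the dilation factor in the $x$-ball.

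For part (ii), the Fefferman-Stein inequality follows from a good-$\lambda$ inequality between $\bM_{1,T} f$ and $f^\#_T$, established via a Calder\'on-Zygmund stopping-time decomposition on the family $\{Q_r(z_1)\}$. The quasi-metric $\rho$ from \eqref{1.2} is non-symmetric but still admits a Vitali-type covering lemma, and Lebesgue measure is doubling on the kinetic cylinders; these are the two ingredients needed to run the standard argument. Combining the good-$\lambda$ estimate with part (i) and integrating the level-set bounds against the mixed-norm weight produces the desired inequality. Part (iii) is proved by the same strategy, with the inner $L_p$-norm now taken over $(t,x) \in \bR^{d+1}$ against the extra weight $|x|^\alpha$; the restriction $\alpha \in (-1, p-1)$ ensures $|x|^\alpha$ is a Muckenhoupt $A_p$-weight for the relevant $(t,x)$-maximal operator, after which the argument of (i) and (ii) transfers verbatim. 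The main obstacle is executing the good-$\lambda$ step on the non-symmetric kinetic geometry while preserving the tensor-product weighted structure; this is handled by the Galilean reduction above followed by coordinate-by-coordinate weighted maximal theorems, as carried out in Corollaries 3.2 and 3.5 of \cite{DY_21}.
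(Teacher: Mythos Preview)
The paper does not give its own proof of this statement: Theorem~A.4 is stated in the appendix as a citation of Corollaries~3.2 and~3.5 of \cite{DY_21}, with no argument reproduced. There is therefore no ``paper's own proof'' to compare against here.

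Your sketch is a reasonable outline of how such results are typically established, and indeed matches the general strategy of \cite{DY_21}: the Galilean change of variables straightens the kinetic cylinders into product boxes, the Hardy--Littlewood part is handled by iterating one-variable weighted maximal inequalities through the mixed norm, and the Fefferman--Stein part is obtained from a good-$\lambda$/stopping-time argument adapted to the quasi-metric $\rho$. One technical point worth tightening: in part~(iii) you say ``$\alpha \in (-1,p-1)$ ensures $|x|^\alpha$ is a Muckenhoupt $A_p$-weight for the relevant $(t,x)$-maximal operator,'' but on $\bR^d_x$ the standard $A_p$ range for $|x|^\alpha$ is $(-d,d(p-1))$ (cf.\ Remark~\ref{rem2.1}). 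The narrower interval $(-1,p-1)$ in the statement reflects the specific mixed-norm/product structure used in \cite{DY_21}, not the isotropic $A_p(\bR^d)$ condition; you should be careful to identify exactly which one-parameter $A_p$ condition is being invoked at that step rather than appealing to the $d$-dimensional one.
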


\begin{lemma}[Lemma A.2 in \cite{DY_21}]
            \label{lemma 9.7}
Let $p > 1, K \ge 1$ be numbers,
 $w \in A_p (\bR^d)$   be such that $[w]_{A_p (\bR^d)} \le K$,
and $f \in L_p (\bR^d, w)$.
Then, there exists a number $p_0    = p_0 (d, p, K)  > 1$ such that
$f \in L_{p_0, \text{loc}} (\bR^d)$.
\end{lemma}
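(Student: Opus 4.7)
The plan is to combine H\"older's inequality with the self-improving property of Muckenhoupt weights. Fix any ball $B \subset \bR^d$ and any $p_0 \in (1, p)$. Using H\"older's inequality with exponents $p/p_0$ and $p/(p-p_0)$ on the factorization $|f|^{p_0} = (|f|^{p_0} w^{p_0/p}) \cdot w^{-p_0/p}$ gives
\begin{equation*}
    \int_B |f|^{p_0} \, dx \le \Big(\int_B |f|^p w \, dx\Big)^{p_0/p} \Big(\int_B w^{-p_0/(p-p_0)} \, dx\Big)^{(p-p_0)/p}.
\end{equation*}
Since $\|f\|_{L_p(\bR^d, w)} < \infty$, it suffices to produce some $p_0 > 1$ depending only on $d$, $p$, $K$ for which $w^{-p_0/(p-p_0)}$ is locally integrable.

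For this I would invoke the classical reverse H\"older inequality for $A_p$ weights (see, e.g., Corollary 7.2.6 of \cite{G_14}). Because $w \in A_p$ with $[w]_{A_p} \le K$, the dual weight $w^{-1/(p-1)}$ belongs to $A_{p/(p-1)}$ with a bound depending only on $K$ and $p$, and consequently there exists $\delta = \delta(d,p,K) > 0$ together with a constant $N = N(d,p,K)$ such that
\begin{equation*}
    \Big(\fint_B w^{-(1+\delta)/(p-1)} \, dx\Big)^{1/(1+\delta)} \le N \fint_B w^{-1/(p-1)} \, dx
\end{equation*}
for every ball $B \subset \bR^d$. In particular, $w^{-(1+\delta)/(p-1)} \in L_{1,\text{loc}}(\bR^d)$.

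To finish, I would choose $p_0$ so that the exponent in the H\"older bound matches the exponent provided by reverse H\"older, i.e.\ solve $p_0/(p-p_0) = (1+\delta)/(p-1)$, which gives
\begin{equation*}
    p_0 = \frac{(1+\delta) p}{p + \delta}.
\end{equation*}
A direct computation shows $p_0 > 1$ (since $p > 1$ and $\delta > 0$) and $p_0 < p$. Substituting this $p_0$ into the H\"older estimate above yields a finite bound for $\int_B |f|^{p_0} \, dx$ on every ball $B$, which proves $f \in L_{p_0, \text{loc}}(\bR^d)$. The only nonelementary input is the self-improvement of $A_p$ weights, which is a standard result I would simply quote from \cite{G_14}; given that, the remaining steps are immediate.
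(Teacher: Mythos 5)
Your proof is correct and follows the standard route that the cited Lemma A.2 of \cite{DY_21} also takes: split $|f|^{p_0}$ via H\"older against $w$, and note that the dual weight $w^{-1/(p-1)} \in A_{p'}$ with $[w^{-1/(p-1)}]_{A_{p'}} = [w]_{A_p}^{1/(p-1)} \le K^{1/(p-1)}$ satisfies a reverse H\"older inequality with a gain $\delta = \delta(d,p,K)$, then choose $p_0 = (1+\delta)p/(p+\delta) \in (1,p)$ to match exponents. One small citation nitpick: in \cite{G_14}, the reverse H\"older inequality itself is Theorem 7.2.2, while Corollary 7.2.6 is the openness ($A_p \subset A_{p-\epsilon}$) that it implies; either can be used to produce the needed extra integrability of $w^{-1/(p-1)}$, but the step you actually invoke is the former.
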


\begin{lemma}
            \label{lemma A.5}
Let $p \in (1, \infty)$ and $u \in L_p (\bR^d)$ be a function such that  $(-\Delta)^{1/3}u \in L_p (\bR^d)$.
Then, for any $\varepsilon > 0$,
$$
    \|(-\Delta_x)^{1/6} u \|_{ L_p (\bR^d) }
    \le  N  \varepsilon \|(-\Delta_x)^{1/3} u\|_{ L_p (\bR^d) } + N \varepsilon^{-1} \|u\|_{ L_p (\bR^d) },
$$
where $N = N (d, p)$.
\end{lemma}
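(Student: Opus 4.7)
The plan is to realize $(-\Delta_x)^{1/6}$ as the Fourier multiplier with symbol $|\xi|^{1/3}$ (per the definition in the paper) and split that symbol into a low- and a high-frequency piece at the scale $|\xi|\sim \varepsilon^{-3}$. Concretely, fix a smooth radial $\phi$ with $\phi\equiv 1$ on $B_1$ and $\phi\equiv 0$ off $B_2$, and write
\begin{equation*}
  |\xi|^{1/3} \;=\; m_1(\xi)+m_2(\xi),\qquad m_1(\xi)=|\xi|^{1/3}\phi(\varepsilon^3\xi),\quad m_2(\xi)=|\xi|^{1/3}\bigl(1-\phi(\varepsilon^3\xi)\bigr).
\end{equation*}
The threshold $\varepsilon^{-3}$ is chosen so that $|\xi|^{1/3}\lesssim \varepsilon^{-1}$ on $\operatorname{supp} m_1$, while $|\xi|^{-1/3}\le \varepsilon$ on $\operatorname{supp} m_2$.

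For $m_1$, a direct computation using $|\partial^\alpha|\xi|^{1/3}|\le N_\alpha|\xi|^{1/3-|\alpha|}$ together with the compact support of $\phi(\varepsilon^3\cdot)$ gives, uniformly in $\varepsilon>0$,
\begin{equation*}
  |\xi|^{|\alpha|}|\partial^\alpha m_1(\xi)|\le N_\alpha\,\varepsilon^{-1}.
\end{equation*}
The H\"ormander--Mikhlin multiplier theorem then yields $\|T_{m_1}u\|_{L_p}\le N\varepsilon^{-1}\|u\|_{L_p}$.

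For $m_2$, factor $m_2(\xi)=M_\varepsilon(\xi)\,|\xi|^{2/3}$ with $M_\varepsilon(\xi)=|\xi|^{-1/3}\bigl(1-\phi(\varepsilon^3\xi)\bigr)$. Since on the support of $M_\varepsilon$ one has $|\xi|\ge \varepsilon^{-3}$, the same type of scaling argument gives
\begin{equation*}
  |\xi|^{|\alpha|}|\partial^\alpha M_\varepsilon(\xi)|\le N_\alpha\,\varepsilon
\end{equation*}
uniformly in $\varepsilon>0$. Hence H\"ormander--Mikhlin yields $\|T_{M_\varepsilon}w\|_{L_p}\le N\varepsilon\|w\|_{L_p}$ and, applied to $w=(-\Delta_x)^{1/3}u$, produces $\|T_{m_2}u\|_{L_p}\le N\varepsilon\|(-\Delta_x)^{1/3}u\|_{L_p}$. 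Adding the two pieces gives the stated inequality.

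The extension from Schwartz $u$ to the class $u,(-\Delta_x)^{1/3}u\in L_p$ is obtained by $x$-mollification and passage to the limit, using the distributional definition of $(-\Delta_x)^s$ recalled in the paper together with the pointwise bound \eqref{eq1.15}. The only technical point is verifying the Mikhlin hypotheses with constants independent of $\varepsilon$; this reduces, via the homogeneity of $|\xi|^s$ and the one-parameter dilation $\xi\mapsto\varepsilon^3\xi$, to bounds on fixed functions involving $\phi$, so no genuine obstacle arises.
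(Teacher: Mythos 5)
Your proof is correct but follows a genuinely different route from the paper's. The paper first establishes the $\varepsilon$-free inequality
$\|(-\Delta_x)^{1/6}u\|_{L_p}\le N\|(-\Delta_x)^{1/3}u\|_{L_p}+N\|u\|_{L_p}$
by passing through the Bessel potential scale: it compares $(-\Delta_x)^{1/6}$ with $(1-\Delta_x)^{1/6}$, uses monotonicity of the spaces $H^s_p$ to go up to $(1-\Delta_x)^{1/3}$, and then returns to the homogeneous operator, each step an application of H\"ormander--Mikhlin to a bounded ratio of symbols. The parameter $\varepsilon$ is then inserted at the end by the dilation $x\mapsto\varepsilon^{3}x$, under which $(-\Delta_x)^{1/6}$, $(-\Delta_x)^{1/3}$, and $\|\cdot\|_{L_p}$ scale by fixed powers of $\varepsilon$. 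You instead decompose the homogeneous symbol $|\xi|^{1/3}$ directly with a smooth cutoff at the scale $|\xi|\sim\varepsilon^{-3}$ and verify Mikhlin bounds with the correct $\varepsilon$-dependence for each piece, which makes the interpolation constant emerge without a separate scaling step. Both arguments are ultimately Mikhlin-based; the paper's is shorter because it offloads work onto the standard theory of Bessel potential spaces, while yours is more self-contained and makes the frequency-localization mechanism behind the interpolation explicit. Your uniform-in-$\varepsilon$ Mikhlin estimates are correct (the cross terms from the Leibniz rule are supported in the annulus $\varepsilon^{-3}\lesssim|\xi|\lesssim\varepsilon^{-3}$, where everything balances), and the approximation step at the end is standard, so there is no gap.
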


\begin{proof}
It follows from the  Hormander-Mikhlin multiplier theorem that $u \in H^{1/3}_p (\bR^d)$, where the latter is the Bessel potential space (see the definition, for example, in Chapter 13 of \cite{Kr_08}).
Then, by the Hormander-Mikhlin multiplier theorem and the properties of the Bessel potential space (see, for example, \cite{Kr_08}),
\begin{align*}
      \|(-\Delta_x)^{1/6} u\|_{ L_p (\bR^d) } &\le N
     \|(1-\Delta_x)^{1/6}u\|_{ L_p (\bR^d) } \le N  \|(1-\Delta_x)^{1/3} u\|_{ L_p (\bR^d) }\\
&
    \le N \|(-\Delta_x)^{1/3} u \|_{ L_p (\bR^d) }  + N \|u\|_{ L_p (\bR^d) }.
 \end{align*}
 Now the desired assertion follows from the scaling argument.
\end{proof}

\begin{lemma}[Lemma 7.2 in \cite{DY_21}]
			\label{lemma 9.10}
Let $\gamma_0 > 0$ be a number and $R_0$ be the constant  in Assumption \ref{assumption 2.2} $(\gamma_0)$. Let $r \in (0, R_0/2)$, $c > 0$ be numbers.
 Then, one has
$$
	I: = \fint_{  Q_{r, c r}  } |a (t, x, v) - (a (t, \cdot, \cdot))_{  B_{r^3} \times B_r  }| \, dz  \le N (d) c^3 \gamma_0.
$$
\end{lemma}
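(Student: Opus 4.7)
The plan is to prove this by combining a covering of the enlarged domain $D_2:=B_{(cr)^3}\times B_r$ by translates of the base domain $D_1:=B_{r^3}\times B_r$ with a chaining argument that controls how the local average of $a$ drifts from $D_1$ to each translate. I shall focus on the case $c\ge 1$ (which is what is used in the paper, e.g.\ with $c=2^{k+1}$ in Lemma \ref{lemma 11.1}); when $c\le 1$ the bound follows from $D_2\subset D_1$ and a direct triangle-inequality estimate against Assumption \ref{assumption 2.2} at $z_0=0$. To begin, bound
$$I\le \fint_{-r^2}^0 \fint_{D_2}\fint_{D_1} |a(t,x,v)-a(t,x',v')|\,dx'dv'\,dx\,dv\,dt,$$
and cover $D_2$ by a family $D^{(i)}:=(y_i+B_{r^3})\times B_r$, $i=1,\ldots,K$, of bounded overlap, with $K\le N(d)c^{3d}$ and $\sum_i|D^{(i)}|\le N(d)|D_2|$. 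Each $D^{(i)}$ coincides with $D_r(z_0^{(i)},\cdot)$ for $z_0^{(i)}:=(0,y_i,0)$, and since $r<R_0$, Assumption \ref{assumption 2.2} applied at $z_0^{(i)}$ gives $\fint_{-r^2}^0 \fint_{D^{(i)}}|a(t,\cdot,\cdot)-(a(t,\cdot,\cdot))_{D^{(i)}}|\,dxdv\,dt\le \gamma_0$.

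For $(x,v)\in D^{(i)}$, use the triangle inequality
$$|a(t,x,v)-(a(t,\cdot,\cdot))_{D_1}|\le |a(t,x,v)-(a(t,\cdot,\cdot))_{D^{(i)}}|+|(a(t,\cdot,\cdot))_{D^{(i)}}-(a(t,\cdot,\cdot))_{D_1}|.$$
Summing the first summand over the covering, dividing by $|D_2|$, and using $\sum_i|D^{(i)}|\le N(d)|D_2|$ together with the above oscillation bound, gives a contribution of at most $N(d)\gamma_0$ to $I$. The crux is then the second summand, which requires comparing the averages of $a(t,\cdot,\cdot)$ on two domains whose centers in the $x$-coordinate may be as far apart as $(cr)^3=c^3r^3$.

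For each $i$, connect $0$ to $y_i$ by a chain $y_0=0,y_1,\ldots,y_M=y_i$ along the line segment, with $|y_{j+1}-y_j|\le r^3/2$ and $M\le 2c^3$. Setting $E^{(j)}:=(y_j+B_{r^3})\times B_r$, the overlap $|E^{(j)}\cap E^{(j+1)}|\ge c_0(d)\,r^{4d}$ is comparable to $|E^{(j)}|$. The standard estimate
$$|(a)_{E^{(j)},t}-(a)_{E^{(j+1)},t}|\le \frac{1}{|E^{(j)}\cap E^{(j+1)}|}\Bigl(\int_{E^{(j)}}|a-(a)_{E^{(j)},t}|+\int_{E^{(j+1)}}|a-(a)_{E^{(j+1)},t}|\Bigr),$$
combined with Assumption \ref{assumption 2.2} at each $z_0^{(j)}:=(0,y_j,0)$ after integration in $t$, yields $\int_{-r^2}^0|(a)_{E^{(j)},t}-(a)_{E^{(j+1)},t}|\,dt\le N(d)\gamma_0 r^2$. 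Telescoping along the chain gives $\int_{-r^2}^0|(a)_{D^{(i)},t}-(a)_{D_1,t}|\,dt\le N(d)\,M\,\gamma_0 r^2\le N(d)c^3\gamma_0 r^2$, so the second-summand contribution to $I$ is at most
$$\frac{1}{|D_2|r^2}\sum_i |D^{(i)}|\int_{-r^2}^0|(a)_{D^{(i)},t}-(a)_{D_1,t}|\,dt\le N(d)c^3\gamma_0.$$

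The main obstacle is the chaining step: the advertised exponent on $c$ is $3$, not $3d$, and this linear-in-$c^3$ count is possible only because one need traverse a one-dimensional path of length $\le c^3 r^3$ from $0$ to $y_i$ rather than filling out the $d$-dimensional ball $B_{(cr)^3}$ volumetrically. Verifying that consecutive chain elements $E^{(j)}, E^{(j+1)}$ share a $v$-slab of full measure and an $x$-overlap of comparable measure $\gtrsim r^{3d}$, and that each intermediate Assumption \ref{assumption 2.2} application is legitimate (thanks to $r<R_0$), is what makes the $c^3$ bound work; combining the two contributions yields $I\le N(d)c^3\gamma_0$.
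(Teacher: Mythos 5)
The paper does not reproduce the proof of this lemma (it cites Lemma 7.2 of \cite{DY_21}), so a line-by-line comparison with the source is not possible; I therefore assess your argument on its own.

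For $c\ge 1$ your proof is correct, and the two ingredients are exactly the right ones: a covering of $D_2=B_{(cr)^3}\times B_r$ by $O(c^{3d})$ translates of $D_1=B_{r^3}\times B_r$ with bounded overlap (which handles the ``local'' oscillation term), together with a one-dimensional chain of length $O(c^3)$ from $0$ to each $y_i$ to control the drift of the slice averages $(a(t,\cdot,\cdot))_{D^{(i)}}$ from $(a(t,\cdot,\cdot))_{D_1}$. The chain is what produces the exponent $c^3$ rather than the naive $c^{3d}$. The intermediate steps all check: each $E^{(j)}=(y_j+B_{r^3})\times B_r$ equals $D_r((0,y_j,0),t)$ (the $v_0$-shear vanishes), so Assumption \ref{assumption 2.2} gives $\int_{-r^2}^0\fint_{E^{(j)}}|a-(a)_{E^{(j)},t}|\,dt\le \gamma_0 r^2$; consecutive chain domains overlap in measure $\gtrsim r^{4d}\approx|E^{(j)}|$, so each comparison of averages costs $N(d)\gamma_0 r^2$; and the final weighting by $|D^{(i)}|/|D_2|$ with $\sum_i|D^{(i)}|\le N(d)|D_2|$ gives $N(d)c^3\gamma_0$. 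You only use $r<R_0$, slightly weaker than the stated $r<R_0/2$.

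Your parenthetical handling of $c\le 1$ is, however, not correct. When $c<1$ one has $D_2\subsetneq D_1$, and the containment/triangle-inequality argument gives only $\fint_{D_2}|a-(a)_{D_1}|\le \frac{|D_1|}{|D_2|}\fint_{D_1}|a-(a)_{D_1}|$, which is of order $c^{-3d}\gamma_0$, not $c^3\gamma_0$. In fact the stated bound cannot hold for small $c$: take $a^{ij}=\delta_{ij}+\epsilon\, 1_{\{x_1>0\}}\delta_{ij}$ with $\epsilon$ small, so Assumption \ref{assumption 2.2} holds with $\gamma_0\sim\epsilon$, yet $I\sim\epsilon$ for all $c>0$ while $N(d)c^3\gamma_0\to 0$. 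The lemma is meaningful for $c\ge 1$ (equivalently with $\max(1,c)^3$ on the right), which is how the paper uses it ($c=2^{k+1}$ in Lemma \ref{lemma 11.1}); your claim for $c\le 1$ should simply be deleted.
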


\begin{lemma}
            \label{lemma A.7}
Let $p \in (1, \infty), \alpha \in (-d, d (p-1))$ be numbers, $u \in L_p (\bR^d, |x|^{\alpha})$, and $\xi$ be a  measurable function satisfying the bound
$$
   |\xi (y)| \le N_0 (1+|y|)^{-d-\beta}, y \in \bR^d,
$$
for some $\beta > 0$. Let $\xi_{\varepsilon} = \varepsilon^{-d}\xi(\cdot/\varepsilon)$.
Then, $u \ast \xi_{\varepsilon}\ \in L_p (\bR^d, |x|^{\alpha})$, and
\begin{equation}
            \label{eq A.7.1}
    \|u \ast \xi_\varepsilon\|_{L_p (\bR^d, |x|^{\alpha})} \le N (d, p, \alpha, \beta, N_0) \|u\|_{L_p (\bR^d, |x|^{\alpha})}.
\end{equation}
Furthermore, if we assume, additionally, that $\xi \in C^{\infty}_0 (\bR^d)$ is a function with the unit integral, then, $u \ast \xi_{\varepsilon} \to 0$ in $L_p (\bR^d, |x|^{\alpha})$.
\end{lemma}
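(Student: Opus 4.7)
The plan is to reduce the bound \eqref{eq A.7.1} to a pointwise majorization by the Hardy--Littlewood maximal function and then apply the weighted maximal inequality for $A_p$ weights. Since $\beta > 0$, the radial majorant $\psi(y) := N_0(1+|y|)^{-d-\beta}$ is decreasing and integrable on $\bR^d$, and by hypothesis $|\xi_\varepsilon(y)| \le \psi_\varepsilon(y)$ pointwise for every $\varepsilon > 0$. The classical bound for convolution with an approximate identity possessing a radial decreasing $L_1$-majorant (see, e.g., Chapter III of \cite{S_70}) then yields
$$
|(u \ast \xi_\varepsilon)(x)| \le (|u| \ast \psi_\varepsilon)(x) \le \|\psi\|_{L_1(\bR^d)}\, Mu(x)
$$
uniformly in $\varepsilon > 0$, where $Mu$ denotes the Hardy--Littlewood maximal function of $u$.

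By Remark \ref{rem2.1}, the condition $\alpha \in (-d, d(p-1))$ places $|x|^\alpha$ in $A_p(\bR^d)$, so Muckenhoupt's theorem yields
$$
\|Mu\|_{L_p(\bR^d, |x|^\alpha)} \le N(d, p, \alpha)\|u\|_{L_p(\bR^d, |x|^\alpha)}.
$$
Combining the two displays gives \eqref{eq A.7.1} with $N = N(d, p, \alpha, \beta, N_0)$, as required.

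For the convergence assertion (which should be read as $u - u \ast \xi_\varepsilon \to 0$ in $L_p(\bR^d, |x|^\alpha)$ as $\varepsilon \to 0$), the plan is to argue by density. For $u \in C^\infty_0(\bR^d)$ the convolution $u \ast \xi_\varepsilon$ converges to $u$ uniformly, and the supports of the differences $u - u \ast \xi_\varepsilon$ remain in a fixed compact set for $\varepsilon \le 1$; since $\alpha > -d$ makes $|x|^\alpha$ locally integrable on $\bR^d$, dominated convergence gives convergence in $L_p(\bR^d, |x|^\alpha)$. The general case then follows by a standard three-$\varepsilon$ argument combining this special case, the uniform bound \eqref{eq A.7.1}, and the density of $C^\infty_0(\bR^d)$ in $L_p(\bR^d, |x|^\alpha)$, which is a routine consequence of $|x|^\alpha \in A_p(\bR^d)$.

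No step presents a genuine obstacle: the three ingredients---pointwise domination by the maximal function, the weighted Muckenhoupt theorem, and smooth density in $A_p$-weighted Lebesgue spaces---are all classical. The only mildly technical point is confirming the pointwise maximal function bound for $\xi$ that is merely measurable; this is handled by splitting the convolution $(|u| \ast \psi_\varepsilon)(x)$ over dyadic annuli $\{|y - x| \sim 2^k \varepsilon\}$, on each of which $\psi_\varepsilon$ is essentially constant, and summing the resulting geometric series in $k$ using $\beta > 0$.
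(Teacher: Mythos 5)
Your argument for the uniform bound \eqref{eq A.7.1} is essentially the paper's: you invoke the pointwise domination $|u\ast\xi_\varepsilon|\le N\,Mu$ (citing Stein's radial--decreasing--majorant lemma, with the dyadic decomposition noted as the underlying mechanism), and the paper carries out exactly that dyadic splitting over annuli $2^k<|y|<2^{k+1}$ before appealing to the weighted maximal inequality from \cite{AM_84}. Same idea, same ingredients.

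For the convergence assertion the two proofs genuinely diverge. You proceed by a density argument: handle $u\in C^\infty_0(\bR^d)$ directly (uniform convergence on a fixed compact set plus local integrability of $|x|^\alpha$, using $\alpha>-d$), then transfer to general $u$ via the uniform operator bound \eqref{eq A.7.1} and density of $C^\infty_0$ in $L_p(\bR^d,|x|^\alpha)$. The paper instead first establishes pointwise a.e. convergence $u\ast\xi_\varepsilon\to u$ by observing that Lemma \ref{lemma 9.7} places $u$ in $L_{p_0,\text{loc}}\subset L_{1,\text{loc}}$, so the Lebesgue differentiation theorem applies, and then concludes by dominated convergence, the dominating function being $(N\,Mu+|u|)^p|x|^\alpha\in L_1$ thanks to the maximal bound. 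Both routes are correct and comparably short; your density route is more self-contained in that it avoids invoking the local higher-integrability Lemma \ref{lemma 9.7}, while the paper's a.e.-convergence-plus-DCT route avoids having to verify density of $C^\infty_0$ in the weighted space (which you correctly note is itself a routine consequence of $|x|^\alpha\in A_p$). Either is acceptable.
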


\begin{proof}
Note that for any $x \in \bR^d$,
\begin{align}
& |u \ast \xi_\varepsilon (x)| \le  N \fint_{|y| < 1} |u (x-\varepsilon y)| \, dy \notag\\
&+ N  \sum_{k=0}^{\infty} 2^{-\beta k} \fint_{ 2^k < |y| < 2^{k+1} } |u(x-\varepsilon y)| \, dy
    \le     N M u (x),\label{eq12.18}
\end{align}
where $N =N (d, N_0, \beta)$,
and $M$ is the usual Hardy-Littlewood maximal function.
Since $|x|^{\alpha}, \alpha \in (-d, d (p-1))$ is an $A_p (\bR^d)$ weight (see Remark \ref{rem2.1}), \eqref{eq A.7.1} follows from a version of the Hardy-Littlewood maximal inequality in weighted Lebesgue spaces (see \cite{AM_84}).

To prove the second assertion, we note that $u \ast \xi_{\varepsilon}$ converges to $u$ as $\varepsilon \to 0$ a.e. due to Lemma \ref{lemma 9.7} and the Lebesgue differentiation theorem. Now the claim follows from \eqref{eq12.18} and the dominated convergence theorem.
\end{proof}

\end{document}